\documentclass[11pt, a4paper]{article}

\usepackage{amsmath}
\usepackage{amsfonts}
\usepackage{amssymb}
\usepackage[english]{babel}
\usepackage{xcolor}
\usepackage{euscript}
\usepackage{eufrak}
\usepackage{hyperref}
\usepackage{graphicx}

\usepackage{dsfont}

\setlength{\topmargin}{-0.5cm} \setlength{\textheight}{24cm}
\setlength{\textwidth}{15.5cm} \setlength{\oddsidemargin}{0.5cm}

\def\english{\selectlanguage{english}}

\providecommand\mathbb{\bf}
\newcommand\R{{\mathbb R}}

\newcommand\Sp{{\mathbb S}}

\newtheorem{thm}{Theorem}[section]
\newtheorem{lemma}{Lemma}[section]
\newtheorem{pro}{Proposition}[section]
\newtheorem{defi}{Definition}[section]
\newtheorem{coro}{Corollary}[section]
\newtheorem{remark}{Remark}[section]

\newcounter{Remark}

\renewcommand\theRemark{\arabic{Remark}}

\newcounter{steps}
\newenvironment{proof}[1][]{%
\par\medbreak\setcounter{steps}{0}
{\noindent\bfseries Proof#1. }} {\hfill\fbox{\ }\medbreak}

\newcounter{substeps}[steps]


\newcommand{\rot}[0]{
\mathrm{rot}}

\newcommand{\cur}[0]{
\mathrm{curl_x}}

\newcommand{\calE}[0]{
\mathcal{E}}

\newcommand{\calV}[0]{
\mathcal{V}}

\newcommand{\intvt}[1]{
\int _{\R ^3} \!#1 \;\mathrm{d}v}

\newcommand{\Divx}[0]{
\mathrm{div}_x}

\newcommand{\Divv}[0]{
\mathrm{div}_v}

\newcommand{\eps}[0]{
\varepsilon}






\begin{document}
\english

\title{Long time behavior of the Vlasov-Maxwell-Fokker-Planck system with a strong external magnetic field}

\author{
Anh-Tuan VU \thanks{Faculty of Mathematics and Informations, Hanoi University of Science and Technology, 1 Dai Co Viet, Hai Ba Trung, Hanoi, Vietnam. E-mail : {\tt tuan.vuanh1@hust.edu.vn}}
}

\date{(\today)}

\maketitle

\begin{abstract}
The subject matter of this paper concerns the magnetic confinement of plasma. We investigate the long time behavior of the Vlasov-Maxwell-Fokker-Planck system under the effect of a strong external magnetic field. We first formally derive an asymptotically reduced model. In three space dimensions, a constraint occurs along the parallel direction to the magnetic field. To eliminate the corresponding Lagrange multiplier, we then study the Vlasov-Maxwell-Fokker-Planck system in two spatial dimensions and three velocity dimensions $(2d\times 3d)$. In this setting, we show that the obtained asymptotic model defines a well-posed dynamical system, and  we provide a rigorous mathematical justification of the formally derived system. Our proofs rely on the modulated energy method.
\end{abstract}

\paragraph{Keywords:} Long time behavior, Vlasov-Maxwell equation, Fokker-Planck operator, Strongly magnetized plasmas, Modulated energy method.

\paragraph{AMS classification:} 35Q75, 78A35, 82D10.
\\
\\

\section{Introduction}
\label{Intro}
A plasma is a gas that is significantly ionized (through heating or photoionization), consisting of electrons and ions. One of the main applications in plasma physics concerns the energy production through thermo-nuclear fusion. The controlled fusion is achieved by magnetic confinement, where the plasma is confined into toroidal devices called Tokamaks, under the action of strong magnetic fields $\textbf{B}$. This confinement occurs because the radius of the circular motion of charged particles around the magnetic filed lines, known as the Larmor radius $r_L$, is proportional to the inverse of the strength $|\textbf{B}|$ of the magnetic field, i.e., $r_L = mv/|q\textbf{B}|$. Here, $m$ is the particle mass, $q$ is the particle charge and $v$ is the velocity in the plane perpendicular to the magnetic field lines. In Tokamaks, the plasma is heated to extremely high temperatures. As the collision frequency decreases with increasing temperature, fusion plasmas enter a weakly collisional regime. The mathematical modelling useful of such plasma confinement is based on a kinetic framework, which are mesoscopic descriptions for the electrons and ions, coupled with Poisson’s equation or Maxwell’s equation for the computation of the electrostatic or electromagnetic fields, respectively.

In this paper, we study a class of kinetic-type models for fusion plasmas, typically represented by Vlasov-type equations that include self-consistent electromagnetic forces in strongly confined tokamak plasma, as well as in weakly collisional plasma. Direct simulations of the kinetic system of Vlasov equations are fairly expensive from a numerical point of view due to the high dimensionality of the phase space. Additionally, the presence of strong magnetic fields introduces a small time scale, namely the rotation period of particles around the magnetic lines (known as the cyclotronic period $T_c = 2\pi m/|q\textbf{B}|$). This requires  very small time steps for the numerical resolutions. In most industrial applications, it is necessary to reduce the dimensionality of the model, eliminating the unnecessary fast dynamics while retaining the complete low-frequency physics. Therefore, deriving an approximate model that is numerically less expensive is crucial. Large magnetic fields usually lead to the so-called gyro-kinetic or drift-kinetic limits (see \cite{LeeGyro1983, LittHam1981} for physics references and \cite{BosAsyAna, Bre2000} for mathematical results), which consists in the asymptotic behavior of the charged particles dynamics under slowly varying magnetic fields on the typical gyroradius length.

Here, we first derive a new asymptotic model under the assumptions of a strong magnetic field and the large-time asymptotic limit for the three dimensional Vlasov-Maxwell-Fokker-Planck system.  The convergence to this asymptotic model will be rigorously verified for the Vlasov-Maxwell-Fokker-Planck system in two spatial dimensions and three velocity dimensions $(2d\times3d)$.  An analogous problem for the Vlasov-Poisson-Fokker-Planck system has already been carefully studied by the author in recent works. This extension is interesting both from the viewpoint of physics: we are dealing with a more realistic and complete model; and those of mathematics: replacing the Poisson equation by the Maxwell system
we cannot expect too much regularizing effects from the coupling. Let us summarize the previously known results on this topic. In the case of uniform constant magnetic fields, F. Golse and L. Saint-Raymond have  carefully studied this problem for the Vlasov-Poisson system in two dimensions \cite{GolSaintMag1999, Saint2002} (obtained when one restricted to the perpendicular dynamics). In \cite{FilXiSon}, the authors formally derived an asymptotic model from the two dimensional in space and
three dimensional in velocity $(2d\times 3d)$ Vlasov-Maxwell system. For non-constant magnetic fields (i.e., magnetic fields with constant direction but varying amplitude), P. Degond and F. Filbet, in \cite{DegFil16} have formally derived the asymptotic limit of the three dimensional Vlasov-Poisson system. Recently, in \cite{BosTuan}, M. Bostan and A-T. Vu have been provided a rigorous mathematical justification of the obtained asymptotic model for the Vlasov-Poisson-Fokker-Planck system in two dimensions.


\subsection{The physical model}

We consider a one-species plasma, electron for instance, interacting both through the action of the self-consistent electromagnetic field and through collisions, in the presence of an external magnetic field. Let $\tilde{f}=\tilde{f}(\tilde{t},\tilde{x},\tilde{v})$ be the density distribution function of charged particles of mass $m$, charge $q$, depending on time $\tilde{t}\in\R_+$, position $\tilde{x}\in\R^3$, and velocity $\tilde{v}\in\R^3$. The evolution of the plasma is governed by the non-relativistic Vlasov equation  with Fokker-Planck collision operator, and with external magnetic field given by:
\begin{equation}
\label{Non_scale_VMFP}
\dfrac{\partial \tilde{f}}{\partial \tilde{t}} + \tilde{v}\cdot\nabla_{\tilde{x}} \tilde{f} + \dfrac{q}{m}\left( \tilde{E} + \tilde{v}\wedge (\tilde{B} +\tilde{\textbf{B}}_{\text{ext}}) \right)\cdot\nabla_{\tilde{v}} \tilde{f} = \mathcal{Q}(\tilde{f}),\,\, (\tilde{t},\tilde{x},\tilde{v})\in\R_+\times\R^3\times\R^3.
\end{equation}
The self-consistent electromagnetic field $(\tilde{E},\tilde{B})$ satisfies the classical Maxwell system
\begin{equation}
\label{Non_scale_Max1}
\left\{\begin{array}{l}
\mu_0 \epsilon_0 \dfrac{\partial \tilde{E}}{\partial \tilde{t}}  = \mathrm{curl}_{\tilde{x}} \tilde{B} - \mu_0 \tilde{J},\\
\dfrac{\partial \tilde{B}}{\partial \tilde{t}} = - \mathrm{curl}_{\tilde{x}} \tilde{E},
\end{array}\right.
\end{equation}
where $\mu_0$ is the vacuum permeability, and $\epsilon_0$ is the electric permittivity of the vacuum. The electric current is given by
\begin{equation}
\label{Non_scale_curr}
\tilde{J}(\tilde{t},\tilde{x}) = q \tilde{j}(\tilde{t},\tilde{x}) = q \int_{\R^3} \tilde{v} \tilde{f}(\tilde{t},\tilde{x},\tilde{v})\mathrm{d}\tilde{v}.
\end{equation}
Moreover, Maxwell system's is supplemented by Gauss law’s
\begin{equation}
\label{Non_scale_Max2}
\epsilon_0 \mathrm{div}_{\tilde{x}} \tilde{E} = \tilde{\rho},\quad  \mathrm{div}_{\tilde{x}} \tilde{B} =0,
\end{equation}
where $\tilde{\rho}$ is the charge density:
\begin{equation}
\label{Non_scale_charge}
\tilde{\rho}  = q (\tilde{n}- \tilde{D}) = q \left(\int_{\R^3} \tilde{f}(\tilde{t},\tilde{x},\tilde{v})\mathrm{d}\tilde{v} - \tilde{D}(\tilde{x})\right),
\end{equation}
and $\tilde{D}=\tilde{D}(\tilde{x})$ is the density of a background of positive charges, and is assumed to be given. 
The external magnetic field $\tilde{\textbf{B}}_{\text{ext}}$ is of the form
\[
\tilde{\textbf{B}}_{\text{ext}}(\tilde{x}) = \tilde{B}_{\text{ext}}(\tilde{x}) e(\tilde{x}),\quad \mathrm{div}_{\tilde{x}}(\tilde{B}_{\text{ext}}e) =0, \quad |e(\tilde{x}) | =1, \quad \tilde{x}\in\R^3,
\]
for some scalar positive function $\tilde{B}_{\text{ext}}(\tilde{x})$ and some field of unitar vector $e(\tilde{x})$. We assume that $\tilde{B}_{\text{ext}}$, $e$ are smooth.
In equation \eqref{Non_scale_VMFP}, the operator $\mathcal{Q}(\tilde{f})$ is the linear Fokker-Planck operator acting on the velocities, which accounts for friction and diffusion effects, i.e.,
\[
\mathcal{Q}(\tilde{f}) = \dfrac{1}{\tau}\mathrm{div}_{\tilde{v}}{\{\sigma \nabla_{\tilde{v}} \tilde{f} + \tilde{v}\tilde{f}\}},
\]
where $\tau$ is the relaxation time and $\sigma$ is the velocity diffusion, see \cite{Cha1949} for the introduction of this operator, based on the principle of Brownian motion. The system \eqref{Non_scale_VMFP}-\eqref{Non_scale_Max2} is referred as the Vlasov-Maxwell-Fokker-Planck (VMFP) system.  The system is supplemented with the following initial data for the distribution function $\tilde{f}$ and the electro-magnetic field $(\tilde{E},\tilde{B})$
\begin{equation*}
\begin{split}
\tilde{f}(0,\tilde{x},\tilde{v}) = \tilde{f}_{0}(\tilde{x},\tilde{v}),\quad(\tilde{x},\tilde{v})\in\R^3\times\R^3,\\
\tilde{E}(0,\tilde{x}) = \tilde{E}_0 (\tilde{x}),\quad \tilde{B}(0,\tilde{x}) = \tilde{B}_0(\tilde{x}),\quad \tilde{x}\in\R^3.
\end{split}
\end{equation*}
We suppose that initially the plasma is globally neutral, i.e.,
\begin{equation}
\label{equ:global_neutral}
\int_{\R^3}\int_{\R^3} \tilde{f}_0(\tilde{x},\tilde{v})\mathrm{d}\tilde{v}\mathrm{d}\tilde{x} = \int_{\R^3} \tilde{D}(\tilde{x})\mathrm{d}\tilde{x},
\end{equation}
and also that the initial data satisfy the standard compatibility conditions
\begin{equation}
\label{equ:compa_conds}
\mathrm{div}_{\tilde{x}} \tilde{E}_0(\tilde{x}) = q (\int_{\R^3} \tilde{f}_0 (\tilde{x},\tilde{v})\mathrm{d}\tilde{v} -\tilde{D}),\quad \mathrm{div}_{\tilde{x}} \tilde{B}_0(\tilde{x}) =0.
\end{equation}
The VMFP system is widely used in plasma physics, particularly in the study of fusion plasmas, space plasmas, and plasma-based technologies such as plasma propulsion and plasma processings.

At the end of this part, we briefly review some existing works related to the mathematical theory of solutions to the VMFP system. The global existence of renormalized solutions has been obtained by Diperna-Lions in \cite{DipLion}. Regarding the classical solution, we mention the following: In the specific one and one-half dimensional case, the global-in-time existence and uniqueness of classical solutions for the relativistic version of the Vlasov-Maxwell-Fokker-Planck system was proved by \cite{Lai, PanMi}. Both Yu and Yang \cite{YangYu}, Chae \cite{Chae} and Wang \cite{Wang} constructed global classical
solutions to the three-dimensional Vlasov–Maxwell–Fokker–Planck system for initial
data sufficiently close to Maxwellian using Kawashima estimates and the well-known nonlinear energy method, which proposed by Guo \cite{Guo02, Guo03}. However, the global existence of classical solution for the VMFP system in a general framework is a largely open problem. In addition to the global well-posedness, the high electric field limit of the three-dimensional VMFP is derived by Bostan and Goudan in \cite{BosGou08}, while the asymptotic regime of the relativistic VMFP has studied in one and one-half-dimensional framework \cite{BosGou}.

The solution of the VMFP system satisfies the following  fundamental physical properties:
\begin{pro}
Let $\tilde{T}>0$ and let $(\tilde{f},\tilde{E},\tilde{B})$ be a smooth solution of the system VMFP \eqref{Non_scale_VMFP}-\eqref{Non_scale_Max2} with initial data $(\tilde{f}_0,\tilde{E}_0,\tilde{B}_0)$. Then
\begin{enumerate}
\item The solution preserves the particle number on $[0,\tilde{T}]$
\begin{equation}
\label{mass_sm}
\int_{\R^3}\int_{\R^3} \tilde{f}(\tilde{t},\tilde{x},\tilde{v})\mathrm{d}\tilde{v}\mathrm{d}\tilde{x} = \int_{\R^3}\int_{\R^3} \tilde{f}_0(\tilde{x},\tilde{v})\mathrm{d}\tilde{v}\mathrm{d}\tilde{x},
\end{equation}
\item The solution satisfies the free-energy estimate  on $[0,\tilde{T}]$
\begin{equation}
\label{energy_sm}
\begin{split}
& \int_{\R^3}\int_{\R^3}\left(\sigma\ln \tilde{f} + \dfrac{|\tilde{v}|^2}{2} \right)\tilde{f} \mathrm{d}\tilde{v}\mathrm{d}\tilde{x} +  \int_{\R^3}\left(\dfrac{\epsilon_0}{2m}|\tilde{E}|^2 + \dfrac{1}{2\mu_0 m}|\tilde{B}|^2 \right)
\mathrm{d}\tilde{x} \\
&\quad + \dfrac{1}{\tau}\int_0^{\tilde{t}}\int_{\R^3}\int_{\R^3}\dfrac{|\sigma\nabla_{\tilde{v}} \tilde{f} + \tilde{v}\tilde{f}|^2}{\tilde{f}}\mathrm{d}\tilde{v}\mathrm{d}\tilde{x}\mathrm{d}\tilde{s} \\
& \leq   \int_{\R^3}\int_{\R^3}\left(\sigma\ln \tilde{f}_0 + \dfrac{|\tilde{v}|^2}{2} \right)\tilde{f}_0 \mathrm{d}\tilde{v}\mathrm{d}\tilde{x} 
+  \int_{\R^3}\left(\dfrac{\epsilon_0}{2m}|\tilde{E}_0|^2 + \dfrac{1}{2\mu_0 m}|\tilde{B}_0|^2 \right)
\mathrm{d}\tilde{x}.
\end{split}
\end{equation}
\item We have the following local conservation laws on $[0,T]\times\Omega$
\begin{equation}
\label{equ:law_mass}
\partial_{\tilde{t}} \tilde{n} + \mathrm{div}_{\tilde{x}} \tilde{j} =0,
\end{equation}
\begin{equation}
\label{equ:law_moment}
\begin{split}
\partial_{\tilde{t}}\tilde{j}  + \mathrm{div}_{\tilde{x}} \left(\int_{\R^3} \tilde{v}\otimes \tilde{v} \tilde{f} \mathrm{d}\tilde{v} \right)  - \dfrac{q}{m} \left(\tilde{E} \tilde{n} 
+ \tilde{j} \wedge (\tilde{B} + \tilde{\textbf{B}}_{\text{ext}}) \right) 
 = -\dfrac{1}{\tau}\tilde{j}.
\end{split}
\end{equation}
\end{enumerate}
\end{pro}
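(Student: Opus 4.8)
The plan is to obtain all three properties by testing the Vlasov equation \eqref{Non_scale_VMFP} against suitable weights and integrating over $(\tilde{x},\tilde{v})$, exploiting two structural facts. First, the force field $\tilde{E}+\tilde{v}\wedge(\tilde{B}+\tilde{\textbf{B}}_{\text{ext}})$ is divergence-free in $\tilde{v}$, since $\mathrm{div}_{\tilde{v}}(\tilde{v}\wedge \mathbf{b})=0$ for any $\mathbf{b}$ independent of $\tilde{v}$ and $\tilde{E}$ does not depend on $\tilde{v}$; second, the Fokker-Planck operator $\mathcal{Q}(\tilde{f})$ is itself a $\tilde{v}$-divergence. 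Throughout, the smoothness and decay-at-infinity built into the notion of ``smooth solution'' justify every integration by parts and the vanishing of the boundary terms at infinity.

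For the particle number \eqref{mass_sm} I would integrate \eqref{Non_scale_VMFP} against the weight $1$ over $\R^3\times\R^3$: the transport term vanishes after integration by parts in $\tilde{x}$, the force term vanishes because the force is $\tilde{v}$-divergence-free, and $\int_{\R^3}\mathcal{Q}(\tilde{f})\,\mathrm{d}\tilde{v}=0$ as it is a perfect $\tilde{v}$-divergence; integrating in time gives conservation. The local mass law \eqref{equ:law_mass} is the same computation with the $\tilde{x}$-dependence retained, i.e.\ integrating only in $\tilde{v}$: the transport term produces $\mathrm{div}_{\tilde{x}}\tilde{j}$ while the force and collision terms drop out. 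For the momentum law \eqref{equ:law_moment} I would test against $\tilde{v}$. The transport term yields $\mathrm{div}_{\tilde{x}}\int_{\R^3}\tilde{v}\otimes\tilde{v}\,\tilde{f}\,\mathrm{d}\tilde{v}$; integration by parts in $\tilde{v}$ together with $\mathrm{div}_{\tilde{v}}\big(\tilde{v}\wedge(\tilde{B}+\tilde{\textbf{B}}_{\text{ext}})\big)=0$ converts the force term into $-\tfrac{q}{m}\big(\tilde{E}\tilde{n}+\tilde{j}\wedge(\tilde{B}+\tilde{\textbf{B}}_{\text{ext}})\big)$; and testing $\mathcal{Q}(\tilde{f})$ against $\tilde{v}$ gives exactly $-\tfrac{1}{\tau}\tilde{j}$ after one integration by parts, producing the friction term on the right-hand side.

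The free-energy estimate \eqref{energy_sm} is the delicate point and the main obstacle. I would test \eqref{Non_scale_VMFP} against $\sigma\ln\tilde{f}+|\tilde{v}|^2/2$. The transport term vanishes after integration in $\tilde{x}$; the magnetic force does no work because $(\tilde{v}\wedge(\tilde{B}+\tilde{\textbf{B}}_{\text{ext}}))\cdot\tilde{v}=0$, and its entropy contribution vanishes by the $\tilde{v}$-divergence-free structure; the electric force contributes, after integration by parts in $\tilde{v}$, precisely $\tfrac{q}{m}\int_{\R^3}\tilde{E}\cdot\tilde{j}\,\mathrm{d}\tilde{x}$. The key algebraic identity is $\nabla_{\tilde{v}}\big(\sigma\ln\tilde{f}+|\tilde{v}|^2/2\big)=\tfrac{1}{\tilde{f}}\big(\sigma\nabla_{\tilde{v}}\tilde{f}+\tilde{v}\tilde{f}\big)$, which, upon integrating $\mathcal{Q}(\tilde{f})$ by parts, turns the collisional contribution into the manifestly nonpositive dissipation $-\tfrac{1}{\tau}\int_{\R^3}\int_{\R^3}\tfrac{|\sigma\nabla_{\tilde{v}}\tilde{f}+\tilde{v}\tilde{f}|^2}{\tilde{f}}\,\mathrm{d}\tilde{v}\,\mathrm{d}\tilde{x}$. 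In parallel I would dot the first equation of \eqref{Non_scale_Max1} with $\tilde{E}$ and the second with $\tilde{B}$, divide by $\mu_0 m$, and use the Poynting identity $\int_{\R^3}\big(\tilde{E}\cdot\mathrm{curl}_{\tilde{x}}\tilde{B}-\tilde{B}\cdot\mathrm{curl}_{\tilde{x}}\tilde{E}\big)\,\mathrm{d}\tilde{x}=0$ to get $\tfrac{\mathrm{d}}{\mathrm{d}\tilde{t}}\int_{\R^3}\big(\tfrac{\epsilon_0}{2m}|\tilde{E}|^2+\tfrac{1}{2\mu_0 m}|\tilde{B}|^2\big)\,\mathrm{d}\tilde{x}=-\tfrac{q}{m}\int_{\R^3}\tilde{E}\cdot\tilde{j}\,\mathrm{d}\tilde{x}$. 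Adding the kinetic and electromagnetic identities cancels the electric-work term, and integrating in time from $0$ to $\tilde{t}$ yields \eqref{energy_sm} with equality, the stated inequality following a fortiori.

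I expect the genuine difficulty to be purely technical rather than conceptual: controlling the entropy term $\int\!\int \tilde{f}\ln\tilde{f}$ and justifying the $\tilde{v}$-integration by parts on the logarithmic weight. For this one relies on the Gaussian-type decay of the smooth solution and uses the mass conservation \eqref{mass_sm} to discard the harmless additive constant arising from $\nabla_{\tilde{v}}(\tilde{f}\ln\tilde{f})=(1+\ln\tilde{f})\nabla_{\tilde{v}}\tilde{f}$. All the remaining manipulations are the standard weighted-multiplier computations, their only subtlety being the careful bookkeeping of the magnetic terms, which vanish precisely because the Lorentz force is orthogonal to $\tilde{v}$ and divergence-free in $\tilde{v}$.
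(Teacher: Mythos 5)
Your proposal is correct: the weighted-multiplier computations (testing against $1$, $\tilde v$, and $\sigma(1+\ln\tilde f)+|\tilde v|^2/2$, combined with the Poynting identity to cancel the electric work term $\frac{q}{m}\int \tilde E\cdot\tilde j\,\mathrm{d}\tilde x$) are exactly the standard argument, and the paper in fact states this proposition without proof, carrying out the very same manipulations elsewhere (in the formal derivation of Section \ref{ForDerLimMod}, in the proof of Proposition \ref{BalLiMod}, and in Lemma \ref{regu_loc_ener} of the Appendix). No gap to report.
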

\subsection{Scaling of the Vlasov-Maxwell-Fokker-Planck system}
We are now interested in deriving the asymptotic limit of the VMFP system \eqref{Non_scale_VMFP}-\eqref{Non_scale_Max2}, which describes the long-time dynamics of charged particles subjected to a strong magnetic field, in order to observe the drift phenomena in the directions orthogonal to the magnetic field. Indeed, it is well known that the velocities of electric cross-field drift, the magnetic gradient drift, and the magnetic curvature drift are proportional to $\frac{1}{B}$. Consequently, it is necessary to observe these drift movements over a large period of time that is proportional to $B$; see \cite{GolSaintMag1999, BosTuan}. Accordingly, we consider the following
\begin{align*}
&\textbf{B}^\eps_{\text{ext}} (x)  = \dfrac{\tilde{\textbf{B}}_{\text{ext}}(\tilde{x})}{\eps},\quad \tilde{f}(\tilde{t},\tilde{x},\tilde{v}) = f^\eps( t,x,v),\quad \tilde{E}(\tilde{t},\tilde{x})=E^\eps(t,x), \quad \tilde{B}(\tilde{t},\tilde{x}) = B^\eps(t,x), \\
&\hspace{45mm} t = \eps\tilde{t} ,\quad x =\tilde x,\quad v =\tilde{v}.
\end{align*}
Here, $\eps >0$ is a small parameter, related to the ratio between the cyclotronic period  and the observation time scale. Hence, 
\[
\partial_{\tilde t} \tilde{f} = \eps \partial_{ t} f^\eps,\quad \partial_{\tilde t} \tilde{E} = \eps \partial_t E^\eps,\quad \partial_{\tilde t} \tilde{B} = \eps \partial_t B^\eps.
\]
Therefore, in equations \eqref{Non_scale_VMFP} and \eqref{Non_scale_Max1}, the
term $\partial_{\tilde t}$ is to be replaced by $\eps \partial_t$; the VMFP system  becomes:
\begin{equation}
\label{equ:VMFP-Scale}
\eps \partial_t f^\eps + v\cdot \nabla_x f^\eps + \dfrac{q}{m}( E^\eps + v\wedge (B^\eps + \dfrac{B_{\text{ext}}e}{\eps}))\cdot\nabla_v f^\eps=\mathcal{Q}(f^\eps) = \dfrac{1}{\tau}\Divv (\sigma \nabla_v f^\eps + v f^\eps),
\end{equation}
\begin{equation}
\label{equ:MaxwellEps}
\mu_0 \epsilon_0 \eps \partial_t E^\eps = \cur B^\eps - \mu_0 J^\eps, \quad \eps \partial_t B^\eps = - \cur E^\eps,
\end{equation}
\begin{equation}
\label{equ:GaussEps}
\epsilon_0 \Divx E^\eps = \rho^\eps= q(n^\eps-D),\quad \Divx B^\eps =0,
\end{equation}
with $\rho^\eps$ and $J^\eps$ given by \eqref{Non_scale_charge} and \eqref{Non_scale_curr}, respectively. 
We complete the above system by applying the following initial conditions:
\begin{equation}
\label{equ:Initial}
f^\eps(0,x,v) = f^\eps_{0}(x,v),\quad
E^\eps(0,x) = E^\eps_0(x),\quad B^\eps(0,x) = B^\eps_0(x),\quad (x,v)\in\R^3\times\R^3,
\end{equation}
which verify \eqref{equ:global_neutral} and \eqref{equ:compa_conds}.
\begin{remark}
\label{re_Maxwell}
The Maxwell equations \eqref{equ:MaxwellEps}-\eqref{equ:GaussEps} can be rewritten in terms of the scalar electrical potential $\Phi^\eps$ and the magnetic vector potential $A^\eps$. The electromagnetic field $(E^\eps,B^\eps)$ is then given by the formulas:
\begin{equation}
\label{elecmag_pot}
E^\eps = -\nabla_x \Phi^\eps - \eps \partial_t A^\eps,\quad B^\eps = \cur A^\eps,
\end{equation}
where the magnetic potential $A^\eps$ verifies the Coulomb's gauge
\[
\Divx A^\eps =0.
\]
Hence, using $\cur B^\eps = \cur\cur A^\eps = -\Delta_x A^\eps$, the electromagnetic potential $(\Phi^\eps,A^\eps)$ satisfies the following equations:
\begin{equation}
\label{equ:pot_em}
\begin{split} \mu_0 \epsilon_0 \eps^2 \partial^2_{tt}A^\eps - \Delta_x A^\eps = \mu_0(J^\eps - \epsilon_0\eps \partial_t\nabla_x\Phi^\eps) ,\\
-\epsilon_0 \Delta_x \Phi^\eps = \rho^\eps,\quad \Divx A^\eps =0.
\end{split}
\end{equation}
\end{remark}
\subsection{Formal derivation of the limit model}
\label{ForDerLimMod}
This section is devoted to deriving the limit model for \eqref{equ:VMFP-Scale}-\eqref{equ:Initial} when $\eps$ becomes
very small, using the properties of the averaged dominant transport operator. At a formal level, we initiate our analysis with a Hilbert expansion 
\begin{equation*}
\begin{split}
f^\eps &= f + \eps f_1 + \eps^2 f_2 + ...\\
E^\eps &= E + \eps E_1 + \eps^2 E_2 + ...\\
B^\eps &= B + \eps B_1 + \eps^2 B_2 + ....
\end{split}
\end{equation*}
 Plugging the above ansatz into the kinetic equation \eqref{equ:VMFP-Scale} and the Maxwell equation \eqref{equ:MaxwellEps}, and then identifying the terms with equal powers in $\eps$, leads to
\begin{align}
\label{equ:Order0}
\eps^{-1}:& \quad\quad(v\wedge B_{\text{ext}}e)\cdot\nabla_v f =0.\\
\label{equ:Order1}
\eps^{0}:& \quad \left\{\begin{array}{l}
 v\cdot \nabla_x f + \dfrac{q}{m}(E + v\wedge B)\cdot \nabla_v f + \dfrac{q}{m}(v\wedge B_{\text{ext}}e)\cdot \nabla_v f_1 = \mathcal{Q}(f),\\
\cur B = \mu_0 q \int_{\R^3}v f \mathrm{d}v,\quad \Divx B =0,\\
\cur E =0,\quad -\epsilon_0 \Divx E = q (\int_{\R^3} f \mathrm{d}v -D).
\end{array}
\right.\\
\label{equ:Order2}
\eps^{1}:&  \quad \left\{\begin{array}{l}
\partial_t f + v\cdot \nabla_x f_1 + \dfrac{q}{m}(E + v\wedge B)\cdot\nabla_v f_1 + \dfrac{q}{m}(E_1+ v\wedge B_1)\cdot\nabla_v f \\+ \dfrac{q}{m}(v\wedge B_{\text{ext}}e)\cdot \nabla_v f_2 = \mathcal{Q}(f_1),\\
\mu_0 \epsilon_0 \partial_t E = \cur B_1 - \mu_0 q \int_{\R^3}v f_1 \mathrm{d}v,\quad \Divx B_1 =0,\\
\partial_t B = -\cur E_1,\quad -\epsilon_0 \Divx E_1 = q \int_{\R^3}f_1\mathrm{d}v.
\end{array}\right.
\end{align}
Multiplying the first equation of \eqref{equ:Order1} by $\sigma(1+ \ln f) + \frac{|v|^2}{2}$ and integrating with respect to $(x,v)\in \R^3\times\R^3$, then using \eqref{equ:Order0} yields
\begin{align}
\label{equ:EquBalancef1}
 \dfrac{1}{\tau}\int_{\R^3}\int_{\R^3}{\dfrac{|\sigma M \nabla_v(f/M)|^2}{f}}\mathrm{d}v\mathrm{d}x = \int_{\R^3}\int_{\R^3}{\dfrac{q}{m}E\cdot vf}\mathrm{d}v\mathrm{d}x,
\end{align}
where $M=M(v) = \frac{e^{-|v|^2/2\sigma}}{(2\pi\sigma)^{3/2}}$ is the Maxwellian distribution in velocity. Integrating the first equation of \eqref{equ:Order1} with respect to $v\in\R^3$, we deduce that $\Divx \intvt{vf} =0$. On the other hand, from $\cur E =0$, it is easy to see that $E = - \nabla_x \Phi$, where the electrical potential $\Phi$ is a solution to the Poisson equation
\[
-\epsilon_0 \Delta_x \Phi  = q(\int_{\R^3}f\mathrm{d}v -D).
\]
This implies that the term in the right hand side of 
\eqref{equ:EquBalancef1} cancels. Therefore we obtain
\[
\dfrac{1}{\tau} \int_{\R^3}\int_{\R^3}{\dfrac{|\sigma M\nabla_v (f/M)|^2}{f}}\mathrm{d}v\mathrm{d}x =0,\,\, t\in \R_+ ,
\]
meaning that $f =nM$, for some function $n=n(t,x)$ to be determined. In that case, the constraint \eqref{equ:Order0} is satisfied and the first equation of \eqref{equ:Order1} becomes
\begin{equation}
\label{equ:constraint}
v\cdot\nabla_x f + \dfrac{q}{m}(E + v\wedge B)\cdot\nabla_v f \in \mathrm{Range}((v\wedge e(x))\cdot\nabla_v),\,\, x\in \Omega.
\end{equation}
For any $b \in \mathbb{S}^2 $, we denote by $\mathcal{R} (\theta,b)$ the rotation of angle $\theta$ around the axis $b$
\begin{equation*}
\mathcal{R}(\theta, b)v = \cos\theta (I_3 -b\otimes b)v - \sin\theta (v\wedge b) + (v\cdot b)b,\,\, v\in\R^3.
\end{equation*}
The characteristic flow of the field $(v\wedge e)\cdot\nabla_v$
\[
\dfrac{\mathrm{d}\calV}{\mathrm{d}\theta} = \calV(\theta;v)\wedge e, \quad \calV(0;v)= v,
\]
is given by
\[
\calV(\theta;v) = \mathcal{R}(-\theta,e)v = \cos\theta (I_3 -e\otimes e)v + \sin\theta (v\wedge e) + (v\cdot e)e,\,\, (\theta,v)\in \R\times\R^3.
\]
For any function $g(v) = (v\wedge e)\cdot\nabla_v h$ in the range of the operator $(v\wedge e)\cdot\nabla_v$, we have
\[
g(\calV(\theta;v)) = \dfrac{\mathrm{d}}{\mathrm{d}\theta}h(\calV(\theta;v)),\,\, (\theta,v)\in \R\times\R^3,
\]
and by the periodicity of the flow we obtain
\[
\dfrac{1}{2\pi}\int_{0}^{2\pi} g(\calV(\theta;v)) \mathrm{d}\theta =0, \,\, v\in\R^3.
\]
Therefore, for any $x\in\R^3$, the average along the characteristic flow with respect to $(v\wedge e(x))\cdot\nabla_v$ of the function $v\cdot\nabla_x f + \frac{q}{m}(E+v\wedge B)\cdot\nabla_v f$ vanishes. But
\begin{equation*}
\begin{split}
(v\wedge B)\cdot\nabla_v f &=0,\\
v\cdot\nabla_x f + \frac{q}{m}E\cdot\nabla_v f & = \dfrac{n}{\sigma}Mv\cdot\left[\sigma\dfrac{\nabla_x n}{n} - \dfrac{q}{m}E\right],
\end{split}
\end{equation*}
and since
\[
\dfrac{1}{2\pi} \int_{0}^{2\pi} M(\calV(\theta;v))\calV(\theta;v)\mathrm{d}\theta = M(v)(v\cdot e)e,
\]
finally, from \eqref{equ:constraint}, we obtain the following constraint
\[
e\cdot \sigma\dfrac{\nabla_x n}{n} =  \dfrac{q}{m}e\cdot E,\,\, x\in\R^3.
\]
Multiplying the first equation of \eqref{equ:Order1} by $v$ and integrating with respect to $v\in\R^3$ we obtain
\[
\Divx \int_{\R^3}{v\otimes v f}\mathrm{d}v - n\dfrac{q}{m}E - \omega_c \int_{\R^3}{v f_1\wedge e}\mathrm{d}v =0,\quad \omega_c(x) = \dfrac{qB_{\text{ext}}(x)}{m}.
\]
Since $f$ is a Maxwellian equilibrium, we have $\intvt{v\otimes v f} = \sigma n I_3$ and the previous equality becomes
\[
\omega_c \int_{\R^3}{v f_1}\mathrm{d}v \wedge e = \sigma \nabla_x n - n\dfrac{q}{m}E,
\]
or equivalently
\begin{align*}
 (I_3 - e\otimes e)\int_{\R^3}{v f_1}\mathrm{d}v = \dfrac{ne}{\omega_c}\wedge k[n],\quad  k[n]= \sigma\dfrac{\nabla_x n}{n} - \dfrac{q}{m}E.
\end{align*}
Combing the above arguments, therefore from \eqref{equ:Order1}, we obtain that
\begin{equation}
\label{equ:Order1_bis}
\left\{\begin{array}{l}
e\cdot k[n] =0 ,\quad k[n] = \sigma\dfrac{\nabla_x n}{n} - \dfrac{q}{m} E,\\
(I_3 - e\otimes e)\int_{\R^3}vf_1\mathrm{d}v = \dfrac{ne}{\omega_c}\wedge k[n],\\
\cur B =0, \quad \Divx B =0,\\
\cur E =0, \quad -\epsilon_0 \Divx E = q (n -D(x)).
\end{array}\right.
\end{equation}
The condition $\Divx B = 0$ implies that there exists a magnetic vector potential $A$ such that $B = \cur A$. Then, from $\cur B = 0$, we have $\cur\cur A = 0$ leading to $\int_{\R^3}|\cur A|^2\mathrm{d}x =0$. Hence, we obtain $B =\cur A =0$.
\begin{remark} Performing Hilbert expansions
\begin{align*}
\Phi^\eps = \Phi + \eps \Phi_1 +\eps^2 \Phi_2 + ...\\
A^\eps = A + \eps A_1 + \eps^2 A_2 + ...,
\end{align*}
we also obtain from equations \eqref{elecmag_pot} and \eqref{equ:pot_em} that  $E = -\nabla_x\Phi$ and $B = \cur A$ with
\begin{align*}
-\epsilon_0 \Delta_x \Phi = \rho = q(n -D),\\
-\Delta_x A = \mu_0 J =0,\quad \Divx A =0.
\end{align*} 
\end{remark}

The time evolution for the concentration $n$ comes by integrating the first equation of \eqref{equ:Order2} with respect to $v\in\R^3$
\begin{equation}
\label{equ:EquContinum}
\partial_t n + \Divx\int_{\R^3}{v f_1}\mathrm{d}v =0.
\end{equation}
Using the second equation of \eqref{equ:Order1_bis}, the divergence with respect to $x$ of $\int_{\R^3}{vf_1}\mathrm{d}v$ writes
\begin{align*}
\Divx \int_{\R^3}{v f_1}\mathrm{d}v &= \Divx \left[ (I_3 - e\otimes e)\int_{\R^3}{v f_1}\mathrm{d}v \right] + \Divx \left[ e\otimes e\intvt{v f_1} \right]\\
&= \Divx\left( \dfrac{n e}{\omega_c}\wedge k[n]\right) +  B_{\text{ext}}e\cdot\nabla_x \int_{\R^3}{\dfrac{(v\cdot e)f_1}{B_{\text{ext}}}}\mathrm{d}v.
\end{align*}
Coming back in \eqref{equ:EquContinum}, we obtain that the limiting concentration $n$ satisfies the following equation 
\[
\partial_t n + \Divx\left( \dfrac{n e}{\omega_c}\wedge k[n]\right) +  B_{\text{ext}}e\cdot\nabla_x p =0,\quad p=\int_{\R^3}{\dfrac{(v\cdot e)f_1}{B_{\text{ext}}}}\mathrm{d}v.
\]
Therefore, from \eqref{equ:Order2}, we have
\begin{equation}
\label{equ:Order2_bis}
\left\{\begin{array}{l}
\partial_t n + \Divx\left( \dfrac{n e}{\omega_c}\wedge k[n]\right) +  B_{\text{ext}}e\cdot\nabla_x p =0,\\
\mu_0\epsilon_0 \partial_t E = \cur B_1 - \mu_0 q \dfrac{n e}{\omega_c}\wedge k[n] - \mu_0 q B_{\text{ext}}e p,\\
  \Divx B_1=0.
 \end{array}\right.
\end{equation}
Combining \eqref{equ:Order1_bis} and \eqref{equ:Order2_bis}, we obtain the limit model of \eqref{equ:VMFP-Scale}-\eqref{equ:GaussEps} as $\eps\searrow 0$ given by
\begin{equation}
\label{equ:lim_mod_3d}
\left\{\begin{array}{l}
\partial_t n + \Divx\left( \dfrac{n e}{\omega_c}\wedge k[n] \right) +  B_{\text{ext}}e\cdot\nabla_x p =0,\\
e\cdot k[n] =0,\quad k[n] = \sigma \dfrac{\nabla_x n}{n} -\dfrac{q}{m}E,\\
\cur E = 0,\quad \epsilon_0 \Divx E = q (n -D),\\
\mu_0\epsilon_0 \partial_t E = \cur B_1 - \mu_0 q \dfrac{n e}{\omega_c}\wedge k[n]
 - \mu_0 q B_{\text{ext}}e p,\\
 \Divx B_1=0.
 \end{array}\right.
\end{equation}
The limit model involves a Lagrange multiplier $p$, associated to the constraint. 
\begin{remark} The asymptotic model \eqref{equ:lim_mod_3d} describes the non-trivial electron dynamics perpendicular to the magnetic field. The well-posedness of this problem is an interesting issue and is left for further works.
\end{remark}

We now provide  some fundamental properties satisfied by the asymptotic model \eqref{equ:lim_mod_3d}. Formally, we have the following balances:
\begin{pro}
\label{BalLiMod}
Any non-negative regular solution of the limit model \eqref{equ:lim_mod_3d} verifies the mass and free energy conservations
\[
\dfrac{\mathrm{d}}{\mathrm{d}t}\int_{\R^3}{n(t,x)}\mathrm{d}x =0,\quad \dfrac{\mathrm{d}}{\mathrm{d}t}\int_{\R^3}{\left\{\sigma n\ln n + \dfrac{\epsilon_0}{2m}|E|^2\right\}}\mathrm{d}x =0.
\]
\end{pro}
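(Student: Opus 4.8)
The plan is to establish both balances by differentiating the conserved quantities in time, substituting the evolution equations of \eqref{equ:lim_mod_3d}, integrating by parts over $\R^3$, and exploiting the constraint together with the structural relation $\Divx(B_{\text{ext}}e)=0$. Throughout I assume the regular solution decays fast enough at infinity that all boundary terms vanish and differentiation under the integral sign is licit. For the mass balance I would integrate the first equation of \eqref{equ:lim_mod_3d} over $\R^3$. The transport term $\Divx\bigl(\frac{ne}{\omega_c}\wedge k[n]\bigr)$ integrates to zero as a perfect divergence, and for the Lagrange-multiplier term the key observation is that $\Divx(B_{\text{ext}}e)=0$, so that
\[
B_{\text{ext}}e\cdot\nabla_x p = \Divx(p\,B_{\text{ext}}e),
\]
which again integrates to zero; this yields $\frac{\md}{\md t}\int_{\R^3}n\,\md x=0$.

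For the free-energy balance I would treat the entropy part $\int_{\R^3}\sigma n\ln n\,\md x$ and the field part $\int_{\R^3}\frac{\epsilon_0}{2m}|E|^2\,\md x$ separately and show that their time derivatives are opposite. Using the mass balance to discard the constant and the continuity equation, the entropy derivative equals
\[
-\int_{\R^3}\sigma\ln n\Bigl[\Divx\Bigl(\tfrac{ne}{\omega_c}\wedge k[n]\Bigr)+B_{\text{ext}}e\cdot\nabla_x p\Bigr]\md x .
\]
Integrating by parts and inserting $\sigma\frac{\nabla_x n}{n}=k[n]+\frac{q}{m}E$ from the definition of $k[n]$, I would then use the two algebraic facts $k[n]\cdot(e\wedge k[n])=0$ (orthogonality of the cross product) and $e\cdot k[n]=0$ (the constraint), together with $\Divx(B_{\text{ext}}e)=0$ and the identity $\frac{qn}{m\omega_c}=\frac{n}{B_{\text{ext}}}$ coming from $\omega_c=qB_{\text{ext}}/m$. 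This collapses the entropy derivative to
\[
\frac{\md}{\md t}\int_{\R^3}\sigma n\ln n\,\md x=\int_{\R^3}\frac{n}{B_{\text{ext}}}E\cdot(e\wedge k[n])\,\md x+\int_{\R^3}\frac{q}{m}B_{\text{ext}}\,p\,(E\cdot e)\,\md x .
\]

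For the field part, differentiating and substituting the Ampère-type equation of \eqref{equ:lim_mod_3d} gives
\[
\frac{\md}{\md t}\int_{\R^3}\frac{\epsilon_0}{2m}|E|^2\,\md x=\frac{1}{m}\int_{\R^3}E\cdot\Bigl[\tfrac{1}{\mu_0}\cur B_1-q\tfrac{ne}{\omega_c}\wedge k[n]-qB_{\text{ext}}e\,p\Bigr]\md x .
\]
The decisive point here is that $\int_{\R^3}E\cdot\cur B_1\,\md x=\int_{\R^3}B_1\cdot\cur E\,\md x=0$ because $\cur E=0$; the two remaining terms reproduce exactly $-\int_{\R^3}\frac{n}{B_{\text{ext}}}E\cdot(e\wedge k[n])\,\md x-\int_{\R^3}\frac{q}{m}B_{\text{ext}}\,p\,(E\cdot e)\,\md x$. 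Adding the entropy and field contributions, they cancel term by term, which gives the second identity.

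I expect the main obstacle to be purely algebraic bookkeeping: tracking the scalar triple products so that the electric ``flux'' term $E\cdot(e\wedge k[n])$ arising from the entropy dissipation matches precisely the one generated by the field energy, and simultaneously pairing the Lagrange-multiplier terms $B_{\text{ext}}\,p\,(E\cdot e)$. The Maxwell structure enters in a single but essential way through $\cur E=0$, which is what eliminates the $\cur B_1$ contribution; without it the field energy would fail to be conserved.
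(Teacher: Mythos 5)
Your proposal is correct and follows essentially the same route as the paper: multiply the continuity equation by $\sigma(1+\ln n)$ and the Amp\`ere-type equation by $E/(\mu_0 m)$, integrate by parts, kill the $\cur B_1$ contribution via $\cur E=0$, and cancel the remaining terms using the constraint $e\cdot k[n]=0$ together with the orthogonality of the triple product. The only cosmetic difference is that you simplify the entropy derivative first via the decomposition $\sigma\nabla_x n/n=k[n]+\frac{q}{m}E$ before matching it against the field energy, whereas the paper adds the two identities and cancels $(\frac{ne}{\omega_c}\wedge k[n])\cdot k[n]$ and $B_{\text{ext}}\,p\,(e\cdot k[n])$ in one step.
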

\begin{proof}
Clearly we have the total mass conservation. For the energy conservation, first, multiplying the first equation of \eqref{equ:lim_mod_3d} by $\sigma(1+ \ln n)$ and integrating with respect to $x$, we have
\begin{equation}
\label{equ:entro_lim_mod}
\dfrac{\mathrm{d}}{\mathrm{d}t} \int_{\R^3} \sigma n\ln n \mathrm{d}x = \int_{\R^3} \left[ \dfrac{ne}{\omega_c}\wedge \left( \sigma \dfrac{\nabla_x n}{n} - \dfrac{q}{m} E\right)\right]\cdot \sigma \dfrac{\nabla_x n}{n} \mathrm{d}x + \int_{\R^3} B_{\text{ext}}e p \cdot \sigma\dfrac{\nabla_x n}{n}\mathrm{d}x.
\end{equation}
Then, multiplying the fourth equation of \eqref{equ:lim_mod_3d} with $\frac{E}{\mu_0 m}$ and integrating with respect to $x$, we obtain
\begin{equation}
\label{equ:elec_ener_mod}
\begin{split}
\dfrac{\epsilon_0}{2m} \dfrac{\mathrm{d}}{\mathrm{d}t}\int_{\R^3}|E|^2\mathrm{d}x &= \dfrac{1}{\mu_0 m}\int_{\R^3}\cur B_1\cdot E \mathrm{d}x -  \dfrac{q}{m} \int_{\R^3} \left[ \dfrac{ne}{\omega_c}\wedge \left( \sigma \dfrac{\nabla_x n}{n} - \dfrac{q}{m} E\right)\right]\cdot  E \mathrm{d}x\\
& \quad -  \dfrac{q}{m}\int_{\R^3} B_{\text{ext}}e p\cdot E \mathrm{d}x.
\end{split}
\end{equation}
Using $\cur E =0$ yields $\int_{\R^3} \cur B_1\cdot E \mathrm{d}x =0$ and then using the second equation of \eqref{equ:lim_mod_3d}, we deduce from \eqref{equ:entro_lim_mod} and \eqref{equ:elec_ener_mod} the energy conservation of the limit system.
\end{proof}

Recall the usual drift velocities when dealing with magnetic confinement: the electric field drift, the magnetic gradient drift, and the magnetic curvature drift
\[
\dfrac{E\wedge e}{B}, \,\, - \dfrac{m|v\wedge e|^2}{2qB}\dfrac{\nabla_x B\wedge e}{B} = - \dfrac{|v\wedge e|^2}{2}\dfrac{\nabla_x \omega_c \wedge e}{\omega_c ^2},\,\, -\dfrac{m|v\wedge e|^2}{qB}\partial_x e e\wedge e = -\dfrac{(v\cdot e)^2}{\omega_c}\partial_x e e\wedge e.
\]
When working at the fluid level, the averages with respect to $v\in\R^3$ of the above drift velocities become
\[
v_{\wedge D} = \int_{\R^3}{\dfrac{E\wedge e}{B}M(v)}\mathrm{d}v = \dfrac{E\wedge e}{B},
\]
\[
v_{GD} = - \int_{\R^3}{\dfrac{|v\wedge e|^2}{2}\dfrac{\nabla_x \omega_c \wedge e}{\omega_c ^2}M(v)}\mathrm{d}v = -\sigma \dfrac{\nabla_x \omega_c \wedge e}{\omega_c ^2},
\]
\[
v_{CD} = - \int_{\R^3}{\dfrac{(v\cdot e)^2}{\omega_c}\partial_x e e\wedge e M(v)}\mathrm{d}v = - \sigma \dfrac{\partial_x e e\wedge e}{\omega_c}.
\]
The flux in the limit model \eqref{equ:lim_mod_3d} also writes $n\calV[n]$, where $\calV[n] = v_{\wedge D} + v_{GD} + v_{CD}$.
\begin{pro}
\label{Equiv_form}
Any non-negative regular function $n$ satisfying
\[
\partial_t n + \Divx\left[ \dfrac{ne}{\omega_c}\wedge \left(\sigma\dfrac{\nabla_x n}{n}-\dfrac{q}{m}E\right) \right] + Be\cdot\nabla_x p =0,
\]
also verifies
\[
\partial_t n + \Divx(n\calV[n]) + Be\cdot\nabla_x\tilde{p} =0,\quad \calV[n] = \dfrac{E\wedge e}{B} -\sigma \dfrac{\nabla_x \omega_c \wedge e}{\omega_c ^2}- \sigma \dfrac{\partial_x e e\wedge e}{\omega_c},
\]
and $\tilde{p} = p+ \frac{\sigma n}{B\omega_c}(e\cdot \rot_x e)$.
\end{pro}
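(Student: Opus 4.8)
The plan is to prove that the two right-hand sides differ only by a term of the form $Be\cdot\nabla_x(\,\cdot\,)$, which can then be absorbed into the Lagrange multiplier, turning $p$ into $\tilde p$. Writing $k[n]=\sigma\frac{\nabla_x n}{n}-\frac{q}{m}E$ and using $\omega_c=\frac{qB}{m}$ (so that $\frac{q}{m\omega_c}=\frac1B$), I would first expand the flux as
\[
\frac{ne}{\omega_c}\wedge k[n]=\frac{\sigma}{\omega_c}\,e\wedge\nabla_x n+n\,\frac{E\wedge e}{B},
\]
which already isolates the electric drift contribution $n\,v_{\wedge D}$. It then suffices to compute the divergence of
\[
\Psi:=\frac{ne}{\omega_c}\wedge k[n]-n\calV[n]=\frac{\sigma}{\omega_c}\,e\wedge\nabla_x n+\sigma n\,\frac{\nabla_x\omega_c\wedge e}{\omega_c^2}+\sigma n\,\frac{\partial_x e\,e\wedge e}{\omega_c},
\]
and to show that it equals $Be\cdot\nabla_x(\tilde p-p)$.

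For the first two terms of $\Psi$ I would use the product rule $\sigma\,e\wedge\nabla_x\!\big(\frac{n}{\omega_c}\big)=\frac{\sigma}{\omega_c}e\wedge\nabla_x n-\frac{\sigma n}{\omega_c^2}e\wedge\nabla_x\omega_c$, which merges them into $\sigma\,e\wedge\nabla_x g$ with $g:=n/\omega_c$. For the curvature term, I would invoke that $|e|=1$ forces $(e\cdot\nabla_x)e=-e\wedge\rot_x e$, and then apply the triple-product identity $(e\wedge\rot_x e)\wedge e=\rot_x e-(e\cdot\rot_x e)e$, obtaining $\partial_x e\,e\wedge e=-\rot_x e+(e\cdot\rot_x e)e$. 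This brings $\Psi$ to the form
\[
\Psi=\sigma\,e\wedge\nabla_x g-\sigma g\,\rot_x e+\sigma g\,(e\cdot\rot_x e)\,e.
\]

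Taking the divergence and using $\rot_x\nabla_x g=0$, $\Divx\rot_x e=0$, and $\Divx(a\wedge b)=b\cdot\rot_x a-a\cdot\rot_x b$, the first two terms cancel, since both equal $\sigma(\rot_x e)\cdot\nabla_x g$. For the last term, setting $h:=g\,(e\cdot\rot_x e)$ gives $\Divx(\sigma h\,e)=\sigma\,e\cdot\nabla_x h+\sigma h\,\Divx e$. The decisive input is the external-field constraint $\Divx(Be)=0$, equivalently $\Divx e=-\frac{e\cdot\nabla_x B}{B}$; substituting this and running the product rule backwards identifies $\Divx(\sigma h\,e)=Be\cdot\nabla_x\!\big(\frac{\sigma h}{B}\big)$, and since $\frac{\sigma h}{B}=\frac{\sigma n}{B\omega_c}(e\cdot\rot_x e)=\tilde p-p$, we reach $\Divx\Psi=Be\cdot\nabla_x(\tilde p-p)$, which is exactly the asserted equivalence.

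The argument is a chain of elementary vector-calculus identities, so no conceptual difficulty is expected; the main care will be the sign bookkeeping in the triple-product reduction of the curvature drift and in the two product rules, together with recognizing that the constraint $\Divx(Be)=0$ is precisely what converts the scalar divergence $\Divx(\sigma h\,e)$ into a $B$-weighted derivative along $e$, thereby producing the correction $\frac{\sigma n}{B\omega_c}(e\cdot\rot_x e)$ to the multiplier.
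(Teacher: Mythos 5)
Your argument is correct and rests on exactly the same ingredients as the paper's proof: the splitting of the flux into the electric drift plus a $\sigma$-term, the identity $\Divx(\xi\wedge\eta)=\eta\cdot\rot_x\xi-\xi\cdot\rot_x\eta$, the reduction of the curvature term $\partial_x e\,e\wedge e$ to $-\rot_x e+(e\cdot\rot_x e)e$ (the paper writes this as $(I_3-e\otimes e)\rot_x e=e\wedge\partial_x e\,e$, which is equivalent), and the constraint $\Divx(Be)=0$ to convert $\Divx\bigl(\tfrac{\sigma n}{\omega_c}(e\cdot\rot_x e)e\bigr)$ into $Be\cdot\nabla_x(\tilde p-p)$. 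The only difference is organizational — you compute the divergence of the difference $\Psi$ of the two fluxes rather than expanding the original flux forward into the three drift velocities — so this is essentially the paper's proof.
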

\begin{proof}
Recall the formula $\Divx{(\xi \wedge \eta)}= \eta\cdot \rot_x \xi - \xi\cdot \rot_x \eta$, for any smooth vector fields $\xi$ and $\eta$. Therefore we can write
\begin{align*}
 & \Divx\left[\dfrac{ne}{\omega_c}\wedge\left( \sigma\dfrac{\nabla_x n}{n} -\dfrac{q}{m}E \right) \right]\\
&=\Divx\left( n\dfrac{E\wedge e}{B}\right) + \sigma\, \rot_x\left(\dfrac{e}{\omega_c} \right)\cdot\nabla_x n\\
&= \Divx\left( n\dfrac{E\wedge e}{B}\right) + \sigma\, \Divx\left(n \,\rot_x \left(\dfrac{e}{\omega_c} \right) \right)\\
&= \Divx\left( n\dfrac{E\wedge e}{B}\right) - \sigma\, \Divx\left( n\dfrac{\nabla_x \omega_c \wedge e}{\omega_c ^2} - \dfrac{n}{\omega_c}\rot_x e \right)\\
&= \Divx\left( n v_{\wedge D} + n v_{GD} + \dfrac{\sigma n}{\omega_c}(I_3 -e\otimes e)\rot_x e\right) + \Divx\left(\dfrac{\sigma n}{\omega_c}(e\cdot\rot_x e)e \right).
\end{align*}
Notice that we can write 
\[
(I_3 -e\otimes e)\rot_x e = e\wedge (\rot_x e \wedge e) = e\wedge[(\partial_x e - {^t}\partial_x e)e] = e \wedge \partial_x e e,
\]
implying that
\[
\sigma \dfrac{n}{\omega_c} (I_3 - e\otimes e)\rot_x e = - \sigma \dfrac{n}{\omega_c} \partial_x e e \wedge e = n v_{CD}.
\]
Finally, we obtain
\[
\Divx\left( \dfrac{ne}{\omega_c}\wedge \nabla_x k[n] \right) = \Divx(n\calV[n]) + Be\cdot \nabla_x\left[ \dfrac{\sigma n}{B\omega_c}(e\cdot \rot_x e) \right],
\]
and our conclusion follows.
\end{proof}
\subsection{Notation} Let us introduce several notations used throughout the present work.
For any nonnegative integer $k$ and $p \in [1, \infty]$, $W^{ k,p} = W^{ k,p} (\Omega)$ stands for the $k$-th order $L^p$ Sobolev space. $C_b^k$ stands for $k$ times continuously differentiable functions, whose partial derivatives, up to order $k$, are all bounded and $C^k ([0, T ]; E)$ is the set of $k$-times continuously differentiable functions from an interval $[0, T] \subset \R$ into a Banach space $E$. $L^p(0, T ; E)$ is the set of measurable functions from an interval $(0, T )$ to a Banach space $E$, whose $p$-th power of the $E$-norm is Lebesgue measurable. Finally we denote by $\mathcal{M}(\Omega)$ the set of Randon measures on $\Omega$. 

\section{Main results}
\label{main_result}
\subsection{Renormalized solutions for the Vlasov-Maxwell-Fokker-Planck system with external magnetic field}
We will investigate the long time behavior of the VMFP equations \eqref{equ:VMFP-Scale}-\eqref{equ:Initial} when the external magnetic field becomes strong. Motivated by this, we are looking for global-in-time solutions satisfying local conservation laws and uniform bounds with respect to the magnetic field. The global existence of classical solution of the VMFP system is still an open question in a general framework. Global-in-time existence weak solutions is well constructed but it is not completely satisfactory in this work since it does not ensure the free-energy estimate \eqref{energy_sm} and the local conservation of momentum \eqref{equ:law_moment} in the sense of distributions on $[0,T[\times\R^3$. This is one of the main difficulties in rigorously deriving hydrodynamic models from the VMFP system. Therefore, following \cite{PueSaint2004, BosGou08}, we will establish the results in the framework of renormalized solutions for the VMFP equations as introduced by Diperna and Lions \cite{DipLion}. Their construction yields a solution which satisfies in addition a conservation
law of momentum and a free-energy inequality with defect measures.

We introduce notions of renormalized solutions to the VMFP equations \eqref{equ:VMFP-Scale}-\eqref{equ:Initial} for any fixed $\eps>0$.
\begin{defi}
\label{weak_sol_VMFP}
Let $T>0$. A renormalized solution on the time interval $[0,T]$ is a triple weak solutions
\[
(f^\eps, E^\eps, B^\eps)\in  L^\infty(0,T;L^1\cap L^2(\R^3\times\R^3))\times L^\infty(0,T;L^2(\R^3))^6,
\]
which satisfies equations \eqref{equ:VMFP-Scale}-\eqref{equ:Initial} in the sense of distributions. Moreover, it verifies the local conservation law of mass \eqref{equ:law_mass} (also in the sense of distributions), the local conservation law of momentum in \eqref{equ:law_moment}, and the free-energy estimate \eqref{energy_sm} with defect measures, as follows:
\begin{itemize}
\item The local conservation law of mass in the sense of distributions on $[0,T[\times\R^3$:
\begin{equation}
\label{equ:law_charge_meas}
\partial_t \int_{\R^3} f^\eps \mathrm{d}v + \dfrac{1}{\eps}\Divx\int_{\R^3} v f^\eps \mathrm{d}v =0. 
\end{equation}
\item The local conservation law of the momentum with symmetric nonnegative matrix-valued defect measures $\mu^\eps_E$, $\mu^\eps_B\in L^\infty(0,T;\mathcal{M}(\R^3))^9$, a vector-valued defect measure $\mu^\eps_{EB}\in L^\infty(0,T;\mathcal{M}(\R^3))^3$, and a defect measure $m^\eps=m^\eps(t,x,\sigma)\in L^\infty(0,T;\mathcal{M}(\R^3\times\Sp^2))$
\begin{equation}
\label{equ:law_moment_meas}
\begin{split}
&\eps \partial_t \int_{\R^3} vf^\eps\mathrm{d}v + \Divx \left( \int_{\R^3}v\otimes v f^\eps\mathrm{d}v + \int_{\Sp^2}\sigma\otimes\sigma \mathrm{d}m^\eps\right) -\dfrac{\epsilon_0}{m}\mathcal{A}(E^\eps,E^\eps) - \dfrac{q}{m} D E^\eps  
\\&  \quad - \dfrac{1}{m\mu_0}\mathcal{A}(B^\eps,B^\eps) 
 - \dfrac{\epsilon_0}{m}\Divx\left( \mu^\eps_E - \dfrac{1}{2}tr(\mu^\eps_E)I_3 \right) - \dfrac{1}{m\mu_0}\left(\mu^\eps_B - \dfrac{1}{2}tr(\mu^\eps_B)I_3 \right) \\
&\quad + \dfrac{\epsilon_0\eps}{m}\partial_t (E^\eps\wedge B^\eps) + \dfrac{\epsilon_0\eps}{m}\partial_t \mu^\eps_{EB} - \dfrac{q}{m}\dfrac{1}{\eps}\int_{\R^3} v f^\eps\mathrm{d}v \wedge (B_{\text{ext}}e) \\
&=- \dfrac{1}{\tau}\int_{\R^3} v f^\eps \mathrm{d}v,
\end{split}
\end{equation}
in the sense of distributions on $[0,T[\times\R^3$, where
\begin{equation}
\label{equ:quad_field}
\mathcal{A}(u,u) = \Divx\left(u\otimes u - \dfrac{|u|^2}{2}I_3\right) = u\Divx u - u\wedge \cur u.
\end{equation}
\item The free-energy estimate with the defect measures $\mu^\eps_E$, $\mu^\eps_B$ and $m^\eps$ for all $0<t\leq T$
\begin{equation}
\label{equ:ener_meas}
\begin{split}
& \int_{\R^3}\int_{\R^3}\left(\sigma\ln f^\eps + \dfrac{|v|^2}{2} \right)f^\eps \mathrm{d}v\mathrm{d}x +  \int_{\R^3}\left(\dfrac{\epsilon_0}{2m}|E^\eps|^2 + \dfrac{1}{2\mu_0 m}|B^\eps|^2 \right)\mathrm{d}x\\
&\quad + \dfrac{1}{\tau\eps}\int_0^t\int_{\R^3}\int_{\R^3}\dfrac{|\sigma\nabla_v f^\eps+ vf^\eps|^2}{f^\eps}\mathrm{d}v\mathrm{d}x\mathrm{d}s + \dfrac{1}{2}\int_{\R^3}\int_{\Sp^2}\mathrm{d}m^\eps \\
&\quad+ \dfrac{1}{2}\int_{\R^3} \mathrm{d}\left(\dfrac{\epsilon_0}{m}tr(\mu^\eps_E) + \dfrac{1}{\mu_0 m}tr(\mu^\eps_B)\right) \\
&\leq \int_{\R^3}\int_{\R^3}\left(\sigma\ln f^\eps_0 + \dfrac{|v|^2}{2} \right)f^\eps_0 \mathrm{d}v\mathrm{d}x 
+  \int_{\R^3}\left(\dfrac{\epsilon_0}{2m}|E^\eps_0|^2 + \dfrac{1}{2\mu_0 m}|B^\eps_0|^2 \right)
\mathrm{d}x .
\end{split}
\end{equation}
\end{itemize}
\end{defi}

\subsection{Asymptotic of the $2d\times 3d$ VMFP system}
To rigorously drive the asymptotic model from \eqref{equ:VMFP-Scale}-\eqref{equ:Initial}, let us set our assumptions:
\begin{description}
\item[\textbf{ASSUMPTION 1}]: The external magnetic field only applies in the $x_3$-direction:
\[
B_{\text{ext}} e = (0,0,B_{\text{ext}}(x)),\quad e(x) = (0,0,1).
\]
From the divergence free condition $\Divx( B_{\text{ext}}e) =0$, we deduce that $B_{\text{ext}}(x)$ only depends on $x_1,x_2$.
\item[\textbf{ASSUMPTION 2}]: The plasma is homogeneous in the direction parallel to the applied magnetic field $x_3$. Hence, the distribution function $f^\eps$ and the electromagnetic field $(E^\eps,B^\eps)$ do not depend on $x_3$, as indicated below:
\[
f^\eps:\R^+_t\times\R^2_{x_1,x_2}\times\R^3_v\to\R^+,\quad E^\eps:\R^+_t\times\R^2_{x_1,x_2}\to \R^3,\,\,\text{and}\,\, B^\eps:\R^+_t\times\R^2_{x_1,x_2}\to \R^3.
\]
\end{description}

Under these assumptions, the Vlasov-Fokker-Planck equation \eqref{equ:VMFP-Scale} can be rewritten in two spatial dimensions as follows:
\begin{equation}
\label{equ:VMFP-Scale2dx}
\begin{split}
\eps \partial_t f^\eps + v\cdot\nabla_x f^\eps + \dfrac{q}{m}\left(E^\eps + v\wedge B^\eps + \dfrac{B_{\text{ext}}}{\eps}{^\perp v}\right)\cdot\nabla_v f^\eps = \dfrac{1}{\tau}\Divv(\sigma\nabla_v f^\eps + vf^\eps),
\end{split}
\end{equation}
where $f^\eps = f^\eps(t,x_\perp,v)$ with $x_\perp = (x_1,x_2)$ for any $x=(x_1,x_2,x_3)^t\in\R^3$, and  $^\perp v = (v_2,-v_1,0)$ for any $v=(v_1,v_2,v_3)^t\in\R^3$.

The limit model of \eqref{equ:VMFP-Scale2dx}, \eqref{equ:MaxwellEps}-\eqref{equ:Initial} as $\eps\searrow 0$ becomes as follows:
\begin{equation}
\label{equ:lim_mod_2dx}
\left\{\begin{array}{l}
\partial_t n(t,x_\perp) + \Divx\left( \dfrac{n e}{\omega_c}\wedge k[n] \right)  =0,\quad k[n] = \sigma\dfrac{\nabla_x n}{n} - \dfrac{q}{m}E,\\
e\cdot E =0, \quad E= E(t,x_\perp) = (E_1, E_2,E_3)(t,x_\perp),\\
\epsilon_0 \Divx E = q(n -D),\quad \cur E =0,\\
\mu_0 \epsilon_0 \partial_t E = \cur B_1 - \mu_0 q \dfrac{n e}{\omega_c}\wedge k[n],\quad \Divx B_1 =0,\quad B_1 =B_1(t,x_\perp),
\end{array}\right.
\end{equation}
with initial conditions
\begin{equation}
\label{equ:init_lim_mod_2dx}
\begin{split}
n(0,x_\perp) = n_0(x_\perp) =\int_{\R^3}f(0,x_\perp,v)\mathrm{d}v,\quad E(0,x_\perp) = E^0(x_\perp),\quad B_1(0,x_\perp)=B^0_1(x_\perp),\\
\epsilon_0 \Divx  E^0 = q(n_0 -D),\quad \cur E^0 =0.
\end{split}
\end{equation}
\subsection{Strong convergence for well-prepared initial data}
In the present work, the convergence of the VMFP system \eqref{equ:VMFP-Scale2dx}, \eqref{equ:MaxwellEps}-\eqref{equ:Initial} towards the limit model \eqref{equ:lim_mod_2dx} will be investigated by appealing to the relative entropy or modulated energy method, as introduced in \cite{Yau1991}. To the best of our knowledge, to date, there has been no result on the asymptotic
regime for the Vlasov-Maxwell-Fokker-Planck system with non-uniform external magnetic field. This technique yields strong convergence, provided that the solution of the limit system is regular and that  the initial data converge appropriately. Many asymptotic regimes were obtained using this technique, see \cite{Bre2000, BreMauPue2003, GolSaintQuas2003, PueSaint2004} for quasineutral regimes in collisionless plasma physics, \cite{Saint2003, BerVas2005} for hydrodynamic limits in gaz dynamics, \cite{GouJabVas2004} for fluid-particle interaction, \cite{BosGou08, Bos2007} for high electric or magnetic field limits in plasma physics. 

Before writing our main result, we define the modulated energy $\calE[n^\eps(t)|n(t)]$ by
\begin{equation}
\label{equ:mod_energy}
\begin{split}
\calE[n^\eps(t)|n(t)] = \sigma \int_{\R^2}{n(t) h\left( \dfrac{n^\eps(t)}{n(t)}\right)}\mathrm{d}x_\perp + \dfrac{\epsilon_0}{2m}\int_{\R^2}{|E^\eps -E|^2}\mathrm{d}x_\perp + \dfrac{1}{2\mu_0 m}\int_{\R^2}|B^\eps - \eps B_1|^2\mathrm{d}x_\perp,
\end{split}
\end{equation}
where $h: \R_+ \to \R_+$ is the convex function defined by $h(s) = s\ln s -s +1$, $s\in \R_+$. This quantity splits into the standard $L^2$ norm of the electromagnetic field plus the relative entropy between the particle density $n^\eps$ of \eqref{equ:VMFP-Scale2dx}, \eqref{equ:MaxwellEps}-\eqref{equ:Initial} and the particle concentration $n$ of the limit model \eqref{equ:lim_mod_2dx}. The main result of this paper is the following
\begin{thm}
\label{MainThm}
Let $B_{\text{ext}}$ be a smooth magnetic field, such that $\inf_{x_\perp\in\R^2}B_{\text{ext}}(x_\perp)=B_0 >0$. 
Assume that the initial particle densities $(f^\eps_{0})_{\eps>0}$ and electromagnetic fields $(E^\eps_0,B^\eps_0)_{\eps>0}$ satisfy $f^\eps_{0}\geq 0$, $M_{0}:=\sup_{\eps>0}M^\eps_{0}<+\infty$, $U_{0}:=\sup_{\eps>0}U^\eps_{0}<+\infty$ where
\[
M^\eps _{0} := \int_{\R^2}\int_{\R^3}{f^\eps _{0} (x_\perp,v)}\mathrm{d}v\mathrm{d}x_\perp ,\]
\[
 U^\eps _{0} := \int_{\R^2}\int_{\R^3}{\dfrac{|v|^2}{2}f^\eps _{0} (x_\perp,v)}\mathrm{d}v\mathrm{d}x_\perp + \int_{\R^2}\left(  \dfrac{\epsilon_0}{2m}|E^\eps_0|^2 + \dfrac{1}{2\mu_0 m}|B^\eps_0|^2\right)\mathrm{d}x_\perp.
\]
Let $T>0$. We denote by $(f^\eps,E^\eps, B^\eps)_{\eps>0}$ the solutions of equations \eqref{equ:VMFP-Scale}-\eqref{equ:Initial} in the sense of Definition \ref{weak_sol_VMFP} with initial data $(f^\eps_0,E^\eps_0,B^\eps_0)$ on $[0,T]$, as provided by Theorem \ref{exis_weak_sol}. Let $(n,E,B_1)$ be a unique regular solution of \eqref{equ:lim_mod_2dx} on $[0,T]$ with the initial conditions $(n_0,E^0,B^0_1)$, as constructed in Propositions \ref{existsol_lim_mod1} and \ref{existsol_lim_mod2}. Then, we have the following inequality for $0<\eps<1$ and $t\leq T$:
\begin{equation*}
\begin{split}
\calE[n^\eps(t)|n(t)] +&  \sigma\int_{\R^2}\int_{\R^3}{n^\eps(t) M h\left(\dfrac{f^\eps(t)}{n^\eps(t) M} \right)}\mathrm{d}v\mathrm{d}x_\perp + \dfrac{1}{2\eps\tau}\int_0^t\int_{\R^2}\int_{\R^3}{\dfrac{|\sigma \nabla_v f^\eps + v f^\eps|^2}{f^\eps}}\mathrm{d}v\mathrm{d}x_\perp\mathrm{d}s\\
& +  \dfrac{1}{2}\int_{\R^2}\int_{\Sp^2}\mathrm{d}m^\eps+
\dfrac{1}{2}\int_{\Omega} \mathrm{d}\left( \dfrac{\epsilon_0}{m}tr(\mu^\eps_E) + \dfrac{\mu_0}{m}tr(\mu^\eps_B)\right)\\
&\leq  \left[\calE[n^\eps_0|n_0] +  \sigma\int_{\R^2}\int_{\R^3}{n^\eps_0 M(v) h\left(\dfrac{f^\eps_0}{n^\eps_0 M(v)} \right)}\mathrm{d}v\mathrm{d}x_\perp + C \sqrt{\eps}\right]e^{C t},
\end{split}
\end{equation*}
where $n^\eps_{0} = \int_{\R^3}{f^\eps_0}\mathrm{d}v$, and $C>0$ is a constant that is independent of $\eps$.
In particular, if
\[
\lim_{\eps\searrow 0}\sigma \int_{\R^2}\int_{\R^3}{n^\eps_0}M(v) h\left(\dfrac{f^\eps_0}{n^\eps_0 M(v)} \right)\mathrm{d}v\mathrm{d}x_\perp =0,\quad \lim_{\eps\searrow 0}\calE[n^\eps_{0}|n_{0}] =0 ,
\]
then we obtain 
\[
\lim_{\eps\searrow 0} \sup_{0 \leq t \leq T} \sigma \int_{\R^2}\int_{\R^3}{n^\eps(t)M(v)h\left(\dfrac{f^\eps}{n^\eps M} \right)}\mathrm{d}v\mathrm{d}x_\perp = 0,\quad \lim_{\eps\searrow 0} \sup_{0 \leq t \leq T}\calE[n^\eps(t)|n(t)] =0,
\]
\[
\lim_{\eps\searrow 0}\dfrac{1}{\eps \tau}\int_0^T\int_{\R^2}\int_{\R^3}{\dfrac{|\sigma \nabla_v f^\eps + v f^\eps |^2}{f^\eps}} \mathrm{d}v\mathrm{d}x_\perp\mathrm{d}t =0.
\]
Moreover, we have the convergences $\lim_{\eps\searrow 0} f^\eps = nM$ in $L^\infty (0,T;L^1(\R^2\times \R^3))$, and $\lim_{\eps\searrow 0} E^\eps = E$ and $\lim_{\eps\searrow 0} B^\eps -\eps B_1 =0$ in $L^\infty(0,T;L^2(\R^2))$.
\end{thm}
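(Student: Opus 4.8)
The plan is to run the modulated energy (relative entropy) method, comparing the renormalized solution $(f^\eps,E^\eps,B^\eps)$ of \eqref{equ:VMFP-Scale2dx}, \eqref{equ:MaxwellEps}--\eqref{equ:Initial} with the smooth solution $(n,E,B_1)$ of \eqref{equ:lim_mod_2dx}. The first observation is that the two relative entropies in the statement collapse to a single one: since $h(s)=s\ln s - s + 1$ and $\int_{\R^3}M\,\md v = 1$, a direct computation gives
\[
\sigma\int_{\R^2}\!\int_{\R^3} n^\eps M\, h\!\left(\tfrac{f^\eps}{n^\eps M}\right)\md v\,\md x_\perp + \sigma\int_{\R^2} n\, h\!\left(\tfrac{n^\eps}{n}\right)\md x_\perp = \sigma\int_{\R^2}\!\int_{\R^3} nM\, h\!\left(\tfrac{f^\eps}{nM}\right)\md v\,\md x_\perp .
\]
Accordingly I would study the single modulated functional
\[
H^\eps(t)=\sigma\int_{\R^2}\!\int_{\R^3} nM\,h\!\left(\tfrac{f^\eps}{nM}\right)\md v\,\md x_\perp+\frac{\epsilon_0}{2m}\int_{\R^2}|E^\eps-E|^2\,\md x_\perp+\frac{1}{2\mu_0 m}\int_{\R^2}|B^\eps-\eps B_1|^2\,\md x_\perp,
\]
and aim at the differential inequality $\frac{\md}{\md t}H^\eps \le C\,H^\eps + C\sqrt{\eps}$, valid modulo the nonnegative dissipative and defect contributions, which then closes by Gronwall's lemma.

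First I would expand $H^\eps$ by adding and subtracting the limit fields and by expanding the logarithm, writing $H^\eps$ as the physical free energy of $(f^\eps,E^\eps,B^\eps)$, controlled by \eqref{equ:ener_meas}, plus the smooth self-energy of $(n,E,\eps B_1)$, minus the cross terms linear in $(n,E,B_1)$. The free-energy estimate \eqref{equ:ener_meas} then delivers \emph{for free}, with the correct signs, exactly the quantities on the left of the claimed inequality: the entropy dissipation $\frac{1}{\eps\tau}\int\frac{|\sigma\nabla_v f^\eps + vf^\eps|^2}{f^\eps}$, the measure $\frac12\int_{\Sp^2}\md m^\eps$, and the traces $\frac12\int\md\big(\frac{\epsilon_0}{m}\mathrm{tr}(\mu^\eps_E)+\frac{1}{\mu_0 m}\mathrm{tr}(\mu^\eps_B)\big)$. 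What remains is to differentiate the cross terms and the limit self-energy in time and to show that the outcome is $\le C H^\eps + C\sqrt{\eps}$.

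The heart of the argument is the evolution of these cross terms. For the entropy part I would use the local mass conservation \eqref{equ:law_charge_meas} for $n^\eps$ together with the continuity equation in \eqref{equ:lim_mod_2dx}; for the field part I would use the scaled Maxwell system \eqref{equ:MaxwellEps} and the limit relations in \eqref{equ:lim_mod_2dx}. The singular field contributions $\frac{1}{\mu_0\eps}(E^\eps-E)\cdot\cur B^\eps$ and $-\frac{1}{\mu_0\eps}(B^\eps-\eps B_1)\cdot\cur E^\eps$ combine into a Poynting-type divergence and, using $\cur E = 0$ and $\Divx B_1 = 0$ as in Proposition \ref{BalLiMod}, cancel up to lower order. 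The genuinely delicate term is the singular cyclotronic force: differentiating the modulated energy produces the current $\int_{\R^3} v f^\eps\,\md v$ against the $O(1/\eps)$ factor $\frac{q}{m}\frac{1}{\eps}\int_{\R^3} vf^\eps\,\md v\wedge B_{\text{ext}}e$. Here I would invoke the renormalized momentum balance \eqref{equ:law_moment_meas} to trade this singular current for the stress tensor $\int v\otimes v f^\eps$, the fields, the friction $-\frac{1}{\tau}\int vf^\eps$, and the $\eps\partial_t$ and defect terms; decomposing the current into its components parallel and perpendicular to $e$ and using $B_{\text{ext}}\ge B_0>0$ to invert $u\mapsto u\wedge e$ on $e^\perp$, the perpendicular current is driven toward the drift flux $\frac{ne}{\omega_c}\wedge k[n]$ of \eqref{equ:lim_mod_2dx}, with the remainder matched against the structure of the limit model as in Proposition \ref{Equiv_form}. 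The residual discrepancy between $\int vf^\eps\,\md v$ and its local-equilibrium value is then controlled by Cauchy--Schwarz against the entropy dissipation: since $\int_{\R^3} vf^\eps\,\md v = \int_{\R^3}(\sigma\nabla_v f^\eps + vf^\eps)\,\md v$, one has $|\int vf^\eps\,\md v|\le (n^\eps)^{1/2}\big(\int\frac{|\sigma\nabla_v f^\eps+vf^\eps|^2}{f^\eps}\md v\big)^{1/2}$, and because \eqref{equ:ener_meas} bounds the time integral of the dissipation by $C\eps$, every pairing of the current with a smooth $O(1)$ coefficient integrates to $O(\sqrt{\eps})$ while its square is absorbed into the dissipation on the left.

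I expect the main obstacle to be precisely this current-matching step: handling the $O(1/\eps)$ magnetic force in the presence of the defect measures and extracting the $\sqrt{\eps}$ rate cleanly. One must verify that every non-dissipative remainder is either (i) quadratic in the differences $(n^\eps-n,E^\eps-E,B^\eps-\eps B_1)$ with smooth coefficients --- hence bounded by $C H^\eps$, using the regularity of $(n,E,B_1)$ supplied by Propositions \ref{existsol_lim_mod1} and \ref{existsol_lim_mod2} and the confinement bound $B_{\text{ext}}\ge B_0$ --- or (ii) a genuine $O(\sqrt{\eps})$ term produced by the dissipation via Cauchy--Schwarz; the convexity of $h$ together with a Csisz\'ar--Kullback--Pinsker inequality guarantees that the relative-entropy terms dominate the $L^1$ distances needed in (i). Once $\frac{\md}{\md t}H^\eps \le C H^\eps + C\sqrt{\eps}$ is established, Gronwall's lemma yields the stated inequality. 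Letting $\eps\searrow 0$ with well-prepared data forces $H^\eps(t)\to 0$ uniformly on $[0,T]$; the Csisz\'ar--Kullback--Pinsker inequality then upgrades the entropy convergence to $f^\eps\to nM$ in $L^\infty(0,T;L^1(\R^2\times\R^3))$, while the vanishing of the modulated field energy gives directly $E^\eps\to E$ and $B^\eps-\eps B_1\to 0$ in $L^\infty(0,T;L^2(\R^2))$.
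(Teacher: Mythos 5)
Your proposal is correct and follows essentially the same route as the paper: modulated energy controlled via the free-energy estimate \eqref{equ:ener_meas}, the renormalized momentum balance \eqref{equ:law_moment_meas} to convert the $O(1/\eps)$ cyclotronic force into the stress tensor, fields, friction and defect measures, Cauchy--Schwarz against the dissipation (using $\int_{\R^3} vf^\eps\,\mathrm{d}v=\int_{\R^3}(\sigma\nabla_v f^\eps+vf^\eps)\,\mathrm{d}v$) to extract the $\sqrt{\eps}$ rate, Gronwall, and Csisz\'ar--Kullback. The only packaging difference is that you merge the fluid relative entropy and the kinetic relative entropy into the single functional $\sigma\int nM\,h(f^\eps/(nM))$ --- a correct identity --- whereas the paper keeps them separate and only feeds $\calE[n^\eps|n]$ into the Gronwall loop; both lead to the stated conclusion.
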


\begin{remark}
The assumption on the initial data is necessary with the method we use,
which also requires the smoothness of the solution of the limit equations. In some sense this assumption means that
the initial state is already close to the shape of the limit.
\end{remark}

The remainder of the paper is organized as follows. In Section \ref{Prelim}, we establish some a priori estimates for the three-dimensional VPFP system. In Section \ref{lim_mod_2d}, we investigate the well-posedness of the limiting model in two spatial dimensions. Finally, in Section \ref{Conve}, we rigorously prove the convergence towards the asymptotic model.

\section{Preliminaries}
\label{Prelim}
We now state the result on the global existence solution to the $3d\times 3d$ VMFP system:
\begin{thm}
\label{exis_weak_sol}
Let $T>0$ and $B_{\text{ext}}\in L^\infty(\R^3)$. For fixed $\eps>0$, let $f^\eps_0\in L^1\cap L^2(\R^3\times\R^3)$ be an nonnegative function, and $(E^\eps_0,B^\eps_0)\in L^2(\R^3)^2$ be two vector fields satisfying the energy bound
\begin{equation}
\label{equ:init_ener_bound}
 \int_{\R^3}\int_{\R^3}\left(\sigma |\ln f^\eps_0 | + \dfrac{|v|^2}{2} \right)f^\eps_0 \mathrm{d}v\mathrm{d}x 
+  \int_{\R^3}\left(\dfrac{\epsilon_0}{2m}|E^\eps_0|^2 + \dfrac{1}{2\mu_0 m}|B^\eps_0|^2 \right)
\mathrm{d}x <+\infty.
\end{equation}
Then there exists a weak solution $(f^\eps, E^\eps, B^\eps)$ to \eqref{equ:VMFP-Scale}-\eqref{equ:Initial} in the sense of Definition \ref{weak_sol_VMFP} which satisfies the local conservation laws \eqref{equ:law_charge_meas}-\eqref{equ:law_moment_meas}, and the free-energy estimate \eqref{equ:ener_meas}.
\end{thm}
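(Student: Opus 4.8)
The plan is to construct the solution by a regularization-and-compactness argument in the DiPerna--Lions framework \cite{DipLion}, adapted, as in \cite{PueSaint2004, BosGou08}, to track the defect measures demanded by Definition \ref{weak_sol_VMFP}. First I would introduce an approximate system indexed by $n$: regularize the datum $f^\eps_0$ (truncate and mollify so that it is smooth, bounded, and bounded away from zero on compacts), and regularize the force by convolving $(E_n,B_n)$ with a spatial mollifier and cutting off large velocities in the Lorentz term. With a smoothed coupling the characteristics of the Vlasov--Fokker--Planck operator are globally defined and Lipschitz, so a fixed-point argument on the linear Maxwell system \eqref{equ:MaxwellEps}, whose propagator is bounded on $L^2$, yields classical solutions $(f_n,E_n,B_n)$ on $[0,T]$.

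The next step is to derive uniform a priori bounds. Reproducing the computation behind \eqref{energy_sm} at the regularized level controls, uniformly in $n$, the mass $\|f_n\|_{L^\infty(0,T;L^1)}$, the entropy and kinetic energy $\intvxt{(\sigma|\ln f_n|+|v|^2/2)f_n}$, the field energy $\|E_n\|_{L^\infty(0,T;L^2)}^2+\|B_n\|_{L^\infty(0,T;L^2)}^2$, and the Fokker--Planck dissipation $\frac{1}{\tau\eps}\inttvxt{|\sigma\nabla_v f_n+v f_n|^2/f_n}$. Together with an $L^2$ estimate (obtained by multiplying by $f_n$), these yield the tightness and equi-integrability needed to extract, via Dunford--Pettis, a weak-$L^1$ limit $f_n\rightharpoonup f^\eps$ and weak-$\ast$ limits $E_n\rightharpoonup E^\eps$, $B_n\rightharpoonup B^\eps$ in $L^\infty(0,T;L^2)$. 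A velocity-averaging lemma then upgrades the moments $\rho_n=\intvt{f_n}$ and $j_n=\intvt{v f_n}$ to strong $L^1_{\mathrm{loc}}$ compactness, which is precisely what permits passage to the limit in the current $J^\eps$ and the charge density in \eqref{equ:MaxwellEps}--\eqref{equ:GaussEps}, and which makes the local mass law \eqref{equ:law_charge_meas} pass directly to the limit.

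To pass to the limit in the Vlasov equation I would invoke renormalization: for admissible $\beta$, each $f_n$ satisfies $\eps\partial_t\beta(f_n)+v\cdot\nabla_x\beta(f_n)+\frac{q}{m}(E_n+v\wedge(B_n+B_{\text{ext}}e/\eps))\cdot\nabla_v\beta(f_n)=\beta'(f_n)\mathcal{Q}(f_n)$, an identity stable under the weak convergence of the field because $\beta(f_n)$ is bounded in $L^\infty$; combined with the strong compactness of the moments this identifies $f^\eps$ as a renormalized solution. The genuinely hard step, and the source of the defect measures, is that the quadratic terms in the momentum balance \eqref{equ:law_moment_meas} and the energy \eqref{equ:ener_meas} are not stable under merely weak convergence. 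Since Maxwell's equations provide no elliptic regularization of $(E_n,B_n)$, in contrast to the Poisson case, the products $E_n\otimes E_n$, $B_n\otimes B_n$ and the kinetic stress $\intvt{v\otimes v f_n}$ converge only weakly-$\ast$ to measures dominating the products of the limits; I would define the symmetric nonnegative defect measures $\mu^\eps_E,\mu^\eps_B$, the cross measure $\mu^\eps_{EB}$, and the kinetic defect $m^\eps$ on $\R^3\times\Sp^2$ as the weak-$\ast$ limits of the corresponding differences. Lower semicontinuity of the convex energy functional then delivers \eqref{equ:ener_meas} with the correct sign, while inserting these measures into the limit of the regularized momentum identity produces \eqref{equ:law_moment_meas}. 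The main obstacle throughout is this bookkeeping of defect measures---verifying their nonnegativity and matrix structure and matching the singular parts of the kinetic and electromagnetic stresses---while removing the regularization consistently across the coupled Vlasov and Maxwell parts.
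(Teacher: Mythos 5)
Your proposal follows the same overall architecture as the paper: mollify the Lorentz force and the current with a kernel $K_\delta$, solve the regularized system, extract uniform bounds from the free-energy balance, pass to the limit, and encode the loss of compactness in the quadratic electromagnetic and kinetic stresses by the defect measures $\mu^\eps_E$, $\mu^\eps_B$, $\mu^\eps_{EB}$, $m^\eps$, exactly as in the DiPerna--Lions / Puel--Saint-Raymond / Bostan--Goudon line you cite. Two points differ in the execution. First, for the well-posedness of the regularized system the paper does not use an abstract fixed point: it runs an explicit iteration, solving the linear Maxwell system by Kato's theorem for symmetric hyperbolic systems and the linear Vlasov--Fokker--Planck equation by Degond's results, and closes the loop by short-time uniform bounds plus Aubin's lemma. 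Second, and more substantively, the compactness step in the paper does not rely on velocity averaging: it invokes the Bouchut--Dolbeault compactness lemma for the hypoelliptic operator $\partial_t+v\cdot\nabla_x-\Divv(v\,\cdot)-\Delta_v$, which yields strong $L^1_{\mathrm{loc}}$ compactness of $f^\delta$ itself (not only of its velocity averages); this is natural here because the Fokker--Planck diffusion is present, whereas averaging lemmas are the right tool for the collisionless Vlasov--Maxwell system. Your route via averaging would also work for the moments, but note one point you gloss over: to pass to the limit in the products $E_n\,\rho_n$ and $j_n\wedge B_n$ against fields that converge only weakly in $L^2$, strong $L^1_{\mathrm{loc}}$ convergence of the moments is not enough; the paper upgrades the moment convergence to strong $L^2((0,T)\times B_R)$ via the uniform $L^2(\R^3\times\R^3)$ bound on $f^\delta$, Dunford--Pettis and Vitali, and you should make the analogous upgrade explicit. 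With that caveat your argument is essentially a correct variant of the paper's proof.
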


The proof of Theorem \ref{exis_weak_sol}, concerning the global existence of solutions to the VMFP equations with external magnetic field, follows the strategy from \cite{DipLion}. It starts from a regularized system that admits global solutions, for which the energy estimate provides the requisite uniform bounds. These bounds allow us to apply compactness arguments based on the compactness  in $L^1$ for hypoelliptic operators (see \cite{BouDol1995}). In this Section, we only show that the construction yields a solution which satisfies the local conservation law of momentum \eqref{equ:law_moment_meas} and the free-energy inequality \eqref{equ:ener_meas}. The existence result is deferred to Appendix, where we outline the main steps of the argument. 
\subsection{A priori estimates}
\label{Prio_est}
Consider a sequence $(f^\eps_\delta, E^\eps_\delta,B^\eps_\delta)_{\delta >0}$ of approximate solutions of \eqref{equ:VMFP-Scale}-\eqref{equ:Initial} (here $\eps$ is kept fixed), as constructed follows:
\begin{equation}
\label{equ:regu_VMFPEps}
\begin{split}
\left\{\begin{array}{l}
\eps \partial_t f^\eps_\delta + v\cdot\nabla_x f^\eps_\delta + \dfrac{q}{m}\left( K_\delta (E^\eps_\delta + v\wedge B^\eps_\delta) + v\wedge \dfrac{B_{\text{ext}}e}{\eps} \right)\cdot \nabla_v f^\eps_\delta = \dfrac{1}{\tau}\Divv\left(\sigma\nabla_v f^\eps_\delta + v f^\eps_\delta \right),\\
\mu_0 \epsilon_0 \eps \partial_t E^\eps_\delta = \cur B^\eps_\delta - \mu_0 K_\delta J^\eps_\delta,\quad \eps \partial_t B^\eps_\delta =- \cur E^\eps_\delta,\\
\epsilon_0 \Divx E^\eps_\delta = q (n^\eps_\delta -D),\quad \Divx B^\eps_\delta =0,\\
f^\eps_\delta(0,x,v) = f^\eps_0(x,v),\quad E^\eps_\delta(0,x) = E^\eps_0(x),\quad B^\eps_\delta(0,x) = B^\eps_0(x).
\end{array}\right.  
\end{split}
\end{equation}
With a slight abuse of notation, the operator $K_\delta$ is a convolution operator defined by
\[
K_\delta f(x) = \int_{\R^3} K_\delta (x-y)f(y)\mathrm{d}y,\quad f\in L^1_{\text{loc}}(\R^3),
\]
where $K_\delta$ is a standard mollifier satisfying:
\begin{align*}
K_\delta\in C^\infty_c(\R^3),\quad \text{supp}K_\delta \subset \bar{B}_\delta,\quad \int_{\R^3} K_\delta \mathrm{d}x =1,\\
K_\delta (x)\geq 0 \quad \text{and}\quad K_\delta(x) = K_\delta(-x)\quad \text{for}\,\,x\in\R^3.
\end{align*}
From $K_\delta(x) = K_\delta(-x)$, it follows that the operator $K_\delta$ is self-adjoint with respect to the $L^2$ scalar product.

Proposition \ref{regu_solu} in Appendix implies the following energy estimate
\[
\sup_{t\in[0,T],\delta>0} \left( \int_{\R^3}\int_{\R^3} |v|^2 f^\eps_\delta(t,x,v)\mathrm{d}x\mathrm{d}v + \int_{\R^3}\dfrac{\epsilon_0}{2m}|E^\eps_\delta|^2 + \dfrac{1}{2\mu_0 m}|B^\eps_\delta|^2\mathrm{d}x\right) <+\infty,
\]
as well as the estimate of $L^2$ norm
\[
\sup_{t\in[0,T],\delta>0}\int_{\R^3}\int_{\R^3}|f^\eps_\delta|^2(t,x,v)\mathrm{d}v\mathrm{d}x <+\infty.
\]
Then, up to extraction of a subsequence (still indexed by $\delta$)
\begin{align*}
f^\eps_\delta \rightharpoonup f^\eps,\quad w \star-L^\infty(0,T;L^2(\R^3\times\R^3),\\
(E^\eps_\delta, B^\eps_\delta)\rightharpoonup (E^\eps,B^\eps),\quad w\star - L^\infty(0,T;L^2(\R^3))^6.
\end{align*}
Therefore (after extraction eventually) there are symmetric nonnegative matrix-valued defect measures $\mu_E^\eps,\mu_B^\eps\in L^\infty(0,T;\mathcal{M}(\R^3))^9$, $\mu_{EB}^\eps\in L^\infty(0,T;\mathcal{M}(\R^3))^3$ such that for any $\varphi \in C^0([0,T]\times\R^3)$
\begin{equation*}
\begin{split}
\lim_{\delta\to 0} \int_0^T\int_{\R^3}(E^\eps_\delta\otimes E^\eps_\delta) \varphi(t,x)\mathrm{d}x\mathrm{d}t = \int_0^T\int_{\R^3}(E^\eps\otimes E^\eps) \varphi(t,x)\mathrm{d}x\mathrm{d}t + \int_0^T \int_{\R^3}\varphi(t,x)\mathrm{d}\mu_E^\eps,\\
\lim_{\delta\to 0} \int_0^T\int_{\R^3}(B^\eps_\delta\otimes B^\eps_\delta) \varphi(t,x)\mathrm{d}x\mathrm{d}t = \int_0^T\int_{\R^3}(B^\eps\otimes B^\eps) \varphi(t,x)\mathrm{d}x\mathrm{d}t + \int_0^T \int_{\R^3}\varphi(t,x)\mathrm{d}\mu_B^\eps,\\
\lim_{\delta\to 0} \int_0^T\int_{\R^3}(E^\eps_\delta\wedge B^\eps_\delta) \varphi(t,x)\mathrm{d}x\mathrm{d}t = \int_0^T\int_{\R^3}(E^\eps\wedge B^\eps) \varphi(t,x)\mathrm{d}x\mathrm{d}t + \int_0^T \int_{\R^3}\varphi(t,x)\mathrm{d}\mu_{EB}^\eps .
\end{split}
\end{equation*}
Moreover, from the Cauchy-Schwarz inequality, we deduce that
\[
\int_{\R^3} \mathrm{d}\mu_{EB}^\eps \leq \dfrac{1}{2}\int_{\R^3} tr(\mathrm{d}\mu_E^\eps) + tr(\mathrm{d}\mu_B^\eps).
\]
In addition, the energy estimate above implies that the sequence $(f^\eps_\delta |v|^2)_{\delta >0}$ is bounded in $L^\infty(0,T;\mathcal{M}(\R^3\times\R^3))$. Thus the sequence $\nu^\eps_\delta = \int_0^\infty r^2 f^\eps_\delta(t,x,r\sigma)r^2\mathrm{d}r$ of push-forwards of $f^\eps_\delta$ under the map $(t,x,v)\mapsto (t,x,\sigma = {v}/{|v|})$ is bounded in $L^\infty(0,T;\mathcal{M}(\R^3\times\Sp^2))$. Hence (after extraction eventually) there exists a nonnegative measure $m^\eps \in  L^\infty(0,T;\mathcal{M}(\R^3\times\Sp^2))$ such that
\begin{equation}
\label{equ:defect_meas}
\begin{split}
&\lim_{\delta\to 0}\int_0^T\int_{\R^3}\int_{\R^3} \psi\left(t,x,\dfrac{v}{|v|}\right)|v|^2 f^\eps_\delta(t,x,v)\mathrm{d}v\mathrm{d}x\mathrm{d}t\\
&= \int_0^T\int_{\R^3}\int_{\R^3} \psi\left(t,x,\dfrac{v}{|v|}\right)|v|^2 f^\eps(t,x,v)\mathrm{d}v\mathrm{d}x\mathrm{d}t  + \int_0^T\int_{\R^3}\int_{\Sp^2} \psi(t,x,\sigma)\mathrm{d}m^\eps(t,x,\sigma), 
\end{split}
\end{equation}
for any $\psi\in C^0([0,T]\times\R^3\times\Sp^2)$. Taking now $\psi(t,x,\sigma)=\theta(t,x)\sigma\otimes\sigma$ and $\psi(t,x,\sigma) = \frac{\theta(t,x)}{2}$, respectively in \eqref{equ:defect_meas}, we deduce that
\begin{equation*}
\begin{split}
\lim_{\delta\to 0}\int_{\R^3} (v\otimes v) f^\eps_\delta \mathrm{d}v &= \int_{\R^3} (v\otimes v) f^\eps\mathrm{d}v + \int_{\Sp^2}\sigma\otimes\sigma\mathrm{d}m^\eps,\\
\lim_{\delta\to 0}\int_{\R^3} \dfrac{|v|^2}{2}f^\eps_\delta \mathrm{d}v &= \int_{\R^3} \dfrac{|v|^2}{2} f^\eps\mathrm{d}v + \dfrac{1}{2}\int_{\Sp^2}\mathrm{d}m^\eps,
\end{split}
\end{equation*}
in the sense of distributions on $[0,T[\times\R^3$.
\subsection{The local conservation of momentum with the defect measures}
The local conservation of momentum holds rigorously in the sense of distributions on $[0,T[\times\R^3$ for approximate solutions $(f^\eps_\delta,E^\eps_\delta,B^\eps_\delta)$ (see Proposition \ref{regu_solu}):
\begin{equation}
\label{equ:moment_approx_sol}
\begin{split}
&\eps \partial_t \int_{\R^3} vf^\eps_\delta\mathrm{d}v + \Divx \int_{\R^3} v\otimes v f^\eps_\delta \mathrm{d}v - \dfrac{q}{m}(K_\delta E^\eps_\delta n^\eps_\delta + \int_{\R^3}v f^\eps_\delta\mathrm{d}v \wedge K_\delta B^\eps_\delta) \\
&- \dfrac{q}{m}\dfrac{1}{\eps}\int_{\R^3}vf^\eps_\delta\mathrm{d}v\wedge( B_{\text{ext}}e )
= -\dfrac{1}{\tau}\int_{\R^3}vf^\eps_\delta\mathrm{d}v.
\end{split}
\end{equation}
We need to compute $ \frac{q}{m}(E^\eps_\delta n^\eps_\delta + (K_\delta\int_{\R^3}vf^\eps_\delta\mathrm{d}v)\wedge B^\eps_\delta)$. From the Maxwell equations in system \eqref{equ:regu_VMFPEps}, we deduce that
\begin{equation*}
\begin{split}
q  E^\eps_\delta n^\eps_\delta &= E^\eps_\delta(\epsilon_0 \Divx E^\eps_\delta + qD),\\
 q \left(K_\delta\int_{\R^3}vf^\eps_\delta\mathrm{d}v\right) \wedge  B^\eps_\delta & = K_\delta J^\eps_\delta \wedge B^\eps_\delta= - \epsilon_0 \eps\partial_t E^\eps_\delta \wedge B^\eps_\delta + \dfrac{1}{\mu_0}\cur B^\eps_\delta \wedge  B^\eps_\delta.
\end{split}
\end{equation*}
Using the identity \eqref{equ:quad_field}, we easily deduce that
\begin{align*}
\epsilon_0 \Divx E^\eps_\delta E^\eps_\delta  &= \epsilon_0 E^\eps_\delta\wedge \cur E^\eps_\delta + \epsilon_0\mathcal{A}(E^\eps_\delta,E^\eps_\delta))\\
&= -\epsilon_0 \eps E^\eps_\delta \wedge \partial_t B^\eps_\delta + \epsilon_0 \mathcal{A}(E^\eps_\delta,E^\eps_\delta),
\end{align*}
and 
\begin{align*}
\cur B^\eps_\delta \wedge B^\eps_\delta = \mathcal{A}(B^\eps_\delta,B^\eps_\delta).
\end{align*}
Replacing in the previous identities, we obtain
\begin{align*}
\dfrac{q}{m}(E^\eps_\delta n^\eps_\delta + K_\delta\int_{\R^3}vf^\eps_\delta\mathrm{d}v\wedge B^\eps_\delta) = \dfrac{q}{m} D E^\eps_\delta + \dfrac{\epsilon_0}{m} \mathcal{A}(E^\eps_\delta,E^\eps_\delta) + \dfrac{\mu_0}{m}\mathcal{A}(B^\eps_\delta,B^\eps_\delta)   - \dfrac{\epsilon_0\eps}{m}\partial_t (E^\eps_\delta\wedge B^\eps_\delta).
\end{align*}
Inserting this identity into \eqref{equ:moment_approx_sol}, then taking limits in the  equation and using the defect measures introduced in Subsection \ref{Prio_est}, together with the fact that
\[
K_\delta E^\eps_\delta n^\eps_\delta  - E ^\eps_\delta n^\eps_\delta \to 0,\quad j^\eps_\delta \wedge K_\delta B^\eps_\delta - K_\delta j^\eps_\delta\wedge B^\eps_\delta \to 0\quad\text{in}\quad\mathcal{D}'((0,T)\times\R^3),
\]
 we conclude that the weak solution $(f^\eps,E^\eps,B^\eps)$ satisfies \eqref{equ:law_moment_meas}. 
\subsection{The free-energy estimate with the defect measures}
From Proposition \ref{regu_solu}, the approximate solutions $(f^\eps_\delta,E^\eps_\delta,B^\eps_\delta)$ satisfies the balance of free-energy. Taking the limit and using the defect measures, the limit solutions $(f^\eps,E^\eps,B^\eps)$ satisfy the free-energy estimate \eqref{equ:ener_meas}.

Now, we establish uniform bounds for the kinetic energy with the defect measures.
\begin{lemma}
\label{KinEne}
Let $T>0$. Assume that the initial data $(f^\eps_{0}, E^\eps_0,B^\eps_0)$ satisfy $f^\eps _{0} \geq 0$, $M_{0}:= \sup_{\eps >0} M^\eps _{0} < +\infty$, $U_{0} := \sup_{\eps >0} U^\eps _{0} < +\infty$, where for any $\eps >0$
\begin{align*}
M^\eps _{0} &:= \int_{\R^3}\int_{\R^3}{f^\eps _{0} (x,v)}\mathrm{d}v\mathrm{d}x,\\
U^\eps _{0} &:= \int_{\R^3}\int_{\R^3}{\dfrac{|v|^2}{2}f^\eps _{0} (x,v)}\mathrm{d}v\mathrm{d}x + \dfrac{\epsilon_0}{2m}\int_{\R^3}{|E^\eps_0|^2}\mathrm{d}x + \dfrac{1}{2\mu_0 m}\int_{\R^3}|B^\eps_0|^2\mathrm{d}x.
\end{align*}
We assume that $(f^\eps,E^\eps,B^\eps)_{\eps>0}$ are weak solutions of \eqref{equ:VMFP-Scale}-\eqref{equ:Initial}. Then we have 
\begin{equation*}
\begin{split}
\eps \sup_{0\leq t\leq T} \left\{ \int_{\R^3}\int_{\R^3}{\dfrac{|v|^2}{2}f^\eps}\mathrm{d}v\mathrm{d}x + \int_{\R^3}\left( \dfrac{\epsilon_0}{2m}{|E^\eps|^2} + \dfrac{1}{2\mu_0 m}|B^\eps|^2\right)\mathrm{d}x\right\} \leq \eps U_{0} + \dfrac{3\sigma}{\tau}TM_{0},
\end{split}
\end{equation*}
and
\[
\dfrac{1}{\tau} \int_0^T\int_{\R^3}\int_{\R^3}{|v|^2 f^\eps(t,x,v)}\mathrm{d}v\mathrm{d}x\mathrm{d}t \leq \eps U_{0} + \dfrac{3\sigma}{\tau}TM_{0}.
\]
\end{lemma}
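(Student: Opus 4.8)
The plan is to bypass the free-energy estimate \eqref{equ:ener_meas}: its entropy contribution $\sigma\int\int f^\eps\ln f^\eps$ has no definite sign and cannot be used to isolate the kinetic energy. Instead I would derive a \emph{pure} kinetic-plus-field energy balance, which is precisely what generates the constant $\frac{3\sigma}{\tau}$ and the linear-in-$T$ term in the statement. I would carry out the computation on the regularized solutions $(f^\eps_\delta,E^\eps_\delta,B^\eps_\delta)$ of \eqref{equ:regu_VMFPEps}, which are smooth and decaying enough (Proposition \ref{regu_solu}) to make every integration by parts legitimate, and pass to the limit $\delta\to0$ only at the end.

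First I would multiply the Vlasov--Fokker--Planck equation in \eqref{equ:regu_VMFPEps} by $\frac{|v|^2}{2}$ and integrate over $(x,v)$. The transport term becomes a pure $x$-divergence and drops; the two magnetic contributions $v\wedge K_\delta B^\eps_\delta$ and $\frac{B_{\text{ext}}}{\eps}{}^\perp v$ vanish after integrating by parts in $v$, because $v\cdot(v\wedge b)=0$ and $\Divv(v\wedge b)=0$ for any $b=b(x)$; the electric part leaves a coupling term $-\frac{q}{m}\int K_\delta E^\eps_\delta\cdot j^\eps_\delta\,\mathrm{d}x$. Using $\Divv v=3$, the Fokker--Planck operator contributes exactly $\frac{3\sigma}{\tau}\int\int f^\eps_\delta-\frac{1}{\tau}\int\int|v|^2 f^\eps_\delta$. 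In parallel I would form the field-energy balance from the two Maxwell equations: the curl terms combine into the Poynting flux $\Divx(E^\eps_\delta\wedge B^\eps_\delta)$, which integrates to zero, while the current term yields $-\frac{q}{m}\int E^\eps_\delta\cdot K_\delta j^\eps_\delta\,\mathrm{d}x$ after multiplying by $\eps$ and dividing by $m$.

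Adding the two balances, the self-adjointness of $K_\delta$ (guaranteed by $K_\delta(x)=K_\delta(-x)$) turns $\int K_\delta E^\eps_\delta\cdot j^\eps_\delta$ into $\int E^\eps_\delta\cdot K_\delta j^\eps_\delta$, so the two coupling terms cancel exactly and I am left with
\begin{equation*}
\eps\frac{\mathrm{d}}{\mathrm{d}t}\mathcal{W}^\eps_\delta(t)+\frac{1}{\tau}\int_{\R^3}\int_{\R^3}|v|^2 f^\eps_\delta\,\mathrm{d}v\,\mathrm{d}x=\frac{3\sigma}{\tau}M^\eps_0,
\end{equation*}
where $\mathcal{W}^\eps_\delta(t)=\int\int\frac{|v|^2}{2}f^\eps_\delta+\int\big(\frac{\epsilon_0}{2m}|E^\eps_\delta|^2+\frac{1}{2\mu_0 m}|B^\eps_\delta|^2\big)\ge0$ and I have used that the whole equation is in divergence form, so $\int\int f^\eps_\delta(t)=M^\eps_0$. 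Integrating in time and discarding the nonnegative dissipation term, then inserting $\mathcal{W}^\eps_\delta(0)=U^\eps_0\le U_0$, $M^\eps_0\le M_0$ and taking the supremum over $t\le T$, gives the first inequality; discarding instead $\eps\mathcal{W}^\eps_\delta(t)\ge0$ and setting $t=T$ gives the second.

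The main obstacle is the limit $\delta\to0$. The weak-$\star$ limits of $|E^\eps_\delta|^2$, $|B^\eps_\delta|^2$ and $|v|^2 f^\eps_\delta$ differ from $|E^\eps|^2$, $|B^\eps|^2$, $|v|^2 f^\eps$ exactly by the nonnegative defect measures $\mu^\eps_E$, $\mu^\eps_B$, $m^\eps$ of Subsection \ref{Prio_est}; since these carry the favorable sign, weak lower semicontinuity lets the $\delta$-uniform bounds descend to $(f^\eps,E^\eps,B^\eps)$ with unchanged right-hand sides, the defect measures only strengthening the left-hand side. The delicate points are to keep the electric-coupling cancellation robust under mollification --- which is exactly where the symmetry of $K_\delta$ enters --- and to verify that the limiting balance may be read for a.e.\ $t$, so that the supremum over $t$ is justified.
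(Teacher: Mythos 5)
Your proposal is correct and follows essentially the same route as the paper: the author also derives the pure kinetic-plus-field energy identity on the regularized system (Lemma \ref{regu_loc_ener} in the Appendix, where the cancellation of the coupling terms via the self-adjointness of $K_\delta$ is carried out exactly as you describe), integrates in time, and then passes to the limit $\delta\to 0$ using the nonnegative defect measures of Subsection \ref{Prio_est}, which only add favorable terms to the left-hand side. No further comparison is needed.
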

\begin{proof}
We first establish the results for approximate solutions $(f^\eps_\delta,E^\eps_\delta,B^\eps_\delta)$ with initial data $(f^\eps_0,E^\eps_0,B^\eps_0)$.
Multiplying \eqref{equ:VMFP-Scale} by $\frac{|v|^2}{2}$ and integrating with respect to $(x,v)\in\R^3\times\R^3$ then using the Maxwell equations \eqref{equ:MaxwellEps} yields the following the energy identity:
\[
\eps \dfrac{\mathrm{d}}{\mathrm{d}t} \left\{ \int_{\R^3}\int_{\R^3}{\dfrac{|v|^2}{2}f^\eps_\delta}\mathrm{d}v\mathrm{d}x + \int_{\R^3}\left({\dfrac{\epsilon_0}{2m}|E^\eps_\delta|^2} + \dfrac{1}{2\mu_0 m}|B^\eps_\delta|^2 \right)\mathrm{d}x \right\} = \dfrac{3\sigma}{\tau}M^\eps _{0} - \dfrac{1}{\tau} \int_{\R^3}\int_{\R^3}{|v|^2 f^\eps_\delta}\mathrm{d}v\mathrm{d}x.
\]
A detail proof of this identity is provided in Lemma \ref{regu_loc_ener} in Appendix. Therefore we obtain
\begin{align*}
&\eps  \left\{ \int_{\R^3}\int_{\R^3}{\dfrac{|v|^2}{2}f^\eps_\delta}\mathrm{d}v\mathrm{d}x + \int_{\R^3}\left({\dfrac{\epsilon_0}{2m}|E^\eps_\delta|^2} + \dfrac{1}{2\mu_0 m}|B^\eps_\delta|^2 \right)\mathrm{d}x \right\} + \dfrac{1}{\tau} \int_0^t\int_{\R^3}\int_{\R^3}{|v|^2 f^\eps_\delta}\mathrm{d}v\mathrm{d}x\mathrm{d}s \\
&= \eps U^\eps _{0}+  \dfrac{3\sigma}{\tau} t M^\eps _{0}.
\end{align*}
Taking limits and using the defect measures in Subsection \ref{Prio_est}, we deduce that the weak solution $(f^\eps,E^\eps,B^\eps)$ satisfies the following equation:
\begin{align*}
&\eps  \left\{ \int_{\R^3}\int_{\R^3}{\dfrac{|v|^2}{2}f^\eps}\mathrm{d}v\mathrm{d}x + \int_{\R^3}\left({\dfrac{\epsilon_0}{2m}|E^\eps|^2} + \dfrac{1}{2\mu_0 m}|B^\eps|^2 \right)\mathrm{d}x \right\} + \dfrac{1}{\tau} \int_0^t\int_{\R^3}\int_{\R^3}{|v|^2 f^\eps}\mathrm{d}v\mathrm{d}x\mathrm{d}s \\
&\quad +\dfrac{1}{2}\int_{\R^3}\int_{\R^3}\mathrm{d}m^\eps + \dfrac{1}{2}\int_{\R^3}\mathrm{d}\left( \dfrac{\epsilon_0}{m}tr(\mu^\eps_E) + \dfrac{1}{\mu_0 m} tr(\mu^\eps_B) \right) + \dfrac{1}{\tau}\int_0^t\int_{\R^3}\int_{\R^3}\mathrm{d}m^\eps\\
&= \eps U^\eps _{0}+  \dfrac{3\sigma}{\tau} t M^\eps _{0},\,\,t\in[0,T],
\end{align*}
which implies the desired results.
\end{proof}
\section{Analysis of the limit system}
\label{lim_mod_2d}
We start with the analysis of the system \eqref{equ:lim_mod_2dx}. Note that this system can be split into two problems. First solves for $(n,E)$
\begin{equation}
\label{equ:lim_mod_2d_bis1}
\left\{\begin{array}{l}
\partial_t n (t,x_\perp) + \Divx\left[\dfrac{n e}{\omega_c}\wedge \left( \sigma \dfrac{\nabla_x n}{n} - \dfrac{q}{m}E\right)\right] =0,\,\,(t,x=(x_\perp,x_3))\in(0,T)\times\R^3,\\
\epsilon_0 \Divx E = q(n -D),\quad \cur E =0,\quad E_3 =0,\,\,\,\, E=(E_1,E_2,E_3),\\
n(0,x_\perp)=n_0(x_\perp),\,\,
\end{array}\right.
\end{equation}
and secondly find $B_1$ solution of 
\begin{equation}
\label{equ:lim_mod_2d_bis2}
\left\{\begin{array}{l}
 \cur B_1(t,x_\perp)  =\mu_0\epsilon_0\partial_t E(t,x_\perp) + \mu_0 q \dfrac{n e}{\omega_c}\wedge \left( \sigma \dfrac{\nabla_x n}{n} - \dfrac{q}{m}E \right),\\
\Divx B_1 = 0.
\end{array}\right.
\end{equation}
We give here an existence result for \eqref{equ:lim_mod_2d_bis1} which is a direct consequence of the existence result obtained in \cite{BosTuan}. The system \eqref{equ:lim_mod_2d_bis1} can be recast as:
\begin{equation*}
\begin{split}
\left\{\begin{array}{l}
\partial_t n (t,x_\perp) + \mathrm{div}_{x_\perp}\left[n \left( \begin{matrix}
\dfrac{E_2}{B_{\text{ext}}} - \dfrac{\sigma}{\omega_c}\dfrac{\partial_{x_2}n}{n}\\
-\dfrac{E_1}{B_{\text{ext}}} + \dfrac{\sigma}{\omega_c}\dfrac{\partial_{x_1}n}{n}
\end{matrix}
\right)\right] =0,\quad (t,x_\perp)\in (0,T)\times\R^2,\\
\epsilon_0 (\partial_{x_1}E_1 + \partial_{x_2}E_2) = q(n -D),\\
\partial_{x_1}E_2 - \partial_{x_2}E_1 =0,\quad E_3 =0,\\
n(0,x_\perp) = n_0(x_\perp).
\end{array}\right.
\end{split}
\end{equation*}
\begin{pro}
\label{existsol_lim_mod1}
Let $T>0$. Let $B_{\text{ext}}\in C^2_b(\R^2)$ be a smooth magnetic field such that $\inf_{x_\perp\in\R^2} B_{\text{ext}}(x_\perp) = B_0 >0$ and the fixed background density $D$ verifies that $|x_\perp| D\in L^1(\R^2), D\in L^1(\R^2)\cap W^{1,\infty}(\R^2)$. Assume that the initial condition $n_0(x_\perp)$ satisfies
\begin{align*}
n_0 \geq 0,\quad |x_\perp|n_0 \in L^1(\R^2),\quad n_0\in W^{1,1}(\R^2)\cap W^{1,\infty}(\R^2),\\
\int_{\R^2} n_0(x_\perp) \mathrm{d}x_\perp = \int_{\R^2} D(x_\perp)\mathrm{d}x_\perp.
\end{align*}
Then there is a unique regular solution $n(t,x_\perp)\geq 0$ on $[0,T]\times \R^2$ of \eqref{equ:lim_mod_2d_bis1} that has the following properties
\begin{align*}
\int_{\R^2}n(t,x_\perp)\mathrm{d}x_\perp = \int_{\R^2} D(x_\perp)\mathrm{d}x_\perp,\\
n \in W^{1,\infty}(0,T;L^1(\R^2))\cap W^{1,\infty}((0,T)\times\R^2),\quad |x_\perp|n \in L^\infty(0,T;L^1(\R^2)),\\
E\in W^{1,\infty}((0,T)\times \R^2),\quad E,\,\,\partial_t E\in L^\infty(0,T;L^2(\R^2)).
\end{align*}
Moreover, if we assume that $B_{\text{ext}}\in C^3_b(\R^2)$, $D\in W^{1,1}(\R^2)\cap W^{2,\infty}(\R^2)$ and the initial condition  $n_0(x) $ satisfies
\begin{align*}
n_0 \in W^{2,1}(\R^2)\cap W^{2,\infty}(\R^2),\quad \ln n_0 \in W^{2,\infty}(\R^2)
\end{align*}
then we have 
\[
k[n]  \in W^{1,\infty}((0,T)\times\R^2),\quad k[n] = \sigma\dfrac{\nabla_x n}{n} - \dfrac{q}{m}E.
\]
\end{pro}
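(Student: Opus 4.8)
The plan is to reduce \eqref{equ:lim_mod_2d_bis1} to the self-consistent continuity--Poisson drift model already solved in \cite{BosTuan}, import existence, uniqueness and the basic regularity from there, and then run a regularity bootstrap for the sharper conclusion. First I would apply Proposition \ref{Equiv_form}: since here $e=(0,0,1)$ is constant, one has $\rot_x e=0$ and $\partial_x e=0$, so the curvature drift $v_{CD}$ and the corrector $\frac{\sigma n}{B_{\text{ext}}\omega_c}(e\cdot\rot_x e)$ vanish identically and the flux collapses to
\[
\Divx\left(\frac{ne}{\omega_c}\wedge k[n]\right)=\mathrm{div}_{x_\perp}(n\calV[n]),\qquad \calV[n]=\frac{E\wedge e}{B_{\text{ext}}}-\sigma\frac{\nabla_x\omega_c\wedge e}{\omega_c^2}.
\]
The parallel Lagrange-multiplier term $B_{\text{ext}}e\cdot\nabla_x p$ of the three-dimensional model \eqref{equ:lim_mod_3d} is absent because no quantity depends on $x_3$; this is precisely the mechanism that eliminates the multiplier in the $2d\times3d$ geometry. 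Crucially, the gradient-drift part of $\calV[n]$ equals $\sigma\,\nabla_x(1/\omega_c)\wedge e$, a rotated planar gradient and therefore divergence free, so that $\partial_t n+\mathrm{div}_{x_\perp}(n\calV[n])=0$ is a genuine transport equation rather than a degenerate parabolic one.

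Next I would close the electromagnetic coupling. From $\cur E=0$ and $E_3=0$ I write $E=-\nabla_{x_\perp}\Phi$ with $-\epsilon_0\Delta_{x_\perp}\Phi=q(n-D)$, so $E$ is the two-dimensional Poisson field generated by $n-D$; substituting this into $\calV[n]$ turns the problem into the transport--Poisson system analysed in \cite{BosTuan, DegFil16}. I would then verify that the present hypotheses are exactly those required there: the lower bound $\inf B_{\text{ext}}=B_0>0$ with $B_{\text{ext}}\in C^2_b$ makes $1/\omega_c$ and $\nabla_x\omega_c/\omega_c^2$ bounded in $C^1_b$, while $D,n_0\in L^1\cap W^{1,\infty}$, $|x_\perp|D,|x_\perp|n_0\in L^1$ and the neutrality $\int n_0=\int D$ furnish the moment control and $L^1$-integrability needed to define the planar Poisson field and to propagate $|x_\perp|n\in L^\infty(0,T;L^1)$. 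Invoking the well-posedness theorem of \cite{BosTuan} then yields the unique nonnegative regular $n$ with mass conservation and the bounds $n\in W^{1,\infty}(0,T;L^1)\cap W^{1,\infty}((0,T)\times\R^2)$; two-dimensional elliptic regularity with $n-D\in W^{1,\infty}$ (hence H\"older) gives $E\in W^{1,\infty}$, and differentiating the Poisson equation in time with $\partial_t n\in L^\infty(0,T;L^1)$ gives $\partial_t E\in L^\infty(0,T;L^2)$.

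For the final statement I would run a regularity bootstrap to reach $k[n]=\sigma\nabla_x n/n-\frac{q}{m}E\in W^{1,\infty}$, which amounts to controlling $\nabla_x\ln n$ in $W^{1,\infty}$ together with $\nabla_x E\in L^\infty$. Writing the equation satisfied by $w=\ln n$, namely $\partial_t w+\calV[n]\cdot\nabla_x w+\mathrm{div}_{x_\perp}\calV[n]=0$, I would differentiate it once and twice in $x_\perp$ and estimate the resulting linear transport equations by characteristics and Gronwall. The key structural point is that, because $\cur E=0$, the self-consistent part of $\mathrm{div}_{x_\perp}\calV[n]$ equals $E_2\,\partial_{x_1}(1/B_{\text{ext}})-E_1\,\partial_{x_2}(1/B_{\text{ext}})$ and so involves $E$ with no derivative, hence at most $\nabla_x^2 E$ after two differentiations; the stronger data $B_{\text{ext}}\in C^3_b$, $D\in W^{1,1}\cap W^{2,\infty}$ and $n_0\in W^{2,1}\cap W^{2,\infty}$ then make all source and coefficient terms bounded (here $n-D\in C^{1,\alpha}$ yields $E\in W^{2,\infty}$ in two dimensions), while $\ln n_0\in W^{2,\infty}$ supplies bounded initial data for $\nabla_x w$ and $\nabla_x^2 w$. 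Gronwall propagates $\nabla_x\ln n\in W^{1,\infty}$ on $[0,T]$, and combining this with $E\in W^{1,\infty}$ gives $k[n]\in W^{1,\infty}((0,T)\times\R^2)$.

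I expect the main obstacle to be the coupled, nonlinear propagation of the logarithmic regularity: since $\mathrm{div}_{x_\perp}\calV[n]$ feeds the $\ln n$ equation and itself depends on $n$ through the nonlocal Poisson field $E$, the $W^{2,\infty}$ estimate for $\ln n$ cannot be decoupled from the $W^{2,\infty}$ estimate for $n$ and the elliptic estimate for $E$, so all three must be closed simultaneously in a single Gronwall loop. This is the technical core handled in \cite{BosTuan}, and the hypothesis $B_0>0$ is what keeps the drift coefficients $1/\omega_c$ and $\nabla_x\omega_c/\omega_c^2$ uniformly bounded and nondegenerate, which is precisely what allows the loop to close on the whole interval $[0,T]$.
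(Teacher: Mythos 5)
Your proposal is correct and follows essentially the same route as the paper: the paper likewise recasts \eqref{equ:lim_mod_2d_bis1} as the planar transport--Poisson drift system and invokes the well-posedness theory of \cite{BosTuan} as a direct consequence, offering no further argument. Your additional observations (vanishing of the curvature/corrector terms for constant $e$, divergence-free gradient drift, and the Gronwall bootstrap for $k[n]\in W^{1,\infty}$) simply make explicit the details the paper delegates to that reference.
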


Once we find $(n,E)$ it is easy to solve the equation \eqref{equ:lim_mod_2d_bis2}
\begin{pro}
\label{existsol_lim_mod2}
Under the hypotheses of Proposition \ref{existsol_lim_mod1}, the system \eqref{equ:lim_mod_2d_bis2} has a unique solution $B_1$ satisfying $B_1\in L^\infty(0,T;H^1(\R^2))\cap L^\infty(0,T;W^{1,p}(\R^2))$ with $p>3$, and $\partial_t B_1 \in L^\infty(0,T;L^2(\R^2))$. In particular, $B_1\in L^\infty((0,T)\times\R^2)$.
\end{pro}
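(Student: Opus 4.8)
The plan is to exploit Assumptions 1--2 to reduce the three-dimensional div--curl system \eqref{equ:lim_mod_2d_bis2} to a single scalar elliptic problem in the plane. Write $G := \mu_0\epsilon_0\partial_t E + \mu_0 q\,\frac{ne}{\omega_c}\wedge k[n]$ for the prescribed curl. Since $e=(0,0,1)$, and $E=(E_1,E_2,0)$, $k[n]=(k_1,k_2,0)$ depend only on $x_\perp$, one computes that $\frac{ne}{\omega_c}\wedge k[n]$ has vanishing third component, so $G=(G_1,G_2,0)$ is a function of $(t,x_\perp)$ alone. Seeking $B_1=B_1(t,x_\perp)$ with $\Divx B_1=0$, the third component of $\cur B_1=G$ together with $\Divx B_1=0$ says that $(B_{1,1},B_{1,2})$ is both curl-free and divergence-free in $\R^2$; in the class $H^1(\R^2)$ this forces $B_{1,1}=B_{1,2}=0$, while the first two components of $\cur B_1=G$ reduce to
\[
\nabla_{x_\perp}B_{1,3}=(-G_2,G_1).
\]
Thus $B_1=B_{1,3}\,e$, where $B_{1,3}$ is a stream function for $(G_1,G_2)$, and uniqueness in the stated space follows from the Liouville property for harmonic functions in $H^1(\R^2)$.

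Next I would verify the compatibility condition that makes this stream function exist. The relation above is solvable precisely when $(-G_2,G_1)$ is curl-free, i.e. when $\mathrm{div}_{x_\perp}(G_1,G_2)=\Divx G=0$. Using the Gauss law $\epsilon_0\Divx E=q(n-D)$ with $D$ time-independent and the continuity equation $\partial_t n+\Divx(\frac{ne}{\omega_c}\wedge k[n])=0$ from \eqref{equ:lim_mod_2dx}, one gets
\[
\Divx G=\mu_0\epsilon_0\,\partial_t\Divx E+\mu_0 q\,\Divx\Big(\frac{ne}{\omega_c}\wedge k[n]\Big)=\mu_0 q\,\partial_t n-\mu_0 q\,\partial_t n=0,
\]
so the constraint is satisfied by construction and $B_{1,3}$ is determined up to an additive constant, fixed by requiring decay at infinity.

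For the spatial regularity, the gradient is controlled directly: from $\nabla_{x_\perp}B_{1,3}=(-G_2,G_1)$ it suffices to bound $G$ in $L^2\cap L^p$. The drift term lies in $L^2\cap L^p$ because $n\in L^1\cap L^\infty\subset L^2\cap L^p$, $k[n]\in L^\infty$ and $\omega_c$ is bounded below (Proposition \ref{existsol_lim_mod1}). The term $\partial_t E$ lies in the same spaces since $\cur E=0$ gives $E=-\nabla_{x_\perp}\Phi$ with $-\epsilon_0\Delta_{x_\perp}\Phi=q(n-D)$, whence $\partial_t E$ is a Riesz transform of $\partial_t n\in L^1\cap L^\infty$ and Riesz transforms are bounded on $L^2$ and $L^p$. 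To recover $B_{1,3}$ itself rather than only its gradient, I would note that $\partial_t E$ is curl-free, so the planar vorticity source in $-\Delta_{x_\perp}B_{1,3}=\partial_{x_1}G_2-\partial_{x_2}G_1$ reduces to the drift contribution alone, which is integrable with zero mean; the two-dimensional Newtonian potential then produces $B_{1,3}\in L^2\cap L^p$ with no logarithmic growth, and the embedding $W^{1,p}(\R^2)\hookrightarrow L^\infty$ for $p>2$ yields $B_1\in L^\infty((0,T)\times\R^2)$.

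The main obstacle is the time regularity $\partial_t B_1\in L^\infty(0,T;L^2)$. Differentiating gives $\nabla_{x_\perp}\partial_t B_{1,3}=(-\partial_t G_2,\partial_t G_1)$, so one needs $\partial_t G\in L^2$. The drift part of $\partial_t G$ is handled by the higher-regularity conclusions of Proposition \ref{existsol_lim_mod1} ($\partial_t n\in L^1\cap L^\infty$, $\partial_t k[n]\in L^\infty$, $n\in L^2$). The genuinely delicate contribution is $\partial_{tt}E$, for which no a priori bound is available. I would circumvent this by returning to the Poisson representation: $\partial_{tt}E=-\nabla_{x_\perp}\partial_{tt}\Phi$ with $-\epsilon_0\Delta_{x_\perp}\partial_{tt}\Phi=q\,\partial_{tt}n=-q\,\Divx\partial_t(\frac{ne}{\omega_c}\wedge k[n])$, so that $\partial_{tt}E$ is a Riesz transform of the $L^2$ field $\partial_t(\frac{ne}{\omega_c}\wedge k[n])$ and hence lies in $L^2$. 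This closes $\partial_t G\in L^2$, and with it the estimate on $\partial_t B_1$. Thus the crux is to trade the unavailable bound on $\partial_{tt}E$ for a bound on the time derivative of the drift through the elliptic structure of $E$ and the $L^2$-boundedness of Riesz transforms; a secondary subtlety, resolved by the zero-mean property just mentioned, is to control the stream function $B_{1,3}$ itself and not merely $\nabla_{x_\perp}B_{1,3}$.
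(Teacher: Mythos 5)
Your reduction to the scalar stream function, the compatibility check $\Divx G=0$ via Gauss's law and the continuity equation, and the key idea of trading the unavailable bound on $\partial_{tt}E$ for the time derivative of the drift through the Poisson equation and the $L^2$-boundedness of Riesz transforms are all exactly the mechanisms the paper uses (the paper phrases the first two as the solvability of the div--curl system for a divergence-free source in $L^2\cap L^\infty$, and the third as the estimate $\epsilon_0\,\mathrm{div}_{x_\perp}(\partial_t^2E)=-q\,\partial_t\mathrm{div}_{x_\perp}(nA[n])$). So the skeleton of your argument is sound and matches the intended proof.

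There is, however, a genuine gap in your treatment of the time regularity. From $\nabla_{x_\perp}\partial_tB_{1,3}=(-\partial_tG_2,\partial_tG_1)$ with $\partial_tG\in L^\infty(0,T;L^2)$ you obtain $\nabla_{x_\perp}\partial_tB_{1,3}\in L^\infty(0,T;L^2)$, i.e.\ $\partial_tB_1\in L^\infty(0,T;\dot H^1(\R^2))$ --- but on the whole plane this does \emph{not} imply $\partial_tB_1\in L^\infty(0,T;L^2(\R^2))$, which is what the proposition claims; the low-frequency part is uncontrolled, and this is precisely the two-dimensional endpoint where $\dot H^1\not\hookrightarrow L^2$ and weak Young fails. (You recognized exactly this subtlety for $B_{1,3}$ itself, but then declared the $\partial_tB_1$ estimate closed once $\partial_tG\in L^2$, which is where the argument breaks.) The paper avoids this by estimating the source one derivative lower: it shows $\partial_t\bigl(\partial_tE+ne\wedge(\sigma\tfrac{\nabla_xn}{n}-\tfrac{q}{m}E)\bigr)\in L^\infty(0,T;H^{-1}(\R^2))$, using $\|\partial_t^2(E_1,E_2)\|_{H^{-1}}\le C\|\partial_t(nA[n])\|_{H^{-1}}$ and $\|\partial_t\nabla_xn\|_{H^{-1}}\le\|\partial_tn\|_{L^2}$, and then the div--curl solution operator (of order $-1$) maps $H^{-1}$ into $L^2$, landing directly on $\partial_tB_1\in L^2$. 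A side benefit of the paper's $H^{-1}$ route is that the term $\tfrac{\sigma}{\omega_c}\partial_t\nabla_xn$ is absorbed by an integration by parts, so only $\partial_tn\in L^2$ is needed, whereas your $L^2$ bound on $\partial_tG$ leans on the stronger regularity $\partial_tk[n]\in L^\infty$ from the ``Moreover'' part of Proposition \ref{existsol_lim_mod1}. To repair your proof, replace the final step by the representation $\partial_tB_{1,3}=(-\Delta_{x_\perp})^{-1}\bigl(\partial_{x_1}\partial_tG_2-\partial_{x_2}\partial_tG_1\bigr)$ and estimate the right-hand side in $\dot H^{-2}$, equivalently estimate $\partial_tG$ in $H^{-1}$ as the paper does.
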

\begin{proof}
Observe from Proposition \ref{existsol_lim_mod1} that we have $\Divx (\mu_0 \epsilon_0 \partial_t E + \mu_0 q \frac{ne}{\omega_c}\wedge (\sigma\frac{\nabla_x n}{n}- \frac{q}{m}E))=0$ and that $\mu_0 \epsilon_0 \partial_t E + \mu_0 q \frac{ne}{\omega_c}\wedge (\sigma\frac{\nabla_x n}{n}- \frac{q}{m}E)\in L^\infty(0,T;L^2(\R^2)\cap L^\infty(\R^2))$. Therefore there is a unique $B_1\in L^\infty(0,T;H^1(\R^2)\cap W^{1,p}(\R^2))$  with $p> 3$ satisfying \eqref{equ:lim_mod_2d_bis2}. In particular, since $p>3$, we have $B_1\in L^\infty((0,T)\times\R^2)$. In order to estimate $\partial_t B_1\in L^\infty(0,T;L^2(\R^2))$ it is sufficient to estimate $\partial_t(\partial_t E + n e\wedge(\frac{\nabla_x n}{n} -E))\in L^\infty(0,T;H^{-1}(\R^2))$. We have
\begin{align*}
\epsilon_0  \mathrm{div}_{x_\perp}(\partial^2_t E_1,\partial^2_t E_2) = q \partial^2_t (n-D) = -q \partial_t \mathrm{div}_{x_1,x_2}(n A[n]),\quad A[n] =\left( \begin{matrix}
\dfrac{E_2}{B_{\text{ext}}} - \dfrac{\sigma}{\omega_c}\dfrac{\partial_{x_2}n}{n}\\
-\dfrac{E_1}{B_{\text{ext}}} + \dfrac{\sigma}{\omega_c}\dfrac{\partial_{x_1}n}{n}
\end{matrix}\right),
\end{align*}
and thus
\begin{align*}
\|\partial^2_t (E_1,E_2) \|_{L^\infty(0,T;H^{-1})} &\leq C\|\partial_t (nA[n])\|_{L^\infty(0,T;H^{-1})}\\
&\leq C (\|\partial_t n\|_{L^\infty} \|(E_1,E_2)\|_{L^\infty(0,T;L^2)}  + \|n\|_{L^\infty}\|\partial_t E\|_{L^\infty(0,T;L^2)})\\
&\quad + C \|\partial_t \nabla_x n\|_{L^\infty(0,T;H^{-1}(\R^2))}< +\infty,
\end{align*}
since $\|\partial_t \nabla_x n\|_{L^\infty(0,T;H^{-1}(\R^2))} \leq \|\partial_t n\|_{L^\infty(0,T;L^2(\R^2))}$.
\end{proof}
\begin{remark}
We emphasize that the regular solution of the limit model \eqref{equ:lim_mod_2dx}, obtained in this section, rigorously satisfies the properties stated in Proposition \ref{BalLiMod}.
\end{remark}

\section{Convergence result}
\label{Conve}
We now focus on the asymptotic behavior, as $\eps \searrow 0$ of the family of weak solutions $(f^\eps, E^\eps,B^\eps)_{\eps>0}$ of the VPFP system \eqref{equ:VMFP-Scale2dx}, \eqref{equ:MaxwellEps}, and \eqref{equ:Initial} and we rigorously establish the connection to the fluid model \eqref{equ:lim_mod_2dx}. \\
We are looking a model for the concentration $n^\eps = n[f^\eps] =\int_{\R^3}{f^\eps}\mathrm{d}v$, similar to equation \eqref{equ:lim_mod_2dx} of the limit concentration $n$ and we perform the balance of the relative entropy between $n^\eps$ and $n$. As usual, these computations require the regular of the solution for the limit model. We justify the asymptotic behavior of  $(f^\eps, E^\eps,B^\eps)_{\eps>0}$ when $\eps \searrow 0$, provided that there is a regular solution $(n, E, B_1)$ for the fluid model \eqref{equ:lim_mod_2dx}. 

Using the local conservation law of the momentum \eqref{equ:law_moment_meas},
we can express the orthogonal component of $\frac{1}{\eps}\int_{\R^3}v f^\eps\mathrm{d}v$ as follows
\begin{align*}
\int_{\R^3}\dfrac{(v-(v\cdot e)e)}{\eps}f^\eps\mathrm{d}v 
&= \dfrac{ e}{\omega_c}\wedge \left(\sigma \nabla_x n^\eps - \dfrac{\epsilon_0}{m}\mathcal{A}(E^\eps,E^\eps) - \dfrac{q}{m}D E^\eps \right) \\
&\quad -\dfrac{e}{\omega_c}\wedge \left(\dfrac{1}{m\mu_0}\mathcal{A}(B^\eps,B^\eps)\right)+ \dfrac{e}{\omega_c}\wedge\left( \dfrac{\epsilon_0\eps}{m}\partial_t (E^\eps\wedge B^\eps)\right)\\
&\quad + \dfrac{e}{\omega_c}\wedge R^\eps + \dfrac{e}{\omega_c}\wedge F^\eps,
\end{align*}
where we denote the following group of defect terms
\begin{equation*}
\begin{split}
R^\eps &=  \dfrac{e}{\omega_c}\wedge\left[  
 - \dfrac{\epsilon_0}{m}\Divx\left( \mu^\eps_E - \dfrac{1}{2}tr(\mu^\eps_E)I_3 \right) - \dfrac{1}{m\mu_0}\Divx\left(\mu^\eps_B - \dfrac{1}{2}tr(\mu^\eps_B)I_3 \right)\right] \\
&\quad + \dfrac{e}{\omega_c}\wedge \left( \dfrac{\epsilon_0\eps}{m}\partial_t \mu^\eps_{EB}\right)  + \dfrac{e}{\omega_c}\wedge \Divx\int_{\Sp^2}\sigma\otimes\sigma \mathrm{d}m^\eps,
\end{split}
\end{equation*}
and the remaining terms
\[
F^\eps =  \Divx\int_{\R^3}{(\sigma \nabla_v f^\eps + v f^\eps)\otimes v}\mathrm{d}v + \eps\partial_t \int_{\R^3}v f^\eps\mathrm{d}v + \dfrac{1}{\tau}\int_{\R^3}vf^\eps\mathrm{d}v,
\]
and in the above computation, we have used that $\Divx\intvt{\sigma \nabla_v f^\eps \otimes v} = - \sigma \nabla_x n^\eps$.\\
Observe that 
\begin{equation}
\label{equ:current}
\begin{split}
 \int_{\R^3}\dfrac{v}{\eps}f^\eps \mathrm{d}v &=  \int_{\R^3}\dfrac{v - (v\cdot e)e}{\eps}f^\eps\mathrm{d}v +  \int_{\R^3} \dfrac{(v\cdot e)e}{\eps} f^\eps \mathrm{d}v\\
&= \dfrac{e}{\omega_c}\wedge \left(\sigma \nabla_x n^\eps - \dfrac{\epsilon_0}{m}\mathcal{A}(E^\eps,E^\eps)-\dfrac{q}{m}DE^\eps\right) \\
&\quad -\dfrac{e}{\omega_c}\wedge \left(\dfrac{1}{m\mu_0}\mathcal{A}(B^\eps,B^\eps)\right)+ \dfrac{e}{\omega_c}\wedge\left( \dfrac{\epsilon_0\eps}{m}\partial_t (E^\eps\wedge B^\eps)\right)\\
&\quad+  \dfrac{e}{\omega_c}\wedge R^\eps+ \dfrac{e}{\omega_c}\wedge F^\eps + \int_{\R^3}\dfrac{(v\cdot e)e}{\eps}f^\eps\mathrm{d}v.
\end{split}
\end{equation}
Finally, using \eqref{equ:current} and thanks to the local conversation of mass \eqref{equ:law_charge_meas}, we obtain a similar model for $n^\eps$, as in the first equation of \eqref{equ:lim_mod_2dx} as follows:
\begin{equation}
\label{equ:EquDensityEps}
\begin{split}
\partial_t n^\eps &+ \Divx\left( \dfrac {e}{\omega_c}\wedge  \left( \sigma\nabla_x n^\eps - \dfrac{\epsilon_0}{m}\mathcal{A}(E^\eps,E^\eps) -\dfrac{q}{m}DE^\eps \right) \right)\\
&+\Divx\left( -\dfrac{e}{\omega_c}\wedge \left(\dfrac{1}{m\mu_0}\mathcal{A}(B^\eps,B^\eps)\right)+ \dfrac{e}{\omega_c}\wedge\left( \dfrac{\epsilon_0\eps}{m}\partial_t (E^\eps\wedge B^\eps)\right)\right)\\
&+ \Divx\left( \dfrac{e}{\omega_c}\wedge R^\eps\right)+ \Divx\left( \dfrac{e}{\omega_c}\wedge F^\eps\right) =0,
\end{split}
\end{equation}
since $e = (0,0,1)^t$ hence \[\Divx \int_{\R^3} (v\cdot e)e f^\eps(t,x_\perp,v) \mathrm{d}v= \partial_{x_3}\int_{\R^3}v_3 f^\eps(t,x_\perp,v)\mathrm{d}v =0.\]

We intend to estimate the modulated energy of $n^\eps$ with respect to $n$, given by \eqref{equ:mod_energy}, by expressing $\mathcal{E}[n^\eps|n]$ as follows:
\begin{align}
\label{equ:EntropyDens}
\begin{split}
\mathcal{E}[n^\eps|n] 
&= \int_{\R^2}{(\sigma n^\eps \ln n^\eps +\dfrac{\epsilon_0}{2m} |E^\eps|^2 + \dfrac{1}{2\mu_0 m}|B^\eps|^2)}\mathrm{d}x_\perp\\
&\quad - \int_{\R^2}{(\sigma n \ln n +\dfrac{\epsilon_0}{2m} |E|^2)}\mathrm{d}x_\perp + \dfrac{\eps^2}{2\mu_0 m}\int_{\R^2}|B_1|^2\mathrm{d}x_\perp \\
&\quad - \int_{\R^2}\sigma(1+\ln n)(n^\eps -n)\mathrm{d}x_\perp - \dfrac{\epsilon_0}{m}\int_{\R^2}E\cdot(E^\eps-E)\mathrm{d}x_\perp -\dfrac{\eps}{\mu_0 m}\int_{\R^2}B^\eps\cdot B_1\mathrm{d}x_\perp\\
&= \mathcal{E}[n^\eps] - \mathcal{E}[n] + \dfrac{\eps^2}{2\mu_0 m}\int_{\R^2}|B_1|^2\mathrm{d}x_\perp \\
&\quad - \int_{\R^2}\sigma(1+\ln n)(n^\eps -n)\mathrm{d}x_\perp - \dfrac{\epsilon_0}{m}\int_{\R^2}E\cdot(E^\eps-E)\mathrm{d}x_\perp -\dfrac{\eps}{\mu_0 m}\int_{\R^2}B^\eps\cdot B_1\mathrm{d}x_\perp,
\end{split}
\end{align}
where we denote
\[
\mathcal{E}[n^\eps] = \int_{\R^2}{(\sigma n^\eps \ln n^\eps +\dfrac{\epsilon_0}{2m} |E^\eps|^2 + \dfrac{1}{2\mu_0 m}|B^\eps|^2)}\mathrm{d}x_\perp,
\]
and
\[
\mathcal{E}[n] = \int_{\R^2}{(\sigma n \ln n +\dfrac{\epsilon_0}{2m} |E|^2)}\mathrm{d}x_\perp.
\]
We introduce as well the entropy relative of $f^\eps$ with respect to $n^\eps M$, given by
\begin{align*}
&\sigma\int_{\R^2}\int_{\R^3}{n^\eps(t) M h\left(\dfrac{f^\eps(t)}{n^\eps(t) M} \right)}\mathrm{d}v\mathrm{d}x_\perp \\
&= \sigma \int_{\R^2}\int_{\R^3}\left(f^\eps \ln f^\eps - f^\eps \ln n^\eps + f^\eps \ln (2\pi\sigma)^{3/2}+ f^\eps \frac{|v|^2}{2\sigma}\right)\mathrm{d}v\mathrm{d}x_\perp\\
&= \int_{\R^2}\int_{\R^3}\left(\sigma \ln f^\eps + \dfrac{|v|^2}{2}\right)f^\eps\mathrm{d}v\mathrm{d}x_\perp + \dfrac{\epsilon_0}{2m}\int_{\R^2}{|E^\eps|^2}\mathrm{d}x_\perp + \dfrac{1}{2\mu_0 m}\int_{\R^2}|B^\eps|^2\mathrm{d}x_\perp \\
&\quad- \left( \int_{\R^2}{\sigma n^\eps \ln n^\eps}\mathrm{d}x_\perp + \dfrac{\epsilon_0}{2m}\int_{\R^2}{|E^\eps|^2}\mathrm{d}x_\perp + \dfrac{1}{2\mu_0 m}\int_{\R^2}|B^\eps|^2\mathrm{d}x_\perp\right)\\
&\quad + \sigma \ln(2\pi\sigma)^{3/2}\int_{\R^2}\int_{\R^3}{f^\eps}\mathrm{d}v\mathrm{d}x_\perp\\
&= \int_{\R^2}\int_{\R^3}\left(\sigma \ln f^\eps + \dfrac{|v|^2}{2}\right)f^\eps\mathrm{d}v\mathrm{d}x_\perp + \dfrac{\epsilon_0}{2m}\int_{\R^2}{|E^\eps|^2}\mathrm{d}x_\perp + \dfrac{1}{2\mu_0 m}\int_{\R^2}|B^\eps|^2\mathrm{d}x_\perp \\
&\quad - \mathcal{E}[n^\eps(t)] + \sigma \ln(2\pi\sigma)^{3/2}\int_{\R^2}\int_{\R^3}{f^\eps}\mathrm{d}v\mathrm{d}x_\perp.
\end{align*}
Thanks to the free-energy estimate \eqref{equ:ener_meas} and mass conservation of equation \eqref{equ:VMFP-Scale2dx}, we obtain
\begin{equation}
\label{equ:BalanEnerDens}
\begin{split}
 &\mathcal{E}[n^\eps(t)]  +  \sigma\int_{\R^2}\int_{\R^3}{n^\eps(t) M h\left(\dfrac{f^\eps(t)}{n^\eps(t) M} \right)}\mathrm{d}v\mathrm{d}x_\perp \\
 &\quad+ \dfrac{1}{2}\int_{\R^2}\int_{\Sp^2}\mathrm{d}m^\eps + \dfrac{1}{2}\int_{\R^2} \mathrm{d}\left( \dfrac{\epsilon_0}{m}tr(\mu^\eps_E) + \dfrac{\mu_0}{m}tr(\mu^\eps_B)\right)
\\
&\quad + \dfrac{1}{\eps\tau}\int_0^t\int_{\R^2}\int_{\R^3}{\dfrac{|\sigma \nabla_v f^\eps + v f^\eps|^2}{f^\eps}}\mathrm{d}v\mathrm{d}x_\perp\mathrm{d}s\\
&\leq \mathcal{E}[n^\eps(0)] + \sigma\int_{\R^2}\int_{\R^3}{n^\eps(0) M h\left(\dfrac{f^\eps(0)}{n^\eps(0) M} \right)}\mathrm{d}v\mathrm{d}x_\perp.
\end{split}
\end{equation}
Thanks to Proposition \ref{BalLiMod} for the limit model \eqref{equ:lim_mod_2dx} and by combining \eqref{equ:EntropyDens}, \eqref{equ:BalanEnerDens}, we obtain
\begin{equation}
\label{BalModEnerDens}
\begin{split}
&\mathcal{E}[n^\eps(t)|n(t)] +  \sigma\int_{\R^2}\int_{\R^3}{n^\eps(t) M h\left(\dfrac{f^\eps(t)}{n^\eps(t) M} \right)}\mathrm{d}v\mathrm{d}x_\perp + \dfrac{1}{2}\int_{\R^2}\int_{\Sp^2}\mathrm{d}m^\eps \\
& \quad  + \dfrac{1}{2}\int_{\R^2} \mathrm{d}\left( \dfrac{\epsilon_0}{m}tr(\mu^\eps_E) + \dfrac{\mu_0}{m}tr(\mu^\eps_B)\right) + \dfrac{1}{\eps\tau}\int_0^t\int_{\R^2}\int_{\R^3}{\dfrac{|\sigma \nabla_v f^\eps + v f^\eps|^2}{f^\eps}}\mathrm{d}v\mathrm{d}x_\perp\mathrm{d}s\\
&\leq\mathcal{E}[n^\eps(0)|n(0)] +  \sigma\int_{\R^2}\int_{\R^3}{n^\eps(0) M h\left(\dfrac{f^\eps(0)}{n^\eps(0) M} \right)}\mathrm{d}v\mathrm{d}x_\perp \\
&\quad + \dfrac{\eps^2}{2\mu_0 m}\left(\int_{\R^2}|B_1(t)|^2\mathrm{d}x_\perp - \int_{\R^2}|B_1(0)|^2\mathrm{d}x_\perp \right) - \int_0^t \dfrac{\mathrm{d}}{\mathrm{d}s}\int_{\R^2}\sigma(1+\ln n)(n^\eps -n)\mathrm{d}x_\perp\mathrm{d}s \\
&\quad - \dfrac{\epsilon_0}{m}\int_0^t \dfrac{\mathrm{d}}{\mathrm{d}s}\int_{\R^2} E \cdot (E^\eps -E)\mathrm{d}x_\perp\mathrm{d}s - \dfrac{\eps}{\mu_0 m}\int_0^t\dfrac{\mathrm{d}}{\mathrm{d}s}\int_{\R^2}B^\eps\cdot B_1\mathrm{d}x_\perp\mathrm{d}s.
\end{split}
\end{equation}

The next task is to evaluate the time derivative of  the three last contributions in \eqref{BalModEnerDens}.
\begin{pro}
\label{TimeDeriv}
Let $T>0$. With the notations introduced above, we have the following equality in $\mathcal{D}'([0,T[)$:
\begin{align*}
&-\dfrac{\mathrm{d}}{\mathrm{d}t}\int_{\R^2}\sigma(1+\ln n)(n^\eps -n)\mathrm{d}x_\perp
 - \dfrac{\epsilon_0}{m} \dfrac{\mathrm{d}}{\mathrm{d}t}\int_{\R^2} E \cdot (E^\eps -E)\mathrm{d}x_\perp - \dfrac{\eps}{\mu_0 m}\dfrac{\mathrm{d}}{\mathrm{d}t}\int_{\R^2}B^\eps\cdot B_1\mathrm{d}x_\perp\\
&=  - \dfrac{\eps}{\mu_0 m}\int_{\R^2} B^\eps\cdot \partial_t B_1\mathrm{d}x_\perp + \dfrac{q}{m}\int_{\R^2}W[n]\cdot n(E^\eps -E)\mathrm{d}x_\perp + \sigma \int_{\R^2}\Divx (n W[n])\dfrac{n^\eps -n}{n}\mathrm{d}x_\perp\\
&\quad + \int_{\R^2} W[n] \cdot \left(\sigma \nabla_x n^\eps -\dfrac{\epsilon_0}{m}\mathcal{A}(E^\eps,E^\eps)-\dfrac{q}{m}DE^\eps\right)\mathrm{d}x_\perp \\
&\quad -  \int_{\R^2}W[n]\cdot \dfrac{1}{\mu_0 m}\mathcal{A}(B^\eps,B^\eps)\mathrm{d}x_\perp + \int_{\R^2}W[n] \cdot \left(\dfrac{\epsilon_0\eps}{m}\partial_t(E^\eps\wedge B^\eps)\right)\mathrm{d}x_\perp\\
&\quad + \int_{\R^2} W[n]\cdot  R^\eps\mathrm{d}x_\perp +  \int_{\R^2}W [n]\cdot F^\eps\mathrm{d}x_\perp .
\end{align*}
where we denote $W[n]= \dfrac{e}{\omega_c}\wedge k[n]$, with $k[n] = \sigma\dfrac{\nabla_x n}{n} - \dfrac{q}{m}E$.
\end{pro}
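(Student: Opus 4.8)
The plan is to prove the identity by a direct computation: I would differentiate each of the three contributions separately, substitute the available evolution equations, and then collect terms, exhibiting the cancellations. Throughout, the essential tools are the curl-free property $\cur E=0$ of the limit field, the formal self-adjointness of the curl (so that $\int_{\R^2}\cur a\cdot b\,\md x_\perp=\int_{\R^2}a\cdot\cur b\,\md x_\perp$ for fields depending only on $x_\perp$), the cyclic rule $a\cdot(b\wedge c)=c\cdot(a\wedge b)$, and the constraint $e\cdot E=0$ together with $e=(0,0,1)^t$, which forces $E_3=0$ and makes $W[n]$ and $E$ purely transverse.

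First I would treat the entropy term. Writing $\phi=\sigma(1+\ln n)$ and using the limit continuity equation $\partial_t n=-\Divx(nW[n])$ together with equation \eqref{equ:EquDensityEps} for $n^\eps$, which reads $\partial_t n^\eps=-\Divx\big(\frac{e}{\omega_c}\wedge G^\eps\big)$ with $G^\eps$ the bracketed flux of \eqref{equ:current}, the time derivative splits into $-\int\sigma\frac{\partial_t n}{n}(n^\eps-n)=\sigma\int\Divx(nW[n])\frac{n^\eps-n}{n}$ (one of the target terms) plus, after integrating by parts, $-\int\sigma\frac{\nabla_x n}{n}\cdot\big(\frac{e}{\omega_c}\wedge G^\eps\big)+\int\sigma\nabla_x n\cdot W[n]$. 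Applying the cyclic identity and using $\sigma\frac{\nabla_x n}{n}=k[n]+\frac{q}{m}E$, so that $\frac{e}{\omega_c}\wedge\sigma\frac{\nabla_x n}{n}=W[n]+\frac{q}{m}\frac{e}{\omega_c}\wedge E$, produces exactly the family of $W[n]\cdot(\cdot)$ terms (including those carrying $R^\eps$ and $F^\eps$), up to two spurious contributions: $\frac{q}{m}\int G^\eps\cdot\big(\frac{e}{\omega_c}\wedge E\big)$ and, since $k[n]\cdot W[n]=0$, the term $\frac{q}{m}\int nE\cdot W[n]$.

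Next I would differentiate the electric term. Using the limit law $\mu_0\epsilon_0\partial_t E=\cur B_1-\mu_0 q\,nW[n]$ and the Maxwell equation $\mu_0\epsilon_0\eps\partial_t E^\eps=\cur B^\eps-\mu_0 J^\eps$ from \eqref{equ:MaxwellEps}, the piece $-\frac{\epsilon_0}{m}\int E\cdot\partial_t E^\eps$ becomes $\frac{q}{m}\int E\cdot\int_{\R^3}\frac{v}{\eps}f^\eps\,\md v$, because $\int\cur B^\eps\cdot E=\int B^\eps\cdot\cur E=0$. Substituting the current representation \eqref{equ:current}, noting that $E\cdot\int\frac{(v\cdot e)e}{\eps}f^\eps\,\md v=0$ since $E_3=0$, and applying the cyclic rule yields $-\frac{q}{m}\int G^\eps\cdot\big(\frac{e}{\omega_c}\wedge E\big)$, which cancels the first spurious term above. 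The remaining pieces $-\frac{\epsilon_0}{m}\int\partial_t E\cdot(E^\eps-E)+\frac{\epsilon_0}{m}\int E\cdot\partial_t E$ give the target term $\frac{q}{m}\int nW[n]\cdot(E^\eps-E)$, the term $-\frac{q}{m}\int nE\cdot W[n]$ (cancelling the second spurious term), and a leftover $-\frac{1}{\mu_0 m}\int\cur B_1\cdot E^\eps$; here $\int\cur B_1\cdot E=0$ again by $\cur E=0$.

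Finally, the magnetic term yields $-\frac{\eps}{\mu_0 m}\int B^\eps\cdot\partial_t B_1$ directly (a target term), while for the other half I would use $\eps\partial_t B^\eps=-\cur E^\eps$ from \eqref{equ:MaxwellEps} and self-adjointness of the curl to write $-\frac{\eps}{\mu_0 m}\int\partial_t B^\eps\cdot B_1=\frac{1}{\mu_0 m}\int\cur E^\eps\cdot B_1=\frac{1}{\mu_0 m}\int\cur B_1\cdot E^\eps$, which cancels the leftover from the electric term. Collecting everything leaves precisely the claimed right-hand side. The main obstacle is the bookkeeping of the cross terms: the two cancellations — the $\frac{e}{\omega_c}\wedge E$ contribution shared by the entropy and electric parts, and the $\cur B_1\cdot E^\eps$ contribution shared by the electric and magnetic parts — are the crux, and they hinge on inserting the current formula \eqref{equ:current}, the constraint $E_3=0$, and the identity $\cur E=0$ at exactly the right places; the defect pieces $R^\eps$ and the fluid remainder $F^\eps$ merely ride along inside $G^\eps$ and require no separate treatment.
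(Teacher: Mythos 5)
Your proposal is correct and follows essentially the same route as the paper: you split the three time derivatives, substitute the continuity equations for $n$ and $n^\eps$, the Amp\`ere--Maxwell and Faraday laws, and the current decomposition \eqref{equ:current}, and exploit $\cur E=0$ and $E\cdot e=0$ exactly where the paper does. The only difference is cosmetic --- you push the cyclic identity through earlier so as to isolate $W[n]\cdot G^\eps$ directly and make the two pairwise cancellations explicit, whereas the paper leaves the final recombination of \eqref{equ:time_deri_mag}--\eqref{equ:time_deri_E2} implicit.
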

\begin{proof}
First, we have the following equality in $\mathcal{D}'([0,T[)$:
\begin{equation}
\label{equ:time_deri_mag}
-\dfrac{\eps}{\mu_0 m} \dfrac{\mathrm{d}}{\mathrm{d}t}\int_{\R^2}B^\eps\cdot B_1\mathrm{d}x_\perp = -\dfrac{\eps}{\mu_0 m}\int_{\R^2}\partial_t B^\eps\cdot B_1\mathrm{d}x_\perp  -\dfrac{\eps}{\mu_0 m}\int_{\R^2}B^\eps\cdot \partial_t B_1\mathrm{d}x_\perp.
\end{equation}

Next, by straightforward computations and using the equations for the densities $n^\eps(t,x_\perp)$ in \eqref{equ:EquDensityEps} and $n(t,x_\perp)$ in \eqref{equ:lim_mod_2dx} respectively, we have in $\mathcal{D}'([0,T[)$ that
\begin{equation}
\label{equ:time_deri_dens}
\begin{split}
&-\dfrac{\mathrm{d}}{\mathrm{d} t}\int_{\R^2}{\sigma(1+\ln n)(n^\eps-n)}\mathrm{d}x_\perp = -\sigma \int_{\R^2} \partial_t n  \dfrac{n^\eps - n}{n} \mathrm{d}x_\perp - \sigma \int_{\R^2}(1+\ln n)(\partial_t n^\eps -\partial_t n)\mathrm{d}x_\perp\\
&= \sigma \int_{\R^2}\Divx \left( \dfrac{n e}{\omega_c}\wedge k[n]\right)\dfrac{n^\eps -n}{n}\mathrm{d}x_\perp\\
&\quad +  \int_{\R^2}\sigma \dfrac{\nabla_x n}{n}\cdot \left[ \dfrac{n e}{\omega_c}\wedge k[n] - \dfrac{e}{\omega_c}\wedge\left( \sigma\nabla_x n^\eps - \dfrac{\epsilon_0}{m} \mathcal{A}(E^\eps,E^\eps) -\dfrac{q}{m}DE^\eps\right)\right]\mathrm{d}x_\perp\\
&\quad + \dfrac{1}{\mu_0 m}\int_{\R^2} \sigma\dfrac{\nabla_x n}{n}\cdot \left( \dfrac{e}{\omega_c}\wedge \mathcal{A}(B^\eps,B^\eps)\right)\mathrm{d}x_\perp - \dfrac{\epsilon_0 \eps}{m}\int_{\R^2}\sigma\dfrac{\nabla_x n}{n} \cdot \left(\dfrac{e}{\omega_c}\wedge \partial_t(E^\eps\wedge B^\eps) \right)\mathrm{d}x_\perp \\
&\quad -  \int_{\R^2}\sigma\dfrac{\nabla_x n}{n}\cdot \left( \dfrac{e}{\omega_c}\wedge R^\eps\right)\mathrm{d}x_\perp -\int_{\R^2}\sigma\dfrac{\nabla_x n}{n}\cdot \left(\dfrac{e}{\omega_c}\wedge F^\eps \right)\mathrm{d}x_\perp.
\end{split}
\end{equation}

Finally, we estimate
\begin{equation}
\label{equ:time_deri_E}
\begin{split}
- \dfrac{\epsilon_0}{m}\dfrac{\mathrm{d}}{\mathrm{d}t}\int_{\R^2} E\cdot (E^\eps -E)\mathrm{d}x_\perp 
= - \dfrac{\epsilon_0}{m}\int_{\R^2} \partial_t E\cdot (E^\eps -E)\mathrm{d}x_\perp - \dfrac{\epsilon_0}{m}\int_{\R^2} E\cdot (\partial_t E^\eps - \partial_t E)\mathrm{d}x_\perp.
\end{split}
\end{equation}
Using the Maxwell equations \eqref{equ:MaxwellEps}, we have
\begin{align*}
\int_{\R^2}\cur B_1\cdot (E^\eps -E)\mathrm{d}x_\perp &= \int_{\R^2} B_1\cdot \cur(E^\eps -E)\mathrm{d}x_\perp\\
&=-\eps\int_{\R^2} B_1\cdot \partial_t B^\eps \mathrm{d}x_\perp,\quad \text{since}\,\,\,\, \cur E =0.
\end{align*}
Therefore, using the last equation in \eqref{equ:lim_mod_2dx}, we deduce that the first term in \eqref{equ:time_deri_E} can be written as follows:
\begin{equation}
\label{equ:time_deri_E1}
- \dfrac{\epsilon_0}{m}\int_{\R^2} \partial_t E\cdot (E^\eps -E)\mathrm{d}x_\perp = \dfrac{\eps}{m\mu_0}\int_{\R^2} B_1\cdot \partial_t B^\eps \mathrm{d}x_\perp + \dfrac{q}{m}\int_{\R^2}\left(\dfrac{e}{\omega_c}\wedge k[n]\right)\cdot n(E^\eps -E)\mathrm{d}x_\perp.
\end{equation}
On the other hand, from $\epsilon_0 \partial_t E^\eps = \dfrac{1}{\mu_0 \eps}\cur B^\eps - \dfrac{q}{\eps}\int_{\R^3}vf^\eps\mathrm{d}v$ in the Maxwell equations \eqref{equ:MaxwellEps} and using \eqref{equ:current}, we deduce that
\begin{align*}
\epsilon_0 \partial_t E^\eps &= \dfrac{1}{\mu_0 \eps}\cur B^\eps - \dfrac{qe}{\omega_c}\wedge \left[ \sigma \nabla_x n^\eps - \dfrac{\epsilon_0}{m}\mathcal{A}(E^\eps,E^\eps) - \dfrac{q}{m}DE^\eps\right] \\
&\quad + \dfrac{qe}{\omega_c}\wedge \dfrac{1}{m\mu_0}\mathcal{A}(B^\eps,B^\eps) - \dfrac{qe}{\omega_c}\wedge \left(\dfrac{\epsilon_0\eps}{m}\partial_t(E^\eps\wedge B^\eps)\right)\\
&\quad - \dfrac{qe}{\omega_c}\wedge R^\eps - \dfrac{qe}{\omega_c}\wedge F^\eps - q \int_{\R^3}\dfrac{(v\cdot e)e}{\eps}f^\eps\mathrm{d}v.
\end{align*}
Therefore, for the second term in \eqref{equ:time_deri_E}, using $\cur E =0$ and $E\cdot e =0$ in \eqref{equ:lim_mod_2dx} for the electric field $E$, we have
\begin{equation}
\label{equ:time_deri_E2}
\begin{split}
&- \dfrac{\epsilon_0}{m}\int_{\R^2} E\cdot (\partial_t E^\eps - \partial_t E)\mathrm{d}x_\perp \\
&=  \dfrac{q}{m}\int_{\R^2} E\cdot \left[\dfrac{e}{\omega_c} \wedge (\sigma \nabla_x n^\eps - \dfrac{\epsilon_0}{m}\mathcal{A}(E^\eps,E^\eps) - \dfrac{q}{m}DE^\eps) - \dfrac{n e}{\omega_c}\wedge k[n]\right]\mathrm{d}x_\perp \\
&\quad - \dfrac{q}{m}\int_{\R^2}E\cdot\left(\dfrac{e}{\omega_c}\wedge \dfrac{1}{m\mu_0}\mathcal{A}(B^\eps,B^\eps)\right)\mathrm{d}x_\perp + \dfrac{q}{m}\int_{\R^2}E\cdot\left(\dfrac{e}{\omega_c}\wedge (\dfrac{\epsilon_0\eps}{m}\partial_t(E^\eps\wedge B^\eps))\right)\mathrm{d}x_\perp\\
&\quad + \dfrac{q}{m}\int_{\R^2}E\cdot\left(\dfrac{e}{\omega_c}\wedge R^\eps\right)\mathrm{d}x_\perp  + \dfrac{q}{m}\int_{\R^2}E\cdot\left(\dfrac{e}{\omega_c}\wedge F^\eps\right)\mathrm{d}x_\perp.
\end{split}
\end{equation}
Combining \eqref{equ:time_deri_mag}-\eqref{equ:time_deri_E2}, we obtain the desired result.
\end{proof}

We need to compute the following terms which appears in Proposition \ref{TimeDeriv}
\begin{align*}
 \int_{\R^2} W[n]\cdot \left( \dfrac{\epsilon_0\eps}{m} \partial_t(E^\eps\wedge B^\eps)-\dfrac{\epsilon_0}{m}\mathcal{A}(E^\eps,E^\eps) -\dfrac{1}{m\mu_0}\mathcal{A}(B^\eps,B^\eps)\right)\mathrm{d}x_\perp.
\end{align*}
\begin{lemma}
\label{last_contribu}
We have 
\begin{align*}
&\int_{\R^2} W[n] \cdot \left( \dfrac{\epsilon_0\eps}{m} \partial_t(E^\eps\wedge B^\eps)-\dfrac{\epsilon_0}{m}\mathcal{A}(E^\eps,E^\eps)-\dfrac{1}{m\mu_0}\mathcal{A}(B^\eps,B^\eps)\right)\mathrm{d}x_\perp \\
&= \dfrac{\mathrm{d}}{\mathrm{d}t}\int_{\R^2} W[n] \cdot \left[ \epsilon_0\eps  ((E^\eps - E)\wedge (B^\eps -\eps B_1))\right]\mathrm{d}x_\perp \\
&\quad -\int_{\R^2} \partial_t W[n] \cdot \left[ \epsilon_0\eps  ((E^\eps - E)\wedge (B^\eps -\eps B_1))\right]\mathrm{d}x_\perp \\
&\quad  + \int_{\R^2} D_x W[n] : \epsilon_0 \left((E^\eps-E)\otimes(E^\eps-E) - \dfrac{1}{2}|E^\eps-E|^2 I_3\right)\mathrm{d}x_\perp\\
&\quad + \int_{\R^2} D_x W[n] : \dfrac{1}{\mu_0}\left((B^\eps-\eps B_1)\otimes(B^\eps - \eps B_1) - \dfrac{1}{2}|B^\eps-\eps B_1|^2 I_3\right) \mathrm{d}x_\perp\\
&\quad - \dfrac{q}{m}\int_{\R^2} W[n] \cdot ((n-D) E^\eps)\mathrm{d}x_\perp  - \dfrac{\epsilon_0}{m}\int_{\R^2}W[n]\cdot (E\Divx (E^\eps -E))\mathrm{d}x_\perp\\
 &\quad +\dfrac{\epsilon_0\eps}{m}\int_{\R^2}W[n]\cdot \left((E^\eps-E)\wedge \partial_t(\eps B_1)\right)\mathrm{d}x_\perp - \dfrac{1}{m}\int_{\R^2} (nW[n])\cdot (J^\eps\wedge(\eps B_1))\mathrm{d}x_\perp.
\end{align*}
\end{lemma}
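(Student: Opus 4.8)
The plan is to rewrite the three field contributions entirely in terms of the \emph{modulated} fields $E^\eps-E$ and $B^\eps-\eps B_1$, so that every genuinely quadratic piece becomes a square of a difference (to be controlled later by the modulated energy), while the leftover pieces reduce, via the Maxwell equations, to the explicit source terms. Since the leading magnetic field of the limit system vanishes ($B\equiv 0$), the natural magnetic scale against which to modulate $B^\eps$ is $\eps B_1$. First I would substitute $E^\eps=(E^\eps-E)+E$ and $B^\eps=(B^\eps-\eps B_1)+\eps B_1$ into $\mathcal{A}(E^\eps,E^\eps)$, $\mathcal{A}(B^\eps,B^\eps)$ and $E^\eps\wedge B^\eps$, and expand using the bilinearity of $\mathcal{A}$ and of the wedge product. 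For each object this produces a pure modulated-square piece, a cross piece, and a pure limit piece.

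For the modulated-square pieces I would use $\mathcal{A}(u,u)=\Divx\!\big(u\otimes u-\tfrac12|u|^2 I_3\big)$ and integrate by parts against $W[n]$: the terms $-\tfrac{\epsilon_0}{m}\mathcal{A}(E^\eps-E,E^\eps-E)$ and $-\tfrac{1}{m\mu_0}\mathcal{A}(B^\eps-\eps B_1,B^\eps-\eps B_1)$ turn into the two $D_xW[n]\!:\!(\cdots)$ stress-tensor integrals. For the time-derivative contribution I would split $\partial_t\big((E^\eps-E)\wedge(B^\eps-\eps B_1)\big)$ and move the time derivative outside the $x_\perp$-integral, which yields the total-derivative term together with the $-\int\partial_t W[n]\cdot[\,\cdots]$ term. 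This accounts for the first four lines of the claimed identity.

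The remaining cross and limit pieces are where the real work lies. Here I would invoke $\mathcal{A}(u,u)=u\,\Divx u-u\wedge\cur u$ together with the Maxwell equations. For the limit electric field I use $\cur E=0$ and $\epsilon_0\Divx E=q(n-D)$, so that $\mathcal{A}(E,E)=\tfrac{q}{\epsilon_0}(n-D)E$ and the electric cross term produces the factor $E\,\Divx(E^\eps-E)$; these give the terms $-\tfrac{q}{m}W[n]\cdot((n-D)E^\eps)$ and $-\tfrac{\epsilon_0}{m}W[n]\cdot(E\Divx(E^\eps-E))$. For the time-derivative cross terms I substitute Faraday's law $\eps\partial_t B^\eps=-\cur E^\eps$ and Ampère's law $\mu_0\epsilon_0\eps\partial_t E^\eps=\cur B^\eps-\mu_0 J^\eps$ for the kinetic fields, and the limit Ampère law $\cur B_1=\mu_0\epsilon_0\partial_t E+\mu_0 q\,nW[n]$ (the last equation of \eqref{equ:lim_mod_2dx}) to re-express the $\cur B_1$ contributions; it is this last substitution that is the mechanism introducing the $nW[n]$ weight in the $J^\eps\wedge\eps B_1$ term. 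Repeated use of $\Divx(\xi\wedge\eta)=\eta\cdot\cur\xi-\xi\cdot\cur\eta$ and integration by parts then collapses the surviving cross/limit contributions into the $(E^\eps-E)\wedge\partial_t(\eps B_1)$ term and the $J^\eps\wedge\eps B_1$ term.

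The main obstacle is the bookkeeping of the cross terms: several spurious contributions, typically of the form $(n^\eps-n)(E^\eps-E)$ and mixed electromagnetic products, are generated in more than one place and must be shown to cancel before the identity closes; keeping track of the powers of $\eps$ and of the sign conventions in the vector identities—in particular which curl vanishes because $\cur E=0$ and which one becomes $-\eps\partial_t B^\eps$—is the delicate point. No compactness or defect-measure argument is needed here, since $(E^\eps,B^\eps)$ are fixed functions solving Maxwell's equations in $\mathcal{D}'$; the statement is a purely algebraic rearrangement, valid in $\mathcal{D}'([0,T[)$.
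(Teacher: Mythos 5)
Your plan coincides with the paper's own proof: the paper likewise substitutes $E^\eps=(E^\eps-E)+E$, $B^\eps=(B^\eps-\eps B_1)+\eps B_1$, expands $\mathcal{A}$ and the Poynting term bilinearly, uses Faraday/Amp\`ere for the kinetic fields, Gauss's law and $\cur E=0$ for the limit field, and the limit Amp\`ere law for $\cur B_1$, then integrates the modulated $\mathcal{A}$-terms by parts against $W[n]$ to produce the stress-tensor integrals, with the surviving triple-product term $\bigl(qnW[n]\wedge(B^\eps-\eps B_1)\bigr)\cdot W[n]$ vanishing identically. The only quibble is attribution of the $nW[n]$ weight on the $J^\eps\wedge\eps B_1$ term: in the computation that term arises directly from the kinetic Amp\`ere law with weight $W[n]$ alone (the paper's lemma statement and its displayed pointwise identity actually disagree by this factor of $n$), but this does not affect the validity of your approach.
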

\begin{proof}
The evolution in time of $\epsilon_0\eps \partial_t (E^\eps\wedge B^\eps)$ can be expressing by using the Maxwell equations \eqref{equ:MaxwellEps} as follows:
\begin{equation}
\label{equ:mixedEBeps}
\begin{split}
\epsilon_0\eps \partial_t (E^\eps\wedge B^\eps) &= \epsilon_0\eps \partial_t ((E^\eps - E)\wedge (B^\eps -\eps B_1)) + \epsilon_0\eps \partial_t (E\wedge \eps B_1)\\
&\quad + \epsilon_0\eps \partial_t(E\wedge(B^\eps -\eps B_1)) + \epsilon_0\eps \partial_t ((E^\eps-E)\wedge (\eps B_1))\\
 &= \epsilon_0\eps \partial_t ((E^\eps - E)\wedge (B^\eps -\eps B_1)) + \epsilon_0\eps \partial_t (E\wedge \eps B_1)\\
&\quad + \epsilon_0\eps \partial_t E \wedge (B^\eps -\eps B_1) + \epsilon_0 E\wedge (-\cur (E^\eps -E) - \eps \partial_t(\eps B_1))\\
&\quad + \left(\dfrac{1}{\mu_0}\cur (B^\eps - \eps B_1) - J^\eps + \eps q \dfrac{n e}{\omega_c}\wedge k[n]  \right)\wedge (\eps B_1)\\
&\quad + \epsilon_0\eps (E^\eps - E)\wedge \partial_t (\eps B_1).
\end{split}
\end{equation}
We will now compute $\epsilon_0\eps \partial_t(E\wedge \eps B_1)$. First, using the formula of $\mathcal{A}(u,u)$ in \eqref{equ:quad_field}, we have
\begin{equation}
\label{equ:EEeps}
\begin{split}
\epsilon_0 \mathcal{A}(E^\eps,E^\eps) &= \epsilon_0 \mathcal{A}(E^\eps-E,E^\eps-E) + \epsilon_0 \mathcal{A}(E,E)\\
&\quad + \epsilon_0 (E^\eps - E)\Divx E + \epsilon_0 E\Divx(E^\eps-E) - \epsilon_0 E\wedge \cur(E^\eps-E),
\end{split}
\end{equation}
and
\begin{equation}
\label{equ:BBeps}
\begin{split}
\dfrac{1}{\mu_0}\mathcal{A}(B^\eps,B^\eps) &= \dfrac{1}{\mu_0}\mathcal{A}(B^\eps-\eps B_1,B^\eps - \eps B_1)+ \dfrac{1}{\mu_0}\mathcal{A}(\eps B_1,\eps B_1)\\
&\quad -\dfrac{1}{\mu_0}(B^\eps -\eps B_1)\wedge \cur (\eps B_1) - \dfrac{1}{\mu_0}(\eps B_1)\wedge \cur(B^\eps - \eps B_1).
\end{split}
\end{equation}
We then compute the following contribution
\begin{equation}
\label{equ:mixedEB}
\begin{split}
&\epsilon_0\eps \partial_t (E\wedge \eps B_1) - \epsilon_0 \mathcal{A}(E,E) - \dfrac{1}{\mu_0}\mathcal{A}(\eps B_1,\eps B_1)\\&= \eps \left( \dfrac{1}{\mu_0}\cur B_1 - q \dfrac{n e}{\omega_c}\wedge k[n] \right)\wedge (\eps B_1) + \epsilon_0\eps E\wedge \partial_t(\eps B_1)\\
&\quad - \epsilon_0 E \Divx E + \dfrac{1}{\mu_0}(\eps B_1)\wedge \cur (\eps B_1)\\
&= -\eps q \left( \dfrac{n e}{\omega_c}\wedge k[n]\right) \wedge (\eps B_1) + \epsilon_0\eps E\wedge \partial_t(\eps B_1)
 - \epsilon_0 E \Divx E.
 \end{split}
\end{equation}
Therefore, from \eqref{equ:EEeps}, \eqref{equ:BBeps} and \eqref{equ:mixedEB}, we deduce that
\begin{equation}
\label{equ:mixedEB_bis}
\begin{split}
&\epsilon_0\eps \partial_t (E\wedge \eps B_1) \\
&= \epsilon_0 \mathcal{A}(E^\eps,E^\eps) + \dfrac{1}{\mu_0}\mathcal{A}(B^\eps,B^\eps)- \epsilon_0 \mathcal{A}(E^\eps-E,E^\eps-E) - \dfrac{1}{\mu_0}\mathcal{A}(B^\eps-\eps B_1,B^\eps - \eps B_1)\\
&\quad -  \epsilon_0 (E^\eps - E)\Divx E - \epsilon_0 E\Divx(E^\eps-E) + \epsilon_0 E\wedge \cur(E^\eps-E)\\
&\quad + \dfrac{1}{\mu_0}(B^\eps -\eps B_1)\wedge \cur (\eps B_1) + \dfrac{1}{\mu_0}(\eps B_1)\wedge \cur(B^\eps - \eps B_1)\\
&\quad -\eps q \left( \dfrac{n e}{\omega_c}\wedge k[n]\right) \wedge (\eps B_1) + \epsilon_0\eps E\wedge \partial_t(\eps B_1)
 - \epsilon_0 E \Divx E.
\end{split}
\end{equation}
Combining \eqref{equ:mixedEBeps} and \eqref{equ:mixedEB_bis}, we obtain
\begin{equation*}
\begin{split}
& \epsilon_0\eps \partial_t (E^\eps\wedge B^\eps) - \epsilon_0 \mathcal{A}(E^\eps,E^\eps) - \dfrac{1}{\mu_0}\mathcal{A}(B^\eps,B^\eps)\\
&=\epsilon_0\eps \partial_t ((E^\eps - E)\wedge (B^\eps -\eps B_1)) - \epsilon_0 \mathcal{A}(E^\eps-E,E^\eps-E) - \dfrac{1}{\mu_0}\mathcal{A}(B^\eps-\eps B_1,B^\eps - \eps B_1)\\
&\quad - q(n-D)E^\eps - \epsilon_0 E \Divx (E^\eps -E) - \left( q\dfrac{n e}{\omega_c}\wedge k[n] \right)\wedge (B^\eps - \eps B_1)\\
&\quad +\epsilon_0\eps (E^\eps-E)\wedge \partial_t(\eps B_1) - J^\eps\wedge(\eps B_1),
\end{split}
\end{equation*}
which yields the desired result.
\end{proof}

Now we estimate the following contributions in Proposition \ref{TimeDeriv} and Lemma \ref{last_contribu}
\begin{align*}
&\dfrac{q}{m}\int_{\R^2}nW[n]\cdot (E^\eps -E)\mathrm{d}x_\perp + \sigma \int_{\R^2}\Divx (n W[n])\dfrac{n^\eps -n}{n}\mathrm{d}x_\perp \\
&\quad + \int_{\R^2}W[n] \cdot  (\sigma \nabla_x n^\eps -\dfrac{q}{m}DE^\eps)\mathrm{d}x_\perp
- \dfrac{q}{m}\int_{\R^2}W[n] \cdot ((n-D) E^\eps)\mathrm{d}x_\perp\\  
&\quad - \dfrac{\epsilon_0}{m}\int_{\R^2} W[n]\cdot (E\Divx (E^\eps -E))\mathrm{d}x_\perp\\
&= \dfrac{q}{m}\int_{\R^2}nW[n]\cdot (E^\eps -E)\mathrm{d}x_\perp +\sigma \int_{\R^2}\Divx (W[n])(n^\eps -n)\mathrm{d}x_\perp \\
&\quad + \int_{\R^2} W[n]\cdot\sigma \dfrac{\nabla_x n}{n}(n^\eps -n )\mathrm{d}x_\perp
- \sigma \int_{\R^2}\Divx(W[n]) n^\eps\mathrm{d}x_\perp\\
&\quad -  \dfrac{q}{m}\int_{\R^2}n W[n]\cdot  E^\eps\mathrm{d}x_\perp  - \dfrac{q}{m}\int_{\R^2}W[n]\cdot  (n^\eps -n)E\mathrm{d}x_\perp\\
& = - \dfrac{q}{m}\int_{\R^2}n W[n]\cdot  E\mathrm{d}x_\perp - \sigma \int_{\R^2}\Divx (W[n])n\mathrm{d}x_\perp\\
&\quad +\dfrac{q}{m}\int_{\R^2}W[n]\cdot E (n^\eps -n)\mathrm{d}x_\perp - \dfrac{q}{m}\int_{\R^2}W[n]\cdot  (n^\eps -n)E\mathrm{d}x_\perp\\
&= 0.
\end{align*}

Coming back to \eqref{BalModEnerDens}, the modulated energy balance becomes
\begin{equation}
\label{BalModEnerDensBis}
\begin{split}
&\calE[n^\eps(t)|n(t)] +  \sigma\int_{\R^2}\int_{\R^3}{n^\eps(t) M h\left(\dfrac{f^\eps(t)}{n^\eps(t) M} \right)}\mathrm{d}v\mathrm{d}x_\perp + \dfrac{1}{2}\int_{\R^2}\int_{\Sp^2}\mathrm{d}m^\eps\\
&\quad + \dfrac{1}{2}\int_{\R^2} \mathrm{d}\left( \dfrac{\epsilon_0}{m}tr(\mu^\eps_E) + \dfrac{\mu_0}{m}tr(\mu^\eps_B)\right)  + \dfrac{1}{\eps\tau}\int_0^t\int_{\R^2}\int_{\R^3}{\dfrac{|\sigma \nabla_v f^\eps + v f^\eps|^2}{f^\eps}}\mathrm{d}v\mathrm{d}x_\perp\mathrm{d}s\\
&\leq \calE[n^\eps(0)|n(0)] +  \sigma\int_{\R^2}\int_{\R^3}{n^\eps(0) M h\left(\dfrac{f^\eps(0)}{n^\eps(0) M} \right)}\mathrm{d}v\mathrm{d}x_\perp \\
&\quad + \dfrac{\eps^2}{2\mu_0 m}\left(\int_{\R^2}|B_1(t)|^2\mathrm{d}x_\perp - \int_{\R^2}|B_1(0)|^2\mathrm{d}x_\perp \right) - \dfrac{\eps}{\mu_0 m}\int_0^t\int_{\R^2} B^\eps\cdot \partial_s B_1\mathrm{d}x_\perp\mathrm{d}s\\
&\quad + \int_0^t\dfrac{\mathrm{d}}{\mathrm{d}s}\int_{\R^2} W[n] \cdot \left[ \epsilon_0\eps  ((E^\eps - E)\wedge (B^\eps -\eps B_1))\right]\mathrm{d}x_\perp\mathrm{d}s \\
&\quad -\int_0^t\int_{\R^2} \partial_s W[n] \cdot \left[ \epsilon_0\eps  ((E^\eps - E)\wedge (B^\eps -\eps B_1))\right]\mathrm{d}x_\perp\mathrm{d}s \\
&\quad  + \int_0^t\int_{\R^2} D_x W[n] : \epsilon_0 \left((E^\eps-E)\otimes(E^\eps-E) - \dfrac{1}{2}|E^\eps-E|^2 I_3\right)\mathrm{d}x_\perp\mathrm{d}s\\
&\quad + \int_0^t\int_{\R^2} D_x W[n] : \dfrac{1}{\mu_0}\left((B^\eps-\eps B_1)\otimes(B^\eps - \eps B_1) - \dfrac{1}{2}|B^\eps-\eps B_1|^2 I_3\right) \mathrm{d}x_\perp\mathrm{d}s\\
 &\quad +\dfrac{\epsilon_0\eps}{m}\int_0^t\int_{\R^2}W[n]\cdot \left((E^\eps-E)\wedge \partial_s(\eps B_1)\right)\mathrm{d}x_\perp\mathrm{d}s \\
 &\quad - \dfrac{1}{m}\int_0^t\int_{\R^2} (nW[n])\cdot (J^\eps\wedge(\eps B_1))\mathrm{d}x_\perp\mathrm{d}s\\
&\quad + \int_0^t\int_{\R^2}W[n]\cdot R^\eps\mathrm{d}x_\perp\mathrm{d}s
+ \int_0^t\int_{\R^2}W[n]\cdot F^\eps\mathrm{d}x_\perp\mathrm{d}s,
\end{split}
\end{equation}
where
\begin{equation}
\label{equ:Feps}
\begin{split}
&\int_0^t \int_{\R^2} W[n]\cdot F^\eps\mathrm{d}x_\perp\mathrm{d}s \\
&= - \int_0^t\int_{\R^2} D_x W[n] : \left(\int_{\R^3}(\sigma\nabla_v f^\eps + v f^\eps)\otimes v \mathrm{d}v\right) \mathrm{d}x_\perp\mathrm{d}s \\
&\quad + \eps \int_0^t \dfrac{\mathrm{d}}{\mathrm{d}s}\int_{\R^2} W[n]\cdot \left(\int_{\R^3}v f^\eps\mathrm{d}v\right) \mathrm{d}x_\perp\mathrm{d}s - \eps\int_0^t \int_{\R^2}\partial_s W[n]\cdot \left(\int_{\R^3}v f^\eps\mathrm{d}v \right)\mathrm{d}x_\perp\mathrm{d}s \\
&\quad + \dfrac{1}{\tau} \int_0^t\int_{\R^2} W[n]\cdot \left(\int_{\R^3} vf^\eps\mathrm{d}v\right)\mathrm{d}x_\perp\mathrm{d}s,
\end{split}
\end{equation}
and the integral of the group of defect measures $R^\eps$ can be rewritten as
\begin{equation}
\label{equ:esti_defect}
\begin{split}
&\int_0^t \int_{\R^2} W[n]\cdot R^\eps\mathrm{d}x_\perp\mathrm{d}s
\\
&= \int_0^t\int_{\R^2} D_x W[n] : \left[ \dfrac{\epsilon_0}{m}\left( \mu^\eps_E - \dfrac{1}{2}tr(\mu^\eps_E)I_3 \right) + \dfrac{1}{m\mu_0}\left(\mu^\eps_B - \dfrac{1}{2}tr(\mu^\eps_B)I_3 \right) \right]\\
&\quad + \dfrac{\epsilon_0\eps}{m}\int_0^t\dfrac{\mathrm{d}}{\mathrm{d}s}\int_{\R^2}W[n]\cdot \mathrm{d}(\mu^\eps_{EB}) - \dfrac{\epsilon_0\eps}{m}\int_0^t\int_{\R^2}\partial_s W[n]\cdot \mathrm{d}(\mu^\eps_{EB})\\
&\quad - \int_0^t\int_{\R^2} D_x W[n] : \int_{\Sp^2}\sigma\otimes\sigma \mathrm{d}m^\eps.
\end{split}
\end{equation}
 In order to apply the Gronwall lemma, we estimate the terms in the right hand side of \eqref{BalModEnerDensBis}.
\begin{coro}
\label{Esti_modu_Gron}
The modulated energy $\mathcal{E}[n^\eps(t)|n(t)]$ satisfies the Gronwall inequality
\begin{equation}
\label{Gronwall_mod}
\begin{split}
&\calE[n^\eps(t)|n(t)] +  \sigma\int_{\R^2}\int_{\R^3}{n^\eps(t) M h\left(\dfrac{f^\eps(t)}{n^\eps(t) M} \right)}\mathrm{d}v\mathrm{d}x_\perp + \dfrac{1}{2\eps\tau}\int_0^t\int_{\R^2}\int_{\R^3}{\dfrac{|\sigma \nabla_v f^\eps + v f^\eps|^2}{f^\eps}}\mathrm{d}v\mathrm{d}x_\perp\mathrm{d}s\\
&\quad +  \dfrac{1}{2}\int_{\R^2}\int_{\Sp^2}\mathrm{d}m^\eps+
\dfrac{1}{2}\int_{\R^2} \mathrm{d}\left( \dfrac{\epsilon_0}{m}tr(\mu^\eps_E) + \dfrac{\mu_0}{m}tr(\mu^\eps_B)\right)  \\
&\leq (1+ C_T \eps)\calE[n^\eps(0)|n(0)] +  \sigma\int_{\R^2}\int_{\R^3}{n^\eps(0) M h\left(\dfrac{f^\eps(0)}{n^\eps(0) M} \right)}\mathrm{d}v\mathrm{d}x_\perp  \\ &\quad + C_T \int_0^t \mathcal{E}[n^\eps(s)|n(s)]\mathrm{d}s 
 + C_T \eps \mathcal{E}[n^\eps(t)|n(t)] + C_T \eps \int_{\R^2}\dfrac{1}{2}\mathrm{d}\left(\dfrac{\epsilon_0}{m}tr(\mu^\eps_E) + \dfrac{\mu_0}{m} tr(\mu^\eps_B) \right) \\
&\quad + C_T ( \int_0^t\int_{\R^2}\int_{\Sp^2}\dfrac{1}{2}\mathrm{d}m^\eps + \int_0^t\int_{\R^2}\dfrac{1}{2}\mathrm{d}\left(\dfrac{\epsilon_0}{m}tr(\mu^\eps_E) + \dfrac{\mu_0}{m} tr(\mu^\eps_B) \right)) + C_T \sqrt{\eps}.
\end{split}
\end{equation}
\end{coro}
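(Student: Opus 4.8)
The plan is to start from the modulated-energy balance \eqref{BalModEnerDensBis}, into which the expansions \eqref{equ:Feps} and \eqref{equ:esti_defect} have already been inserted and the consistency cancellation (the long display immediately preceding the statement) has been used. What then remains is to bound every term on the right-hand side of \eqref{BalModEnerDensBis} by one of four types: (i) a genuine error of size $O(\sqrt\eps)$; (ii) a Gronwall term $C_T\int_0^t\calE[n^\eps(s)|n(s)]\,\md s$; (iii) a term proportional to $\eps$ times a quantity already on the left-hand side (the instantaneous modulated energy $\calE[n^\eps(t)|n(t)]$ and the instantaneous defect masses), to be absorbed for $\eps$ small; and (iv) time-integrated defect-measure terms feeding the Gronwall argument. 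Throughout I use that the regularity from Propositions \ref{existsol_lim_mod1} and \ref{existsol_lim_mod2} yields $W[n],\,D_xW[n],\,\partial_t W[n]\in L^\infty((0,T)\times\R^2)$ and $B_1,\,\partial_t B_1\in L^\infty(0,T;L^2(\R^2))$ with $B_1\in L^\infty$, and that by \eqref{equ:mod_energy} the modulated energy controls $\|E^\eps-E\|_{L^2(\R^2)}^2$ and $\|B^\eps-\eps B_1\|_{L^2(\R^2)}^2$.

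The quadratic field terms supply the Gronwall structure. Since $D_xW[n]\in L^\infty$, the contraction $\int_0^t\int D_xW[n]:\epsilon_0\big((E^\eps-E)\otimes(E^\eps-E)-\tfrac12|E^\eps-E|^2I_3\big)$ and its $B^\eps-\eps B_1$ analogue are bounded by $C_T\int_0^t\big(\|E^\eps-E\|_{L^2}^2+\|B^\eps-\eps B_1\|_{L^2}^2\big)\,\md s\le C_T\int_0^t\calE[n^\eps|n]\,\md s$. The total-derivative contribution $\int_0^t\tfrac{\md}{\md s}\int W[n]\cdot[\epsilon_0\eps(E^\eps-E)\wedge(B^\eps-\eps B_1)]$ reduces to its endpoints, each of which is $\le C\eps\,\calE$ by Cauchy--Schwarz, producing the absorbable term $C_T\eps\,\calE[n^\eps(t)|n(t)]$ at $s=t$ and, at $s=0$, the factor $(1+C_T\eps)$ in front of $\calE[n^\eps_0|n_0]$. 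The terms carrying an explicit $\eps$ (the $\eps^2$ magnetic difference, $\tfrac{\eps}{\mu_0 m}\int B^\eps\cdot\partial_s B_1$ after writing $B^\eps=(B^\eps-\eps B_1)+\eps B_1$, the $\partial_s W[n]$ remainders, and $\tfrac{\epsilon_0\eps}{m}\int W[n]\cdot(E^\eps-E)\wedge\partial_s(\eps B_1)$) are all $O(\eps)$ or feed $\eps\int_0^t\calE$, using $\|B^\eps-\eps B_1\|_{L^2}\le C\sqrt\calE$ and Young's inequality. For the defect group \eqref{equ:esti_defect}, nonnegativity of $\mu^\eps_E,\mu^\eps_B$ bounds their entries by their traces, so the $D_xW[n]$-contractions are controlled by $C_T\int_0^t\int \md(tr\mu^\eps_E+tr\mu^\eps_B)$ and $C_T\int_0^t\int_{\Sp^2}\md m^\eps$; the $\mu^\eps_{EB}$ endpoint terms, bounded via the Cauchy--Schwarz estimate $\int \md\mu^\eps_{EB}\le\tfrac12\int(tr\,\md\mu^\eps_E+tr\,\md\mu^\eps_B)$ from Subsection \ref{Prio_est}, yield the absorbable contribution $C_T\eps\int\tfrac12 \md(\cdots)$.

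The delicate part, which I expect to be the main obstacle, is the kinetic/current group coming from \eqref{equ:Feps} and from the last term of Lemma \ref{last_contribu}. The crucial identity is $\int_{\R^3}vf^\eps\,\md v=\int_{\R^3}(\sigma\nabla_v f^\eps+vf^\eps)\,\md v$ (the gradient integrates to zero), which by Cauchy--Schwarz gives $\big|\int vf^\eps\,\md v\big|\le\big(\int \tfrac{|\sigma\nabla_v f^\eps+vf^\eps|^2}{f^\eps}\md v\big)^{1/2}(n^\eps)^{1/2}$, and likewise $\big|\int(\sigma\nabla_v f^\eps+vf^\eps)\otimes v\,\md v\big|\le\big(\int\tfrac{|\sigma\nabla_v f^\eps+vf^\eps|^2}{f^\eps}\md v\big)^{1/2}\big(\int|v|^2f^\eps \md v\big)^{1/2}$. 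For the dissipation-type terms (the $D_xW[n]:\int(\sigma\nabla_v f^\eps+vf^\eps)\otimes v$ term, the relaxation term $\tfrac1\tau\int W[n]\cdot\int vf^\eps$, and the $J^\eps\wedge(\eps B_1)$ term) I bound the velocity integral by the dissipation density, apply Cauchy--Schwarz in $(x,s)$, use the time-integrated kinetic-energy and mass bounds of Lemma \ref{KinEne} for the companion factor, and invoke Young's inequality so that a fraction $\tfrac12$ of $\tfrac1{\eps\tau}\int_0^t\!\int\!\int\tfrac{|\sigma\nabla_v f^\eps+vf^\eps|^2}{f^\eps}$ is absorbed into the left-hand side---leaving exactly the $\tfrac{1}{2\eps\tau}$ coefficient of the statement---at the cost of an $O(\eps)$ error.

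The genuinely new $\sqrt\eps$ error arises from the single endpoint term $\eps\int W[n](t)\cdot\int vf^\eps(t)\,\md v$: here the time-integrated dissipation is unavailable, so I instead estimate $\int vf^\eps$ by $(n^\eps)^{1/2}\big(\int|v|^2f^\eps\big)^{1/2}$ and use the pointwise-in-time bound $\sup_t\int\!\int|v|^2f^\eps\le C/\eps$ from Lemma \ref{KinEne}; the prefactor $\eps$ then produces $\eps\cdot C\cdot(C/\eps)^{1/2}=C\sqrt\eps$, which is the origin of the $C_T\sqrt\eps$ term. Collecting the four classes of bounds and choosing the Young constants so that the absorbed dissipation and the $\eps\,\calE(t)$ pieces pass to the left-hand side yields precisely \eqref{Gronwall_mod}.
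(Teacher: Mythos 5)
Your proposal is correct and follows essentially the same route as the paper's proof: starting from \eqref{BalModEnerDensBis}, sorting every right-hand-side term into Gronwall contributions, $\eps$-absorbable instantaneous terms, defect-measure terms, and $O(\sqrt{\eps})$ errors, using the $L^\infty$ regularity of $W[n]$, $D_xW[n]$, $\partial_t W[n]$ and $B_1$, the trace and Cauchy--Schwarz controls on $\mu^\eps_E$, $\mu^\eps_B$, $\mu^\eps_{EB}$, the identity $\int_{\R^3}vf^\eps\,\mathrm{d}v=\int_{\R^3}(\sigma\nabla_v f^\eps+vf^\eps)\,\mathrm{d}v$ with Young's inequality to absorb half the Fokker--Planck dissipation, and Lemma \ref{KinEne} for the kinetic-energy factors. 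Your identification of the endpoint momentum term $\eps\int W[n](t)\cdot\int vf^\eps(t)\,\mathrm{d}v$ as the source of the $C_T\sqrt{\eps}$ error, via the $O(1/\eps)$ pointwise kinetic-energy bound, matches the paper's argument exactly (the paper phrases the same estimate as a pointwise Young inequality with weight $\sqrt{\eps}$).
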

\begin{proof}
Using the Cauchy-Schwarz inequality and the properties of solution of \eqref{equ:lim_mod_2dx}, we have 
\begin{align*}
&- \dfrac{\eps}{\mu_0 m}\int_0^t\int_{\R^2} B^\eps\cdot \partial_s B_1\mathrm{d}x_\perp\mathrm{d}s + \dfrac{\epsilon_0\eps}{m}\int_0^t\int_{\R^2}W[n]\cdot (E^\eps-E)\wedge \partial_s(\eps B_1)\mathrm{d}x_\perp\mathrm{d}s \\
&\leq \dfrac{1}{2\mu_0 m} \int_0^t \int_{\R^2}|B^\eps -\eps B_1|^2\mathrm{d}x_\perp\mathrm{d}s + \dfrac{\epsilon_0}{2m}\int_0^t\int_{\R^2} |E^\eps-E|^2\mathrm{d}x_\perp\mathrm{d}s\\
&\quad + \dfrac{\|k[n]\|_{L^\infty}}{B_0} \eps^2\int_0^t\int_{\R^2}(|B_1|^2+ |\partial_s B_1|^2)\mathrm{d}x_\perp\mathrm{d}s\\
&\leq \int_0^t\mathcal{E}[n^\eps(s)|n(s)]\mathrm{d}s + \mathcal{O}(\eps^2).
\end{align*}
 Similarly, by standard estimates, we also have
\begin{align*}
& \int_0^t\dfrac{\mathrm{d}}{\mathrm{d}s}\int_{\R^2} W[n] \cdot \left[ \epsilon_0\eps  ((E^\eps - E)\wedge (B^\eps -\eps B_1))\right]\mathrm{d}x_\perp\mathrm{d}s \\
&\quad -\int_0^t\int_{\R^2} \partial_s W[n] \cdot \left[ \epsilon_0\eps  ((E^\eps - E)\wedge (B^\eps -\eps B_1))\right]\mathrm{d}x_\perp\mathrm{d}s \\
&\quad  + \int_0^t\int_{\R^2} D_x W[n] : \epsilon_0 \left((E^\eps-E)\otimes(E^\eps-E) - \dfrac{1}{2}|E^\eps-E|^2 I_3\right)\mathrm{d}x_\perp\mathrm{d}s\\
&\quad + \int_0^t\int_{\R^2} D_x W[n] : \dfrac{1}{\mu_0}\left((B^\eps-\eps B_1)\otimes(B^\eps - \eps B_1) - \dfrac{1}{2}|B^\eps-\eps B_1|^2 I_3\right) \mathrm{d}x_\perp\mathrm{d}s\\
&\leq C_T \eps \mathcal{E}[n^\eps(t)|n(t)] + C_T \eps \mathcal{E}[n^\eps(0)|n(0)] + C_T \int_0^t \mathcal{E}[n^\eps(s)|n(s)]\mathrm{d}s,
\end{align*}
where $C_T$ is a positive constant depending on $\|k[n]\|_{W^{1,\infty}}$ and $B_0$.

We estimate now the defect terms appearing in \eqref{equ:esti_defect}. As $\mu^\eps_E$ and $\mu^\eps_B$ are nonnegative symmetric matrix, taking into account that for any matrix $A\in C^0(\R^2)^9$ we have the inequalities
\[
\left|\int_{\R^2} A(x_\perp) : \mathrm{d}(\mu^\eps_E)\right| \leq \int_{\R^2} |A(x_\perp)|\mathrm{d} tr(\mu^\eps_E),\quad\quad \left|\int_{\R^2} A(x_\perp) : \mathrm{d}(\mu^\eps_B)\right| \leq \int_{\R^2} |A(x_\perp)|\mathrm{d}tr(\mu^\eps_B),
\]
we deduce that
\begin{align*}
\int_0^t\int_{\R^2} D_x W[n] : \left[ \dfrac{\epsilon_0}{m}\left( \mu^\eps_E - \dfrac{1}{2}tr(\mu^\eps_E)I_3 \right) + \dfrac{1}{m\mu_0}\left(\mu^\eps_B - \dfrac{1}{2}tr(\mu^\eps_B)I_3 \right) \right]\\
\leq C_T \int_0^t \int_{\R^2} \mathrm{d}\left(\dfrac{\epsilon_0}{m}tr(\mu^\eps_E) + \dfrac{\mu_0}{m} tr(\mu^\eps_B) \right). 
\end{align*}
In the same way for any vector $a\in C^0(\R^2)^3$ we have
\[
\left| \int_{\R^2} a(x_\perp)\cdot \mathrm{d}\mu^\eps_{EB} \right| \leq \dfrac{1}{2}\int_{\R^2} |a(x_\perp)|\mathrm{d} tr(\mu^\eps_E) + \dfrac{1}{2}\int_{\R^2}|a(x_\perp)|\mathrm{d} tr(\mu^\eps_B),
\]
implying that 
\begin{align*}
&\dfrac{\epsilon_0\eps}{m}\int_0^t\dfrac{\mathrm{d}}{\mathrm{d}s}\int_{\R^2} W[n]\cdot \mathrm{d}\mu^\eps_{EB} - \dfrac{\epsilon_0\eps}{m}\int_0^t\int_{\R^2}\partial_s W[n] \cdot \mathrm{d}\mu^\eps_{EB}\\
&\leq C_T\eps \int_{\R^2}\dfrac{1}{2}\mathrm{d}\left(\dfrac{\epsilon_0}{m}tr(\mu^\eps_E) + \dfrac{\mu_0}{m} tr(\mu^\eps_B) \right)+ C_T  \int_0^t\int_{\R^2}\dfrac{1}{2}\mathrm{d}\left(\dfrac{\epsilon_0}{m}tr(\mu^\eps_E) + \dfrac{\mu_0}{m} tr(\mu^\eps_B) \right).
\end{align*}
Similarly, we have
\[
\int_0^t\int_{\R^2}D_x W[n] : \int_{\Sp^2}\sigma\otimes\sigma \mathrm{d}m^\eps \leq C_T \int_0^t\int_{\R^2}\int_{\Sp^2}\dfrac{1}{2}\mathrm{d}m^\eps.
\]
Therefore, we obtain
\begin{align*}
\int_0^t\int_{\R^2} W[n]\cdot R^\eps\mathrm{d}x_\perp\mathrm{d}s &\leq C_T  \eps\int_{\R^2}\dfrac{1}{2}\mathrm{d}\left(\dfrac{\epsilon_0}{m}tr(\mu^\eps_E)+ \dfrac{\mu_0}{m} tr(\mu^\eps_B) \right)\\
&\quad +  C_T \left(\dfrac{1}{2}\int_0^t\int_{\R^2}\int_{\Sp^2}\mathrm{d}m^\eps +  \dfrac{1}{2}\int_0^t\int_{\R^2}\mathrm{d}\left(\dfrac{\epsilon_0}{m}tr(\mu^\eps_E) + \dfrac{\mu_0}{m} tr(\mu^\eps_B) \right) \right).
\end{align*}

We estimate now the last integral $\int_0^t\int_{\R^2}W[n]\cdot F^\eps\mathrm{d}x_\perp\mathrm{d}s$ in \eqref{BalModEnerDensBis} and $-\frac{1}{m}\int_0^t\int_{\R^2}(nW[n])\cdot(J^\eps\wedge(\eps B_1))\mathrm{d}x_\perp\mathrm{d}s$. We have for some value $C$ to be precised later on
\begin{align*}
& - \int_0^t\int_{\R^2} D_x W[n] : \int_{\R^3}{(\sigma \nabla_v f^\eps + f^\eps v)\otimes v}\mathrm{d}v\mathrm{d}x_\perp\mathrm{d}s\\
 &\leq C_T\left[\dfrac{1}{2\eps\tau C}\int_0^t\int_{\R^2}\int_{\R^3}{\dfrac{|\sigma \nabla_v f^\eps + f^\eps v |^2}{f^\eps}}\mathrm{d}v\mathrm{d}x_\perp\mathrm{d}s + \eps \tau C \int_0^t\int_{\R^2}\int_{\R^3}{f^\eps \dfrac{|v|^2}{2}}\mathrm{d}v\mathrm{d}x_\perp\mathrm{d}s \right].
\end{align*}
Since $\int_{\R^3}vf^\eps\mathrm{d}v = \int_{\R^3}{(\sigma \nabla_v f^\eps + f^\eps v )}\mathrm{d}v$, we have
\begin{align*}
&\int_{0}^{t}\int_{\R^2}{W[n(s)]\cdot (\eps \partial_s \int_{\R^3}v f^\eps\mathrm{d}v + \dfrac{1}{\tau}\int_{\R^3}v f^\eps\mathrm{d}v)}\mathrm{d}x_\perp\mathrm{d}s \\
&=  \eps \int_{\R^2}{W[n(t)]\cdot \int_{\R^3}vf^\eps(t)\mathrm{d}v}\mathrm{d}x_\perp - \eps \int_{\R^2}{W[n(0)]\cdot \int_{\R^3}vf^\eps(0,x)\mathrm{d}v}\mathrm{d}x_\perp\\
&\quad + \int_0^t\int_{\R^2}\int_{\R^3}{[\sigma \nabla_v f^\eps + f^\eps(s,x,v)v]\cdot \left[ \dfrac{W[n(s)]}{\tau} - \eps \partial_s W[n(s)] \right]}\mathrm{d}v\mathrm{d}x_\perp\mathrm{d}s\\
&\leq \sqrt{\eps} \int_{\R^2}\int_{\R^3}{(f^\eps(0,x,v) + f^\eps(t,x,v))\left(\eps\dfrac{|v|^2}{2}+ \dfrac{\| W[n]\|^2 _{L^\infty}}{2} \right)}\mathrm{d}v\mathrm{d}x_\perp\\
&\quad+ \left[\eps \| \partial_s W[n] \|_{L^\infty} + \dfrac{\|W[n]\|_{L^\infty}}{\tau}  \right]\int_0^t\int_{\R^2}\int_{\R^3}{\left\{ \dfrac{1}{2\eps C}\dfrac{|\sigma \nabla_v f^\eps + f^\eps v |^2}{f^\eps}+ \dfrac{\eps C}{2}f^\eps \right\}}\mathrm{d}v\mathrm{d}x_\perp\mathrm{d}s.
\end{align*}
Similarly, we also have
\begin{align*}
&-\frac{1}{m}\int_0^t\int_{\R^2}(nW[n])\cdot(J^\eps\wedge(\eps B_1))\mathrm{d}x_\perp\mathrm{d}s \\
&\leq \eps \| n\|_{L^\infty}\|W[n]\|_{L^\infty} \|B_1\|_{L^\infty} \int_0^t\int_{\R^2}\int_{\R^3}{\left\{ \dfrac{1}{2\eps C}\dfrac{|\sigma \nabla_v f^\eps + f^\eps v |^2}{f^\eps}+ \dfrac{\eps C}{2}f^\eps \right\}}\mathrm{d}v\mathrm{d}x_\perp\mathrm{d}s.
\end{align*}
Taking $C$ large enough, we obtain by Lemma \ref{KinEne} that
\begin{align*}
-\frac{1}{m}\int_0^t\int_{\R^2}(nW[n])\cdot(J^\eps\wedge(\eps B_1))\mathrm{d}x_\perp\mathrm{d}s + \int_0^t \int_{\R^2} W[n]\cdot F^\eps\mathrm{d}x_\perp \mathrm{d}s\\
 \leq \dfrac{1}{2\eps\tau}\int_0^t\int_{\R^2}\int_{\R^3} \dfrac{|\sigma\nabla_v f^\eps + v f^\eps|^2}{f^\eps}\mathrm{d}v\mathrm{d}x_{\perp}\mathrm{d}s + C_T\sqrt{\eps}. 
\end{align*}
Putting together all the above estimates, we finally obtain the  inequality \eqref{Gronwall_mod}.
\end{proof}

We are now ready to prove our main theorem.
\begin{proof}(of Theorem \ref{MainThm}).
Applying Gronwall's lemma to inequality \eqref{Gronwall_mod}, we deduce that, for some constant $C_T>0$, and for all $0\leq t\leq T$, $0< \eps \leq 1$, the following holds:
\begin{align*}
\calE[n^\eps(t)|n(t)] +&  \sigma\int_{\R^2}\int_{\R^3}{n^\eps(t) M h\left(\dfrac{f^\eps(t)}{n^\eps(t) M} \right)}\mathrm{d}v\mathrm{d}x_\perp + \dfrac{1}{2\eps\tau}\int_0^t \int_{\R^2}\int_{\R^3}{\dfrac{|\sigma \nabla_v f^\eps + v f^\eps|^2}{f^\eps}}\mathrm{d}v\mathrm{d}x_\perp\mathrm{d}s\\
&+ \dfrac{1}{2}\int_{\R^2}\int_{\Sp^2}\mathrm{d}m^\eps+
\dfrac{1}{2}\int_{\R^2} \mathrm{d}\left( \dfrac{\epsilon_0}{m}tr(\mu^\eps_E) + \dfrac{\mu_0}{m}tr(\mu^\eps_B)\right)\\
&\leq  \left[\calE[n^\eps(0)|n(0)] +  \sigma\int_{\R^2}\int_{\R^3}{n^\eps(0) M h\left(\dfrac{f^\eps(0)}{n^\eps(0) M} \right)}\mathrm{d}v\mathrm{d}x_\perp  + C_T \sqrt{\eps}\right]e^{C_T t}.
\end{align*}
The above inequality says that the particle density $f^\eps$ remains close to the Maxwellian with the same concentration, i.e., $n^\eps(t)M$, and $n^\eps(t)$ stays near $n(t)$, provided that analogous behaviour occur for the initial conditions.

We justify the convergence of $f^\eps$ toward $nM$ in $L^\infty(0,T;L^1(\R^2\times\R^3))$, the other convergences being obvious. We use the Csis\'ar -Kullback inequality in order to control the $L^1$ norm by the relative entropy, cf. \cite{Csi1967, Kul1967}
\[
\int_{\R^n}|g-g_0|\mathrm{d} x \leq 2 \max \left\{ \left( \int_{\R^n}g_0\mathrm{d} x \right)^{1/2}, \left( \int_{\R^n}g\mathrm{d} x\right)^{1/2} \right\}\left(\int_{\R^n}g_0  h\left(\dfrac{g}{g_0} \right)\mathrm{d} x \right)^{1/2},
\]
for any nonnegative integrable functions $g_0,g: \R^n \to \R$. Applying two times the  Csis\'ar -Kullback inequality, we obtain
\begin{align*}
&\int_{\R^2}\int_{\R^3}{|f^\eps(t,x_\perp,v) -n(t,x_\perp)M(v)|}\mathrm{d}v\mathrm{d}x_\perp \\ &\leq \int_{\R^2}\int_{\R^3}{|f^\eps(t,x_\perp,v)- n^\eps(t,x_\perp)M(v)|}\mathrm{d}v\mathrm{d}x_\perp + \int_{\R^2}{|n^\eps(t,x_\perp)-n(t,x_\perp)|}\mathrm{d}x_\perp\\
&\leq 2 \sqrt{M_0} \left(\int_{\R^2}\int_{\R^3}{ n^\eps(t)M(v) h\left(\dfrac{f^\eps(t)}{n^\eps(t)M} \right)}\mathrm{d}v\mathrm{d}x_\perp\right)^{1/2} \\&\quad + 2 \max\left\{ \sqrt{M_0},\sqrt{\|n_{0}\|_{L^1(\R^2)}} \right\} \left( \int_{\R^2}{n(t) h \left(\dfrac{n^\eps(t)}{n(t)}\right)}\mathrm{d}x_\perp  \right)^{1/2}  \to 0 ,\,\,\,\,\mathrm{as}\,\eps\searrow 0.
\end{align*}
\end{proof}

\section{Appendix: Global existence solution}
In this section, we prove the existence of global in time weak solution for the VMFP equations \eqref{equ:VMFP-Scale}-\eqref{equ:Initial} stated in Theorem \ref{exis_weak_sol}. In order to simplify the
proofs of existence of the solution, as we do not want any uniform
estimate with respect to $\eps$, we will take $\eps=1$ and omit all the subscripts. We assume that the physical parameter $\tau,q,m,\mu_0,\epsilon_0$ and $\sigma$ are normalized. Thus, we consider the following problem:
\begin{equation}
\label{equ:VMFP_noeps}
\partial_t f + v\cdot\nabla_x f + (E + v\wedge(B + \textbf{B}_{\text{ext}}))\cdot\nabla_v f = \Divv(\nabla_v f + vf),
\end{equation}
\begin{equation}
\label{equ:FaAM_noeps}
\partial_t E - \cur B = - J = - \int_{\R^3}vf \mathrm{d}v,\quad \partial_t B + \cur E =0,
\end{equation}
\begin{equation}
\label{equ:Gauss_noeps}
\Divx E = \rho = \int_{\R^3}f \mathrm{d}v,\quad \Divx B =0.
\end{equation}
Notice that, once the two equations in \eqref{equ:FaAM_noeps} are satisfied, the two equations in \eqref{equ:Gauss_noeps} follow from the local conserversation law of mass \eqref{equ:law_charge_meas}.\\
We fix initial data
\begin{equation}
\label{equ:init_data_noeps}
\begin{split}
f_0\geq 0,\quad f_0 \in L^1\cap L^2(\R^3\times\R^3),\\
\int_{\R^3}\int_{\R^3}(|v|^2+ |\ln f_0|)f_0 \mathrm{d}x\mathrm{d}v <+\infty,\quad E_0, B_0\in L^2(\R^3),\\
\Divx B_0 = 0\,\,\,\,\text{in}\,\,\,\,\mathcal{D}',\quad \Divx E_0  = \rho_0\,\,\,\,\text{in}\,\,\,\,\mathcal{D}'.
\end{split}
\end{equation}

\textbf{The regularized VMFP system}. We first regularize the initial conditions for the VMFP system, i.e., we consider families $f^\delta_0\in \mathcal{D}_+(\R^3\times\R^3)$ and $E^\delta_0,B^\delta_0\in\mathcal{D}(\R^3;\R^3)$ so that
\[
\int_{\R^3}\int_{\R^3}|f_0 - f_0^\delta|(1+|v|^2) + |f_0 -f_0^\delta|^2 \mathrm{d}v\mathrm{d}x \to 0,\,\,\delta\to 0,
\]
and
\[
\int_{\R^3} |E_0 - E^\delta_0|^2 + |B_0 - B^\delta_0|^2\mathrm{d}x\to 0,\,\,\delta\to 0.
\]
We then consider the following regularized system of equations, in which the current density $J$ and the Lorentz force are regularized using a suitable mollifier $K_\delta$, as introduced in Subsection \ref{Prio_est}, i.e.,
\begin{equation}
\label{equ:regu_Vlasov}
\partial_t f^\delta + v\cdot\nabla_x f^\delta + [K_\delta(E^\delta +v\wedge B^\delta) + v\wedge \textbf{B}_{\text{ext}}]\cdot\nabla_v f^\delta = \Divv(\nabla_v f^\delta + vf^\delta),
\end{equation}
\begin{equation}
\label{equ:regu_FaAm}
\partial_t E^\delta - \cur B^\delta = - K_\delta J^\delta,\quad \partial_t B^\delta + \cur E^\delta =0,
\end{equation}

\textbf{The global well-posedness of the regularized system \eqref{equ:regu_Vlasov}-\eqref{equ:regu_FaAm}.}
\begin{pro}
\label{regu_solu}
Let $T>0$ and $\delta>0$ be fixed. Let $\textbf{B}_{\text{ext}}\in L^\infty(\R^3)$. Let
$f^\delta_0\in \mathcal{D}_+(\R^3\times\R^3)$ and  $E^\delta_0,\,\,B^\delta_0\in \mathcal{D}(\R^3;\R^3)$ satisfy \eqref{equ:init_data_noeps}.
Then there exists a unique weak solution  $0\leq f^\delta\in L^\infty(0,T;L^1\cap L^\infty(\R^3\times\R^3))$ and $E^\delta, B^\delta \in C([0,T];H^{s}(\R^3))\cap C^1([0,T],H^{s-1}(\R^3)),\,\,s \geq 5$, satisfying:
\begin{align*}
\| f^\delta\|_{L^\infty(0,T;L^1(\R^3\times\R^3))} = \|f^\delta_0\|_{L^1(\R^3\times\R^3)},\quad \|f^\delta\|_{L^\infty(0,T;L^\infty(\R^3\times\R^3))} \leq e^{3T}\|f^\delta_0\|_{L^\infty(\R^3\times\R^3)},\\
\|f^\delta\|^2_{L^\infty(0,T;L^2(\R^3\times\R^3))} + \| \nabla_v f^\delta\|^2_{L^2(\R^3\times\R^3)} \leq e^{2T} \|f^\delta_0\|^2_{L^2(\R^3\times\R^3)},
\end{align*}
and the free energy estimate \eqref{energy_sm}. Moreover, the solution verifies the local conservation laws of mass \eqref{equ:law_mass} and momentum \eqref{equ:law_moment} in the sense of distributions.
\end{pro}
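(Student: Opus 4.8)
The plan is to construct the solution by a fixed-point argument that decouples the kinetic transport-diffusion equation \eqref{equ:regu_Vlasov} from the Maxwell system \eqref{equ:regu_FaAm}, exploiting crucially that the mollifier $K_\delta$ renders both the force felt by $f^\delta$ and the current feeding the Maxwell system smooth. Concretely, I would work on $X_T = C([0,T]; H^s(\R^3))^6$ and define a map $\Phi$ sending a pair of fields $(E,B)$ to $(\tilde E,\tilde B)$ as follows: first solve the linear Vlasov-Fokker-Planck equation \eqref{equ:regu_Vlasov} with the frozen mollified force $K_\delta(E + v\wedge B) + v\wedge\textbf{B}_{\text{ext}}$ to obtain $f$; then set $J = \int_{\R^3} v f\,\mathrm{d}v$ and solve \eqref{equ:regu_FaAm} with source $-K_\delta J$ to obtain $(\tilde E,\tilde B)$. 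A solution of the regularized system is exactly a fixed point of $\Phi$, and the stated moment and energy bounds are what will allow global continuation.

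For the first step I would establish well-posedness and the quantitative bounds for the linear kinetic Fokker-Planck equation with a given force $a(t,x,v)=K_\delta(E+v\wedge B)+v\wedge\textbf{B}_{\text{ext}}$, which is smooth with $x$-derivatives controlled, the only $v$-growth being the affine magnetic part. Since $\Divv a = 0$, integrating \eqref{equ:regu_Vlasov} over $(x,v)$ gives exact $L^1$ conservation of mass. Rewriting the diffusion as $\Delta_v f + v\cdot\nabla_v f + 3f$ and setting $f=e^{3t}g$ removes the zeroth-order term, so the maximum principle for the resulting degenerate parabolic equation (which preserves nonnegativity) yields $0\le f \le e^{3t}\|f_0\|_\infty$. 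Testing against $f$ and using $\Divv a = 0$ together with the Ornstein-Uhlenbeck structure gives the $L^2$ identity $\frac{\mathrm{d}}{\mathrm{d}t}\|f\|_{L^2}^2 + 2\|\nabla_v f\|_{L^2}^2 \le C\|f\|_{L^2}^2$, hence the stated $L^2$ and $\nabla_v f$ bounds by Gronwall. Existence itself I would obtain by a vanishing-viscosity regularization (adding $\eta\Delta_x$ to make the equation uniformly parabolic, solving by standard theory, and passing $\eta\to 0$ with the above a priori bounds and hypoelliptic compactness), uniqueness following from the same energy estimate applied to the difference of two solutions.

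The Maxwell system \eqref{equ:regu_FaAm} is linear, constant-coefficient and symmetric hyperbolic, so given $K_\delta J \in C([0,T];H^s)$ — which holds because convolution against the fixed mollifier bounds $\|K_\delta J\|_{H^s}$ by $C_\delta\|J\|_{L^1}$, and $\|J\|_{L^1}$ is controlled by mass and the second velocity moment (kinetic energy) via Cauchy-Schwarz — the standard $H^s$ energy estimate produces a unique $(\tilde E,\tilde B)\in C([0,T];H^s)\cap C^1([0,T];H^{s-1})$. I would then close the argument by showing $\Phi$ is a contraction on a ball of $X_T$ for $T$ small: the difference of two kinetic solutions satisfies a VFP equation forced by $K_\delta((E_1-E_2)+v\wedge(B_1-B_2))\cdot\nabla_v f_2$, and the $L^2$ estimate of the previous step (using boundedness of $\nabla_v f_2$) controls it by the field difference, whence the current and Maxwell-solution differences are controlled, with the mollifier again smoothing. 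This gives a local solution, and the global-in-time energy bounds then permit continuation to arbitrary $T$; the second-moment control used throughout comes from testing \eqref{equ:regu_Vlasov} against $|v|^2/2$ and combining with the Maxwell energy.

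Finally, since the constructed solution is smooth (the fields lie in $H^s$ with $s\ge 5$ and $f$ enjoys the regularity the hypoelliptic theory provides), the free-energy estimate \eqref{energy_sm} and the local conservation laws \eqref{equ:law_mass}, \eqref{equ:law_moment} follow by justified formal computations: multiplying \eqref{equ:regu_Vlasov} by $1+\ln f + |v|^2/2$ and integrating produces the dissipation $\int |\nabla_v f + vf|^2/f$ and an electric field-exchange term $\int E\cdot v f$ that is exactly compensated by the energy balance of \eqref{equ:regu_FaAm} tested against $E$ and $B$, the magnetic force contributing nothing since it is orthogonal to $v$; the conservation laws come from integrating \eqref{equ:regu_Vlasov} in $v$ against $1$ and $v$, handling the field terms through the Maxwell equations and the identity \eqref{equ:quad_field}. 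The main obstacle I anticipate is the interplay between the unbounded-in-$v$ magnetic drift and the moment estimates: one must keep the second velocity moment finite so that $J$ is well-defined and controlled, which relies on the confining Ornstein-Uhlenbeck structure of the Fokker-Planck operator and on tracking the $v$-growth carefully; the mollifier removes the genuine nonlinear coupling difficulty, but the degenerate hypoelliptic nature of the kinetic diffusion still makes the linear well-posedness and regularity the technical heart of the argument.
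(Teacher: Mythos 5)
Your proposal is correct in substance and follows the same overall architecture as the paper -- decouple the kinetic equation from Maxwell by freezing the mollified fields, solve the linear Vlasov--Fokker--Planck problem (mass conservation, the $e^{3t}$ maximum principle, the $L^2$/$\nabla_v$ energy identity), solve the symmetric hyperbolic Maxwell system in $H^s$, and continue globally via the energy balance obtained by testing against $|v|^2/2$ and pairing with the field energy -- but it differs in the mechanism used to produce the fixed point. The paper runs a Picard-type iteration $(f^\delta_l,E^\delta_l,B^\delta_l)$, propagates uniform-in-$l$ bounds on a short interval $[0,T^\star]$, and passes to the limit by \emph{compactness} (Aubin's lemma for the fields, weak-$\star$ limits and moment bounds for $f^\delta_l$), whereas you propose a genuine Banach contraction on a ball of $C([0,T];H^s)^6$. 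Your route buys uniqueness of the regularized solution as an automatic by-product (the paper asserts uniqueness but its compactness argument only yields existence along a subsequence, with uniqueness implicitly resting on the same difference estimate you would prove anyway), at the price of having to close the contraction estimate; the paper's route avoids difference estimates entirely but must invoke Aubin's lemma and identify the weak limits of the nonlinear terms. The one place where your sketch is too quick is the contraction step itself: the forcing $K_\delta\bigl((E_1-E_2)+v\wedge(B_1-B_2)\bigr)\cdot\nabla_v f_2$ grows linearly in $v$ and $\nabla_v f_2$ is only known in $L^2_{t,x,v}$, not in $L^\infty$, so the pairing with the difference $g=f_1-f_2$ does not close directly; you should integrate by parts in $v$ (using that the force is $v$-divergence free) to land on $|v|f_2\,\nabla_v g$, and then control $|v|f_2$ in $L^2$ by interpolating the $L^\infty$ bound against the second-moment bound. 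You correctly flag this $v$-growth as the technical heart of the matter, and with that repair the argument goes through; the remaining ingredients (vanishing viscosity for the linear hypoelliptic problem, the free-energy and conservation-law identities by justified formal computation) coincide with what the paper does by citing Degond and Bonilla--Carrillo--Soler.
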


The proof of Proposition \ref{regu_solu} will be devided into four steps. The first step is devoted to construct an iterative sequence for the regularized system. The second step establishes the local-time solution and the global-time solution is showed in the third step. Finally, in the fourth step, we provided the properties of the solution. 

\textbf{Step $1$: Construction of an iterative sequence.} We construct the solution of \eqref{equ:regu_Vlasov}-\eqref{equ:regu_FaAm} using the following iteration scheme. Set $f^\delta_0(t,x,v) = f^\delta_0(x,v)$, $(E^\delta_0(t,x),B^\delta_0(t,x))=(0,0)$. Then we define $(f^\delta_{l+1},E^\delta_{l+1},B^\delta_{l+1})$, for $l=0,1,...$, inductively as the solutions of the following linear equations:
\begin{equation}
\label{equ:lin_regu_Vlasov}
\partial_t f^\delta_{l+1} + v\cdot\nabla_x f^\delta_{l+1} + [K_\delta(E^\delta_l + v\wedge B^\delta_l)+ v\wedge \textbf{B}_{\text{ext}}]\cdot\nabla_v f^\delta_{l+1} = \Divv(\nabla_v f^\delta_{l+1}+vf^\delta_{l+1}),
\end{equation}
\begin{equation}
\label{equ:lin_regu_FaAm}
\partial_t E^\delta_{l+1} - \cur B^\delta_{l+1} = -K_\delta J^\delta_{l},\quad\quad \partial_tB^\delta_{l+1} +\cur E^\delta_{l+1} =0,
\end{equation}
\begin{equation}
\label{equ:lin_initial}
f^{\delta}_{l+1}(0,x,v) = f^\delta_0(x,v),\quad E^\delta_{l+1}(0,x ) = E^\delta_0(x),\quad B^\delta_{l+1}(0,x) = B^\delta_0(x).
\end{equation}

First, we solve Maxwell's equations \eqref{equ:lin_regu_FaAm} with a given right hand side, which form a linear symmetric hyperbolic system. Note that the Maxwell equations \eqref{equ:MaxwellEps} also have the structure of a symmetric hyperbolic  systems after multiplication by a symmetrizing matrix. Thanks to Theorem I in \cite{Kato}, the system admits a unique solution $(E^\delta_{l+1},B^\delta_{l+1})$ in $C([0,T],H^s(\R^3))\cap C^1([0,T],H^{s-1}(\R^3))$, for $s\geq 5$, provided that $K_\delta J^\delta_l\in L^1(0,T;H^s(\R^3))\cap C([0,T];H^{s-1}(\R^3))$, and the following  estimates hold:
\begin{equation}
\label{equ:esti_elecmag}
\sup_{t\in[0,T]}(\|E^\delta_{l+1}\|_{H^s} + \|B^\delta_{l+1}\|_{H^s}) \leq e^{CT}(\|E^\delta_0\|_{H^s} + \|B^\delta_0\|_{H^s} + T \sup_{t\in[0,T]}\|K_\delta J^\delta_l\|_{H^s}),
\end{equation}
\begin{equation}
\label{equ:time_esti_elecmag}
\sup_{t\in[0,T]}(\|\partial_t E^\delta_{l+1}\|_{H^{s-1}} + \|\partial_t B^\delta_{l+1}\|_{H^{s-1}}) \leq C (\sup_{t\in[0,T]}\|K_\delta J^\delta_{l}\|_{H^{s-1}} + \sup_{t\in[0,T]}(\|E^\delta_{l+1}\|_{H^s} + \|B^\delta_{l+1}\|_{H^s})),
\end{equation}
where $C>0$ is independent of $l$. Thus, by Sobolev embeddings, we have $(E^\delta_l, B^\delta_l)$ belong to $L^\infty((0,T)\times\R^3)$.

Once the electromagnetic field $(E^\delta_l,B^\delta_l)$ is known and belongs to $L^\infty(0,T;L^\infty(\Omega))$, it can be shown by arguments similar to those in \cite{Deg1986}, that there exists a unique non-negative solution $f^\delta_{l+1}\in L^2([0,T]\times\R^3_x,H^1(\R^3_v))$ to \eqref{equ:lin_regu_Vlasov}, satisfying the following properties:
\begin{align*}
f^\delta_{l+1}\in L^\infty(0,T;L^1\cap L^\infty(\R^3\times\R^3)),\\
\sup_{t\in[0,T]}\|f^\delta_{l+1}\|_{L^1(\R^3\times\R^3)} = \|f^\delta_0\|_{L^1(\R^3\times\R^3)},\quad
\sup_{t\in[0,T]}\|f^\delta_{l+1}(t)\|_{L^\infty(\R^3\times\R^3)} \leq e^{3T} \| f^\delta_0 \|_{L^\infty(\R^3\times\R^3)},\\
\sup_{t\in[0,T]}\int_{\R^3}\int_{\R^3}|v|^2 f^\delta_{l+1} \mathrm{d}x\mathrm{d}v \leq e^T \left(\int_{\R^3}\int_{\R^3} \dfrac{|v|^2}{2} f^\delta_{0}\mathrm{d}x\mathrm{d}v + CT\|E^\delta_l\|^2_{L^\infty}\right). 
\end{align*}

\textbf{Step $2$: Short-time solution to the regularized system.}
First, we prove that there exists a terminal time $T^\star$, sufficiently small, such that the iterative sequences remain uniformly
bounded in their respective function spaces with respect to $l$:
\begin{align*}
|v|^2 f^\delta_{l}\in L^\infty(0,T^\star; L^1(\R^3\times \R^3)),\\
E^\delta_l,\,\, B^\delta_l \in C([0,T^\star],H^s(\R^3))\cap C^1([0,T^\star],H^{s-1}(\R^3)).
\end{align*}
Indeed, we assume that the initial conditions satisfy 
\[\||v|^2 f^\delta_0\|_{L^1(\R^3\times\R^3)} \leq P,\quad \quad \|E^\delta_0\|_{H^s(\R^3)},\,\, \|B^\delta_0\|_{H^s(\R^3)}\leq Q/2, \]
for some real positive numbers $P,Q>0$. For the induction step, let us assume that there exists $T^\star>0$ such that  for all $t\in[0,T^\star]$
\[ \||v|^2 f^\delta_l\|_{L^1(\R^3\times\R^3)} \leq 2P,\quad \quad \|E^\delta_l\|_{H^s(\R^3)},\,\, \|B^\delta_l\|_{H^s(\R^3)}\leq 2Q.\] 
From the kinetic energy estimate obtained in Step $1$, we have, for any $0<T\leq T^\star$,	 that
$ \sup_{t\in[0,T]}\| |v|^2 f^\delta_{l+1}\|_{L^1(\R^3\times\R^3)} \leq e^T (P + CTQ^2)$.
Hence, there exists $0<T_1 < T$ such that 
\[ \sup_{t\in[0,T_1]} \||v|^2 f^\delta_{l+1}\|_{L^1(\R^3\times\R^3)}\leq 2P.\]
On the other hand, from the definition of $J^\delta_{l+1}$, we have
$ \sup_{t\in[0,T]} \| J^\delta_{l+1} \|_{L^1(\R^3)}\leq \|f^\delta_0\|_{L^1(\R^3\times\R^3)} + 2P$,
and combining with \eqref{equ:esti_elecmag}, we deduce that 
\begin{align*} 
\sup_{t\in[0,T]} (\|E^\delta_{l+1}\|_{H^s} + \|B^\delta_{l+1}\|_{H^s}) \leq e^{CT} (Q + C(\delta) T(\|f^\delta_0\|_{L^1(\R^3\times\R^3)} + 2P)).
\end{align*}
Hence, there exists $0< T_2 < T$ such that
\[ \sup_{t\in[0,T_2]}(\|E^\delta_{l+1}\|_{H^s} + \|B^\delta_{l+1}\|_{H^s}) \leq 2Q.\]
As a consequence, we get from \eqref{equ:time_esti_elecmag} that there exists a constant $C>0$ independently of $l$ such that 
\[ \sup_{t\in[0,T_2]}(\|\partial_t E^\delta_{l+1}\|_{H^s} + \|\partial_t B^\delta_{l+1}\|_{H^s}) \leq C.\]
Finally, by choosing $T^\star = \min(T_1,T_2)$, we finish the proof.

We are now ready to prove that the regularized system \eqref{equ:regu_Vlasov}-\eqref{equ:regu_FaAm} admits a local solution. To obtain this, it remains to show the compactness of the sequence of iterated solutions in the appropriate topology. 

Let $(f^\delta_l, E^\delta_l,B^\delta_l)$
be the sequence of solutions obtained from the iterated scheme \eqref{equ:lin_regu_Vlasov}-\eqref{equ:lin_initial} on $[0,T^\star]$. We will apply a result of compactness due to Aubin (see \cite{Simon}). Since $E^\delta_l,B^\delta_l\in C([0,T^\star];H^s(\R^3))$ and  $\partial_t E^\delta_l, \partial_t B^\delta_l\in C([0,T^\star];H^{s-1}(\R^3))$, uniformly in $l$, Aubin's lemma assure us that $E^\delta_l, B^\delta_l$ are relatively compact in $L^q(0,T^\star;H^{s-1}_{\text{loc}}(\R^3))$ for any $1\leq q<\infty$. On the other hand, since $f^\delta_l$ is uniformly bounded in $L^\infty(0,T^\star;L^\infty(\R^3\times\R^3))$ and using the velocity moment estimate from kinetic theory \cite{GolSaintMag1999}, we  obtain
\[\rho^\delta_l \in  L^\infty(0,T^\star;L^{5/3}(\R^3)),\quad J^\delta_l\in L^\infty(0,T^\star;L^{5/4}(\R^3))\]
uniformly in $l$. Hence, we have, up to the extraction of a subsequence:
\begin{align*}
f^\delta_l \rightharpoonup f^\delta \quad\text{weakly}-\star\quad\text{in}\quad L^\infty(0,T^\star;L^\infty\cap\mathcal{M}(\R^3\times\R^3)),\\
\rho^\delta_l \rightharpoonup \rho^\delta \quad\text{weakly}-\star\quad\text{in}\quad L^\infty(0,T^\star;L^{5/3}(\R^3)),\\
J^\delta_l \rightharpoonup J^\delta \quad\text{weakly}-\star\quad\text{in}\quad L^\infty(0,T^\star;L^{5/4}(\R^3)),\\
|v|^2 f^\delta_l \rightharpoonup F^\delta \quad\text{weakly}-\star\quad\text{in}\quad L^\infty(0,T^\star;\mathcal{M}(\R^3\times\R^3)),\\
E^\delta_l \rightharpoonup E^\delta \quad\text{strongly}\quad\text{in}\quad L^q(0,T^\star;L^2_{\text{loc}}(\R^3)),\\
B^\delta_l \rightharpoonup B^\delta \quad\text{strongly}\quad\text{in}\quad L^q(0,T^\star;L^2_{\text{loc}}(\R^3)).
\end{align*}
Using the boundedness of the kinetic energy, it is easy to show that  $\rho^\delta$, $J^\delta$ and $F^\delta$ are in fact $\int_{\R^3}f^\delta\mathrm{d}v$, $\int_{\R^3}v f^\delta\mathrm{d}v$ and $\int_{\R^3}|v|^2f^\delta\mathrm{d}v$, respectively.

Using the above properties, it is straightforward to pass to the limit in equation \eqref{equ:lin_regu_Vlasov} for $f^\delta_l$ and to show that $f^\delta$ verifies equation \eqref{equ:regu_Vlasov} with initial condition $f^\delta_0$. Also, $(E^\delta,B^\delta)$ is a solution of Maxwell's equations \eqref{equ:regu_FaAm}.

\textbf{Step 3: Global solution to the regularized system.}
Once we have a short time solution to the regularized system, we can use the energy argument to extend the solution to a global one.
\begin{lemma}
\label{regu_loc_ener}
Let $\delta>0$ be fixed. Then for the regularized system of equations \eqref{equ:regu_Vlasov}-\eqref{equ:regu_FaAm}, we have the following balance of energy identity:
\begin{equation}
\label{equ:regu_loc_ener}
\begin{split}
&\int_{\R^3}\int_{\R^3}\dfrac{|v|^2}{2}f^\delta\mathrm{d}x\mathrm{d}v +  \int_{\R^3}\left(\dfrac{|E^\delta|^2}{2} + \dfrac{|B^\delta|^2}{2}\right)\mathrm{d}x + \int_0^t \int_{\R^3}\int_{\R^3}|v|^2 f^\delta\mathrm{d}x\mathrm{d}v\mathrm{d}s\\
& = \int_{\R^3}\int_{\R^3}\dfrac{|v|^2}{2}f^\delta_0\mathrm{d}x\mathrm{d}v +  \int_{\R^3}\left(\dfrac{|E^\delta_0|^2}{2} + \dfrac{|B^\delta_0|^2}{2}\right)\mathrm{d}x + 3 \int_0^t\int_{\R^3}\int_{\R^3}f^\delta \mathrm{d}x\mathrm{d}v\mathrm{d}s,\,\,t\in[0,T^\star].
\end{split}
\end{equation} 
\end{lemma}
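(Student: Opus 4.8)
The plan is to obtain the identity \eqref{equ:regu_loc_ener} by testing the kinetic equation \eqref{equ:regu_Vlasov} against the kinetic-energy weight $\frac{|v|^2}{2}$, testing the Maxwell system \eqref{equ:regu_FaAm} against the fields $E^\delta$ and $B^\delta$, and then adding the two resulting balances so that the work of the electric field cancels. All integrations by parts below are licensed by the regularity established in Steps 1--2: $f^\delta\in L^\infty(0,T^\star;L^1\cap L^\infty(\R^3\times\R^3))$ with finite kinetic energy $|v|^2 f^\delta\in L^\infty(0,T^\star;L^1)$, together with $E^\delta,B^\delta\in C([0,T^\star];H^s)$, $s\geq 5$, which provides enough decay in $x$ and $v$ for the boundary terms to vanish.

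First I would multiply \eqref{equ:regu_Vlasov} by $\frac{|v|^2}{2}$ and integrate over $\R^3\times\R^3$. The time term yields $\frac{\mathrm{d}}{\mathrm{d}t}\int_{\R^3}\int_{\R^3}\frac{|v|^2}{2}f^\delta\,\mathrm{d}v\,\mathrm{d}x$, while the transport term $\int_{\R^3}\int_{\R^3}\frac{|v|^2}{2}\,v\cdot\nabla_x f^\delta$ vanishes after integrating by parts in $x$ (the weight is $x$-independent). For the force term, the key observation is that the regularized force $F^\delta:=K_\delta(E^\delta+v\wedge B^\delta)+v\wedge\textbf{B}_{\text{ext}}$ is divergence-free in $v$: indeed $\Divv(v\wedge b)=0$ for any fixed vector $b$, and $K_\delta$ acts only on the $x$-variable, so $K_\delta(v\wedge B^\delta)=v\wedge K_\delta B^\delta$. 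Integrating by parts in $v$ and using $v\cdot(v\wedge b)=0$, only the electric part survives, giving $-\int_{\R^3}\int_{\R^3}(v\cdot K_\delta E^\delta)f^\delta=-\int_{\R^3}K_\delta E^\delta\cdot J^\delta\,\mathrm{d}x$, where $J^\delta=\int_{\R^3}v f^\delta\,\mathrm{d}v$. Finally, the collision term $\int_{\R^3}\int_{\R^3}\frac{|v|^2}{2}\Divv(\nabla_v f^\delta+vf^\delta)$, integrated by parts in $v$ with $\nabla_v\!\left(\frac{|v|^2}{2}\right)=v$ and $\Divv v=3$, produces $3\int_{\R^3}\int_{\R^3}f^\delta-\int_{\R^3}\int_{\R^3}|v|^2 f^\delta$. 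Collecting these contributions yields the kinetic-energy balance.

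Next I would multiply the Amp\`ere law $\partial_t E^\delta-\cur B^\delta=-K_\delta J^\delta$ by $E^\delta$ and the Faraday law $\partial_t B^\delta+\cur E^\delta=0$ by $B^\delta$, integrate in $x$, and add. The cross terms cancel via $\Divx(B^\delta\wedge E^\delta)=E^\delta\cdot\cur B^\delta-B^\delta\cdot\cur E^\delta$ integrated over $\R^3$, leaving $\frac{1}{2}\frac{\mathrm{d}}{\mathrm{d}t}\int_{\R^3}(|E^\delta|^2+|B^\delta|^2)=-\int_{\R^3}E^\delta\cdot K_\delta J^\delta\,\mathrm{d}x$. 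Adding this to the kinetic balance, the two electric-work terms cancel exactly; this is where the self-adjointness of $K_\delta$ recorded in Subsection \ref{Prio_est} is used, giving $\int_{\R^3}K_\delta E^\delta\cdot J^\delta=\int_{\R^3}E^\delta\cdot K_\delta J^\delta$. One is then left with $\frac{\mathrm{d}}{\mathrm{d}t}\big[\int\int\frac{|v|^2}{2}f^\delta+\frac12\int(|E^\delta|^2+|B^\delta|^2)\big]+\int\int|v|^2 f^\delta=3\int\int f^\delta$, and integrating in time from $0$ to $t$ gives precisely \eqref{equ:regu_loc_ener}.

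The main obstacle is conceptual rather than computational: ensuring the exact cancellation of the electric-field work between the two balances. This hinges on two structural facts — the orthogonality $v\perp(v\wedge b)$, so that the magnetic part of the (mollified) Lorentz force does no work, and the self-adjointness $K_\delta^\ast=K_\delta$, so that the mollified current pairs symmetrically with the field. The rest of the argument is routine once the integrations by parts are justified by the function-space estimates of the regularized solution.
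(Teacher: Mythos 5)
Your proposal is correct and follows essentially the same route as the paper: a kinetic-energy balance from testing the Vlasov equation with $\frac{|v|^2}{2}$, an electromagnetic-energy balance from testing Amp\`ere's and Faraday's laws with $E^\delta$ and $B^\delta$, and cancellation of the two work terms via the self-adjointness of $K_\delta$. The only difference is cosmetic: the paper outsources the kinetic-energy computation to Lemma 3.7 of Bonilla--Carrillo--Soler, whereas you carry it out explicitly (correctly handling the divergence-free Lorentz force and the collision term).
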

\begin{proof}
We note that $f^\delta\in L^\infty(0,T^\star;L^1\cap L^\infty(\R^3\times\R^3))$ and $E^\delta \in L^\infty((0,T^\star)\times\R^3)$. Using the same arguments as in \cite{BoniCarSoler1997} Lemma $3.7$, we have the balance of kinetic energy:
\begin{equation}
\label{equ:regu_loc_kin}
\begin{split}
 &\int_{\R^3}\int_{\R^3}\dfrac{|v|^2}{2}f^\delta\mathrm{d}v\mathrm{d}x + \int_0^t \int_{\R^3}\int_{\R^3}|v|^2 f^\delta\mathrm{d}v\mathrm{d}x\mathrm{d}s \\
&= \int_{\R^3}\int_{\R^3}\dfrac{|v|^2}{2}f^\delta_0 \mathrm{d}v\mathrm{d}x + \int_0^t \left< J^\delta,K_\delta E^\delta \right>\mathrm{d}s +  3 \int_0^t\int_{\R^3}\int_{\R^3}f^\delta \mathrm{d}v\mathrm{d}x\mathrm{d}s.
\end{split}
\end{equation}
Multiplying the Maxwell equation \eqref{equ:regu_FaAm} by $E^\delta$ and $B^\delta$, respectively, and integrating, we get the balance of electromagnetic energy:
\begin{equation}
\label{equ:regu_loc_elecmag}
 \int_{\R^3} \left( \dfrac{|E^\delta|^2}{2} + \dfrac{|B^\delta|^2}{2} \right)\mathrm{d}x =  \int_{\R^3} \left( \dfrac{|E^\delta|^2}{2} + \dfrac{|B^\delta|^2}{2} \right)\mathrm{d}x - \int_0^t \left<K^\delta J^\delta, E^\delta\right>\mathrm{d}s.
 \end{equation}
Since $K_\delta$ is self-adjoint, adding \eqref{equ:regu_loc_kin} and \eqref{equ:regu_loc_elecmag} together gives \eqref{equ:regu_loc_ener}.
\end{proof}

We now show that our local solution does not explode at any arbitrary $T>0$ in order to extend the solution from $[0,T^\star]$ to $[0,\infty)$. In particular, \eqref{equ:regu_loc_ener} yields an estimate for 
\[ \sup_{\delta >0}\int_{\R^3}\int_{\R^3} |v|^2 f^\delta(t) \mathrm{d}v\mathrm{d}x \leq C,\quad\text{for all}\quad t\in[0,T). \]
Hence, we get $\sup_{\delta>0}\|J^\delta\|_{L^1(\R^3)}\leq C$ for all $t\in [0,T)$ and $K_\delta J^\delta$ remains bounded in $C([0,T);H^s(\R^3))$. We can now use Theorem I in \cite{Kato} to get that $E^\delta, B^\delta$ remain bounded in $C([0,T);H^s(\R^3))$. Applying the argument of Degond in \cite{Deg1986}, we obtain that the unique solution $f^\delta$ is bounded in $L^\infty(0,T;L^1\cap L^\infty(\R^3\times\R^3))$. Our solution can therefore be extended by continuity to a global solution to the regularized system.

\textbf{Step 4: Properties of the solution.} The rest of Proposition \ref{regu_solu} is devoted to study the properties of the $(f^\delta,E^\delta, B^\delta)$ sequence of solutions. We note that the solution satisfies:
\begin{align*}
f^\delta\in L^\infty(0,T;L^1\cap L^\infty),\quad |v|^2 f^\delta \in L^\infty(0,T;L^1),\\
E^\delta,\,\, B^\delta \in L^\infty(0,T;L^2\cap L^\infty).
\end{align*}
Then, using the same arguments as in \cite{BoniCarSoler1997}, we obtain:
\begin{enumerate}
\item Balance of mass and $L^2$ identities:
\[  \int_{\R^3}\int_{\R^3} f^\delta \mathrm{d}v\mathrm{d}x = \int_{\R^3}\int_{\R^3} f^\delta_0\mathrm{d}v\mathrm{d}x.\]
\[
 \|f^\delta\|^2_{L^2(\R^3\times\R^3)} + \|\nabla_v f^\delta\|^2_{L^2((0,T)\times\R^3\times\R^3)}=  \|f^\delta_0\|^2_{L^2(\R^3\times\R^3)} + 3     \int_0^t \|f^\delta\|^2_{L^2(\R^3\times\R^3)}\mathrm{d}s.
\]
\item Balance of energy \eqref{equ:regu_loc_ener} which can be rewritten as
\begin{align*}
&\int_{\R^3}\int_{\R^3}\dfrac{|v|^2}{2}f^\delta\mathrm{d}x\mathrm{d}v +  \int_{\R^3}\left(\dfrac{|E^\delta|^2}{2} + \dfrac{|B^\delta|^2}{2}\right)\mathrm{d}x \\
&= \int_{\R^3}\int_{\R^3}\dfrac{|v|^2}{2}f^\delta_0\mathrm{d}x\mathrm{d}v +  \int_{\R^3}\left(\dfrac{|E^\delta_0|^2}{2} + \dfrac{|B^\delta_0|^2}{2}\right)\mathrm{d}x + \int_0^t\int_{\R^3}\int_{\R^3} (\nabla_v f^\delta + v f^\delta)\cdot v \mathrm{d}v\mathrm{d}x\mathrm{d}s,
\end{align*}
and entropy identities:
\[
\int_{\R^3}\int_{\R^3} f^\delta \ln f^\delta \mathrm{d}v\mathrm{d}x  = \int_{\R^3}\int_{\R^3} f^\delta_0 \ln f^\delta_0 \mathrm{d}v\mathrm{d}x + \int_0^t\int_{\R^3}\int_{\R^3} (\nabla_v f^\delta + v f^\delta)\cdot\dfrac{\nabla_v f^\delta}{f^\delta} \mathrm{d}v\mathrm{d}x\mathrm{d}s.
\]
\item The local conservation laws in the sense of distributions.
\end{enumerate}

\begin{proof}(of Theorem \ref{exis_weak_sol})

\textbf{Compactness.} In this part, we complete the proof of the existence of a solution to Theorem \ref{exis_weak_sol} by passing to the limit $\delta\to 0$. We begin by recalling the following result due to Bouchut–Dolbeault \cite{BouDol1995}.
\begin{lemma}
\label{Compact_lemma}
Denote by F and H two bounded subsets of  $L^p(\Omega\times\R^3)$ and $L^q(0,T;L^p(\Omega\times\R^3))$, respectively, with $1\leq p<\infty$ and $1<q\leq \infty$. Consider the solution $g\in C([0,T];L^p(\Omega\times\R^3))$ of
\[
\partial_t g + v\cdot \nabla_x g - \Divv(vg) - \Delta_v g = h,\,\,(t,x,v)\in(0,T)\times\Omega\times\R^3, \quad\quad g(0,x,v)= g_0(x,v),
\]
with initial data $g_0\in F$ and $h\in H$. Then for any $\tau>0$ and $\omega$ bounded open subset of $\Omega\times\R^3$, $g$ is compact in $C([\tau,T],L^p(\omega))$.
\end{lemma}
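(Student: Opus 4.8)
The plan is to exploit the hypoelliptic, Kolmogorov–Fokker–Planck structure of the operator $\mathcal{L}g := \partial_t g + v\cdot\nabla_x g - \Divv(vg) - \Delta_v g$. The velocity Laplacian $-\Delta_v$ regularizes in $v$, while the commutator identity $[\partial_{v_i},\, v\cdot\nabla_x] = \partial_{x_i}$ propagates this gain into the spatial variable; the combined effect is that solutions acquire a fractional amount of Sobolev regularity jointly in $(t,x,v)$, which is what ultimately yields compactness. Because the full velocity diffusion is present, no velocity-averaging lemma is needed here: the smoothing in $v$ comes directly from the equation.

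First I would localize. Fixing a cutoff $\chi = \chi(x,v)$ supported in a neighborhood of $\bar\omega$ and a time cutoff vanishing near $t=0$, the product $\tilde g := \chi g$ solves $\mathcal{L}\tilde g = \tilde h$, where $\tilde h$ gathers $\chi h$ together with commutator contributions such as $g\,(v\cdot\nabla_x\chi)$ and first-order terms in $\nabla_v\chi$; by the assumed bounds on $F$ and $H$, these remain bounded in $L^q(0,T;L^p)$. This reduces the problem to compactly supported data. Next comes the velocity-regularity estimate: multiplying the (renormalized) equation by $|g|^{p-2}g$ and integrating, the diffusion term produces a uniform $L^2$ bound on $\nabla_v(|g|^{p/2})$ over $(0,T)\times\Omega\times\R^3$ (for $p=2$ simply $\nabla_v g\in L^2$). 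This furnishes the equicontinuity in the $v$ variable.

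The technical heart is the hypoelliptic gain of regularity in $(t,x)$. Passing to the partial Fourier transform in $(t,x)$, or equivalently using the explicit Gaussian fundamental solution of the Kolmogorov operator together with Duhamel's formula, one upgrades the $L^2$ control of $g$ and $\nabla_v g$ into a bound on $g$ in $H^s_{\mathrm{loc}}$ jointly in $(t,x)$ for some $s\in(0,1)$ (the characteristic gain being of order $1/3$ in $x$). This is precisely where the restriction $t\geq\tau>0$ enters: the time-regularity estimate degenerates as $t\to 0$ because of the initial layer, so one must stay away from $t=0$.

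Finally, I would combine the fractional $(t,x)$-regularity with the $H^1$-type $v$-regularity and invoke Rellich–Kondrachov to obtain compactness of $g$ in $L^p_{\mathrm{loc}}((\tau,T)\times\Omega\times\R^3)$; to promote this to compactness in $C([\tau,T];L^p(\omega))$, I would bound $\partial_t g$ in a negative-order space directly from the equation and apply an Aubin–Lions–Simon argument. The main obstacle is unquestionably the hypoelliptic estimate of the third step: quantifying exactly how much regularity in $x$ and $t$ is generated by the interplay of the degenerate transport $v\cdot\nabla_x$ and the velocity diffusion $-\Delta_v$, which requires either sharp Gaussian kernel bounds with their parabolic scaling or a careful resolvent/Fourier analysis of the transport–diffusion operator.
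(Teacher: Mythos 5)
The paper does not prove this lemma at all: it is recalled verbatim from Bouchut--Dolbeault \cite{BouDol1995}, whose argument rests on the explicit Gaussian fundamental solution of the Kolmogorov--Fokker--Planck operator. Writing $g(t)=S(t)g_0+\int_0^t S(t-s)h(s)\,\mathrm{d}s$ by Duhamel, the kernel of $S(t)$ for $t\geq\tau>0$ is smooth with Gaussian decay, so $S(t)$ maps bounded sets of $L^p$ into bounded sets of $W^{1,p}_{\mathrm{loc}}$, hence by Rellich into relatively compact sets of $L^p(\omega)$, uniformly and continuously in $t\in[\tau,T]$; the hypothesis $q>1$ is exactly what makes the near-diagonal piece $\int_{t-\eta}^{t}S(t-s)h(s)\,\mathrm{d}s$ small in $L^p$ uniformly as $\eta\to 0$. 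This representation-formula route works for every $p\in[1,\infty)$ and lands directly in $C([\tau,T];L^p(\omega))$.

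Your route (localization, $L^p$ energy estimate, hypoelliptic transfer of regularity from $v$ to $(t,x)$, Aubin--Lions--Simon) is a legitimate alternative in spirit, but as written it has two genuine gaps. First, the dissipation step fails at the endpoint $p=1$, which the lemma explicitly allows and which is precisely the case invoked in the paper (compactness of $(f^\delta)_{\delta>0}$ in $L^1_{\mathrm{loc}}$ from a right-hand side bounded only in $L^1$). Testing against $|g|^{p-2}g$ produces the dissipation $(p-1)\int|g|^{p-2}|\nabla_v g|^2$, whose prefactor vanishes at $p=1$; there is then no control of $\nabla_v(|g|^{p/2})$ and the entire chain (velocity regularity, hypoelliptic gain in $x$, Rellich) collapses. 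Second, even for $p=2$ the hypoelliptic estimates you invoke (Bouchut-type $D_x^{1/3}$ gains) are integrated in time: they yield compactness in $L^p_{\mathrm{loc}}((\tau,T)\times\omega)$ but not the uniform-in-time spatial regularity needed to make $\{g(t)\}$ relatively compact in $L^p(\omega)$ for each fixed $t$, which is a prerequisite of the Arzel\`a--Ascoli/Simon criterion you appeal to in order to conclude in $C([\tau,T];L^p(\omega))$; bounding $\partial_t g$ in a negative space gives equicontinuity in time but not this pointwise compactness. Both defects are repaired at once by the explicit-kernel argument, which is why the cited proof takes that path.
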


We write the Vlasov equation \eqref{equ:regu_Vlasov} in the form
\begin{equation}
\label{equ:rew_Vlasov}
\partial_t f^\delta + v\cdot \nabla_x f^\delta -\Divv(vf^\delta - (v\wedge \textbf{B}_{\text{ext}})f^\delta) - \Delta_v f^\delta = -K_\delta(E^\delta + v\wedge B^\delta)\cdot\nabla_v f^\delta.
\end{equation}
Notice that from Step $4$, we have $f^\delta\in L^\infty(0,T;L^2(\R^3\times\R^3))$,  $\nabla_v f^\delta\in L^2((0,T)\times\R^3\times\R^3)$ are bounded, and $E^\delta,\,\,B^\delta\in L^\infty(0,T;L^2(\R^3))$ are also bounded, uniformly in $\delta>0$. Hence the right-hand side of \eqref{equ:rew_Vlasov} is clearly bounded in $L^1((0,T)\times\R^3\times B_R)$ for all $T>0$ and $R>0$. Lemma \ref{Compact_lemma} then implies that the sequence distribution $(f^\delta)_{\delta>0}$ is compact in $L^1_{\text{loc}}$. Therefore, up to extraction of a subsequence (still denote $\delta$), we have
\begin{align*}
& f^\delta\to f \quad\text{strongly in}\quad L^1_{\text{loc}}(\R_+\times\R^3\times\R^3),\\
& f^\delta\rightharpoonup f\quad\text{weakly in}\quad L^\infty(0,T;L^2(\R^3\times\R^3)),\\
& E^\delta \rightharpoonup E \quad\text{weakly in}\quad L^\infty(0,T;L^2(\R^3)),\\
& B^\delta \rightharpoonup B \quad\text{weakly in}\quad L^\infty(0,T;L^2(\R^3)).
\end{align*}
As a consequence, we have for any $\psi\in \mathcal{D}(\R^3)$ and $R>0$ that
\[\int_{\R^3} f^\delta \psi\mathrm{d}v,\quad \int_{\R^3}f^\delta v\psi\mathrm{d}v \to  \int_{\R^3} f \psi \mathrm{d}v,\quad \int_{\R^3} f v \psi\mathrm{d}v\quad\text{in}\quad L^1((0,T)\times  B_R).\]
On the other hand, using the uniform bound of $(f^\delta)_{\delta>0}$ in $L^\infty(0,T;L^2(\R^3\times\R^3))$ and the Dunford-Pettis theorem yields easily (see \cite{DipLion}):
\[
\left| \int_{\R^3} f^\delta \psi \mathrm{d}v \right|^2,\quad \left| \int_{\R^3}f^\delta v \psi\mathrm{d}v \right|^2 \quad\text{are weakly compact in} \quad L^1((0,T)\times B_R).
\]
Hence, using the Vitali convergence theorem, we deduce that
\begin{equation}
\label{equ:strong_L2}
\int_{\R^3} f^\delta \psi\mathrm{d}v,\quad \int_{\R^3}f^\delta v\psi\mathrm{d}v \to \int_{\R^3} f \psi\mathrm{d}v,\quad \int_{\R^3}f v\psi\mathrm{d}v\quad\text{in}\quad L^2((0,T)\times B_R).
\end{equation}

\textbf{Passing to the limit.}
It remains to show that $(f,E,B)$ is a weak solution of \eqref{equ:VMFP_noeps}-\eqref{equ:init_data_noeps}. Thanks to the convergences obtained above, it suffices to show the convergence towards $0$ for any function test $\Psi(t,x,v) = \varphi(t,x)\psi(v)\in \mathcal{D}([0,T)\times\R^3\times\R^3)$ of the non-linear contributions:
\begin{align*}
 \int_0^T \int_{\R^3}\varphi(t,x) K_\delta E^\delta \left(\int_{\R^3} f^\delta \psi\mathrm{d}v\right)\mathrm{d}x\mathrm{d}t\to \int_0^T\int_{\R^3}\varphi(t,x)E\left(\int_{\R^3}  f\psi\mathrm{d}v\right)\mathrm{d}x\mathrm{d}t =0,\\
\int_0^T \int_{\R^3}\varphi(t,x) K_\delta B^\delta \left(\int_{\R^3} f^\delta v\psi\mathrm{d}v\right)\mathrm{d}x\mathrm{d}t\to \int_0^T\int_{\R^3}\varphi(t,x)B\left(\int_{\R^3}  f v\psi\mathrm{d}v\right)\mathrm{d}x\mathrm{d}t =0.
\end{align*}
This follows from \eqref{equ:strong_L2} and $\varphi K_\delta E^\delta ,\,\,\varphi K_\delta B^\delta$ converge weakly to $E\varphi,\,\ B\varphi$, respectively, in $L^2((0,T)\times B_R)$.
\end{proof} 



\end{document}